\newcommand{\E}{{\mathbb{E}}}
\newcommand{\R}{{\mathbb{R}}}
\newcommand{\D}{{\mathbb{D}}}
\newcommand{\N}{{\mathbb N}}
\newcommand{\C}{{\mathbb C}}
\newcommand{\ve}{\varepsilon}
\newcommand{\vp}{\varphi}
\newcommand{\lmd}{{\lambda }}
\newcommand{\const}{\mathop{\rm const}}
\newcommand{\Supp}{\mathop{\rm Supp}}
\theoremstyle{plain}
\newtheorem{thm}{Theorem}
\newtheorem{lem}{Lemma}
\newtheorem{remark}{Remark}
\newtheorem{corollary}{Corollary}
\newtheorem{defn}{Definition}
\newtheorem{prop}{Proposition}
\begin{document}
\begin{center}
\textbf{Refinements of asymptotics at zero of Brownian self-intersection local times.}
\end{center}

\begin{center}
   \textbf{A. A. Dorogovtsev} \\
   Institute of Mathematics, National Academy of Sciences of Ukraine, Ukraine \\
E-mail address : \textit{andrey.dorogovtsev@gmail.com }

\end{center}

\begin{center}
\textbf{Naoufel Salhi} \\
IPEIN, University of Carthage, Tunisia \\
E-mail address : \textit{ salhi.naoufel@gmail.com }

\end{center}

\begin{center}
\textbf{Abstract }
\end{center}

In the article we establish some estimates related to the Gaussian densities and to Hermite polynomials in order to obtain an almost sure estimate for each term of the It\^{o}-Wiener expansion of the self-intersection local times of the Brownian motion. In dimension $d\geqslant 4$ the self-intersection local times of the Brownian motion can be considered as a family of measures on the classical Wiener space. We provide some asymptotics relative to these measures. Finally, we try to estimate the quadratic Wasserstein distance between these measures and the Wiener measure. 
\begin{flushleft}
\textit{2010 Mathematics Subject Classification } 60G15, 60H05, 60H40, 60J55 \\

\textit{Key words :} self-intersection local times, Brownian motion, It\^{o}-Wiener expansion, classical Wiener space, Sobolev spaces, generalized Wiener functions, capacity, Wasserstein distance, Talagrand inequality \\
\end{flushleft}

\section*{Introduction}

$\quad$Local times of self-intersections of Brownian motion is a rich subject that was investigated by many mathematicians. This paper is too small to cover all the previously obtained results. But we can remind the reader about some significant results -according to the point of view of the authors- that may be classified in the same topic with the results of this paper. A. Dvoretzky, P. Erdos and S. Kakutani \cite{DEK1,DEK2,DEK3,DEK4} were the first to discuss the existence of self-intersections of the Brownian motion. For more known facts in this field, we can refer the reader to \cite{AOmono} and the survey paper \cite{AOsurv} by A.A. Dorogovtsev and O.L. Izyumtseva.  \\

\noindent Among the reasons that explain the importance of this theory we find the relationship with the investigation of polymers. Polymers are long chains of small molecules (monomers) that are chemically connected. These huge molecules can be found in nature (proteins, DNA, RNA ...) and can be manufactured for industrial purposes. Brownian motion was proposed by S.F. Edwards \cite{Edwards} as a good mathematical model for self-avoiding polymers. But due to the "excluded volume effect" the study of local times of self-intersections was a necessity. In addition, local times of self-intersections are crucial tools for the study of the geometry of trajectories of the Brownian motion \cite{LeGall}. \\

\noindent Let $w(t)=\big( w_1(t),\cdots,w_d(t)\big),\; 0\leqslant t\leqslant 1,$ be a $d$-dimensional Brownian motion. The local times of self-intersections are formally denoted by : 
\begin{equation} \label{1}
\int _{\Delta_2} \delta_0 \big(w(t)-w(s) \big)  dsdt
\end{equation}
where $\Delta_2= \{ (s,t)\in [0,1]^2\, ;\, s<t\}. $ Rigorously, this object can be defined as the $L^p$- limit :
\begin{equation} \label{2} 
\mathrm{L^p}-\lim _{\ve \to 0}  \int _{\Delta_2} p^d_{\ve} \big(w(t)-w(s) \big)  dsdt ,
\end{equation}
for some $p\geqslant 1$. Here 
\begin{equation} \label{21} 
 p^d_{\ve}(z)=\frac{1}{ (2\pi \ve)^{d/2} } \exp \left( \frac{-\|z\|^2}{2\ve} \right),\; z\in \R^d,\, \ve >0
\end{equation}
\noindent It was proved that local times of self-intersections exist only for one-dimensional Brownian motion. However, the previous definitions can be extended as follows :
\begin{equation} \label{3}
\begin{split}
\rho_2(u)
&= \int _{\Delta_2} \delta_u \big(w(t)-w(s) \big)  dsdt\\
&=\mathrm{L^p}-\lim _{\ve \to 0}  \int _{\Delta_2} p^d_{\ve} \big(w(t)-w(s)-u \big)  dsdt,\, u\in \R^d .
\end{split}
\end{equation}
$\rho_2$ can be considered as the density of the occupation measure : 
\begin{equation} \label{4}
\mu _2 (A )=\int _{\Delta_2} \mathbf{1}_{A } \big(w(t)-w(s)\big)  dsdt ,\; A \in \mathcal{B}\big( \R^{d} \big)
\end{equation}
\noindent In such case, the following relation, called by "occupation formula", 
\begin{equation} \label{5}
\int _{\R^{d}} f(u) \rho _2(u)du =\int _{\Delta_2} f \big(w(t)-w(s) \big)  dsdt \quad  a.s.
\end{equation}
holds for every $f :\R^{d} \to \R$ positive (or bounded) and measurable. In \cite{Rosen}, J. Rosen proved that if $d=2$ or $d=3$ then the measure $\mu_2$ has a square integrable density which is continuous in $\R^d\setminus \{0\}$ and that $\mu_2$ does not have a density for $d\geqslant 4$. The basic tool there was the Fourier transform. \\ 

\noindent The behavior of the local time $\rho_2$ at the origin was investigated by some mathematicians. In the case of the planar Brownian motion, Varadhan \cite{Varadhan} and Le Gall \cite{LeGall} proved that the following limit 
\begin{equation} \label{6}
 \lim _{u\to 0} \, \rho_2(u)-\dfrac{1}{\pi}\log \left( \dfrac{1}{ \|u\| } \right)   
\end{equation}
exists almost surely and in $L^2$. In \cite{Streit2}, L. Streit et al. estimated the rate of convergence of the random variables 
$$ L_{\ve, c}=\int _{\Delta_2} p^2_{\ve} \big(w(t)-w(s)\big)  dsdt -\E \int _{\Delta_2} p^2_{\ve} \big(w(t)-w(s)\big)  dsdt $$
when $\ve \to 0$. Precisely, they proved that for any $\alpha <1$ there exists a constant $C_{\alpha} >0$ such that for all $\ve >0$ 
$$ \E \Big( L_{\ve, c}-L_c \Big) ^2 \leqslant C_{\alpha} \, \ve ^{\alpha} $$
where $\displaystyle L_c=\lim _{\ve \to 0} L_{\ve, c}$. In \cite{Streit}, an estimation of the speed of convergence of the gap-renormalized double self intersection local times of the planar Brownian motion was obtained. \\

\noindent In the $3$-dimensional case, M. Yor proved in \cite{Yor} that the following limit 
\begin{equation} \label{7}
\lim _{u\to 0}  \dfrac{1}{ \sqrt{ \log   \dfrac{1}{ \|u\| }     } } \Big( \rho_2(u)-\dfrac{1}{2\pi \|u\| }  \Big)
\end{equation}
exists in $L^2$. This clearly shows that in dimensions $2$ and $3$, the local time $\rho_2$ blows up at the origin in different scales. The expressions \eqref{6} and \eqref{7} are called renormalized self-intersections local times. A different form of renormalization related to the planar Brownian motion was proposed by E. B. Dynkin in \cite{Dynkin}. Denote by $\Delta_k$ the set $ \{ (t_1,\cdots,t_k)\in [0,1]^k\, ;\, t_1<t_2\cdots < t_k\}. $ Put 
\begin{equation}
T_{k,\ve}^{\vp}=\int_{\Delta_k} \prod_{i=1}^{k-1} q_{\ve}\big(w(t_{i+1})-w(t_i)\big)\, \vp(\vec{t})\, d\vec{t} 
\end{equation} 
where $\vp \in C(\Delta_k)$, which means it is a continuous real-valued function defined on $\Delta_k$, and $q_{\ve},\, \ve >0$ is defined as follows 
$$\begin{cases}
 q_{\ve}(x)=\dfrac{1}{\ve ^2} \, q\big( \frac{x}{\ve} \big)\\
 q \, \text{ is a probability density on } \R^2 \\
 \int_{ \R^2 } \big ( \log |x| \big ) ^{j}q(x)dx <\infty \quad \forall \quad j\in \N ^* \\
 \exists \; \beta >0 \text{ such that }  \int_{ \R^2 } e^{\beta |x|}q(x)dx <\infty  \,.
\end{cases}$$
\noindent Now define an operator : $ B_k^l : C(\Delta_k)\to C(\Delta_l)$ by :
$$ B_k^l(\vp)(s_1,\cdots,s_l)=\sum _{\sigma } \vp( s_{\sigma (1)},\cdots,s_{\sigma (k)})  $$
where the sum is taken over all surjective maps $\sigma : \{1,\cdots,k\} \to \{1,\cdots,l\}$ such that $\sigma(i)\leqslant \sigma(j)$ whenever $i<j$. Dynkin proved that for any $k\geqslant 2,\, \vp \in C(\Delta_k)$ and any $p\geqslant 1$ the following limit 
\begin{equation}
\mathrm{L^p}-\lim _{\ve \to 0} \sum_{l=1}^k \Big( \dfrac{\log \ve}{2\pi} \Big)^{k-l} T_{k,\ve}^{B_k^l(\vp)}  
\end{equation} 
exists.\\
\noindent The asymptotics \eqref{6} and \eqref{7} were obtained later by P. Imkeller, V. Perez-Abreu and J. Vives \cite{IPA} through a different approach. They used the white noise theory. Precisely, the core of their work was the It\^{o}-Wiener expansion (or chaotic expansion). It was established that for $d=2$ or $d=3$, the density $\rho_2(u), \, u\neq0$ has the following It\^{o}-Wiener expansion :
\begin{equation} \label{8}
\begin{split}
\rho_2(u)
&= \sum_{k=0}^{\infty} \sum_{n_1+..+n_d= k} \int_{\Delta_2 } \; \prod _{j=1}^d \frac{1}{\sqrt{ n_j !} } \, H_{n_j} \Big ( \frac{ w_j(t)-w_j(s)  }{\sqrt{t-s} } \Big ) \times \\
&\phantom{ = \sum_{k=0}^{\infty} \sum_{n_1+..+n_d= k} \int_{\Delta_2 } \; \prod _{j=1}^d \frac{1}{\sqrt{ n_j !} } \,  } \times \, H_{n_j}  \Big( \frac{ u_j  }{\sqrt{t-s} } \Big) \;  p^d_{t-s}(u)ds dt \,.
\end{split} 
\end{equation}
\noindent Here $H_n$ denotes the Hermite polynomial : 
\begin{equation} \label{80} 
H_{n}(x)=(-1)^{n} e^{\frac{x^{2}}{2}} \left( \frac{d}{dx}  \right) ^{(n)} e^{ \frac{-x^{2}}{2}},\, x\in \R
\end{equation}

\noindent The It\^{o}-Wiener expansion of a square integrable random variable which is measurable with respect to the Brownian motion (we denote the space of such kind of random variables by $L^2\big( \Omega, \sigma(w), \mathbb{P}\big)$) has the form  $\eta  = \sum _{k=0}^{\infty} I_k(f_k)$. The summands $I_k(f_k)$ are multiple It\^{o} stochastic integrals of the deterministic square integrable kernels $f_k$ with respect to $w_1,\cdots,w_d$. This series consists of orthogonal summands and converges in $L^2$ sense.\\

\noindent In the same paper, the case of dimension $d\geqslant 4$ was also considered. It was proved that the non-existent local time $\rho_2(u)$ can be replaced by a generalized Wiener function (or a distribution), i.e. an element of some Sobolev space with a negative index. More precisely, the expansion \eqref{8} does not converge in $L^2$ unless we divide the $k$-th term by a fixed positive power of $(k+1)$. 
\begin{thm}\cite{IPA}
Let $d\geqslant 4$ and $u\in \R^d,\,u\neq 0$. The limit 
\begin{equation} \label{81}  
\rho_2 (u)=\lim_{\varepsilon \to 0} \int_{\Delta_2  }  p^d_{\varepsilon}\big( w(t)-w(s)-u\big)dsdt  
\end{equation}
exists in every Sobolev space $\D^{2,\gamma}$ for $\gamma< (4-d)/2$, and is given by the distribution valued series :
\begin{equation} \label{9}
\begin{split}
\rho_2(u)
&= \sum_{k=0}^{\infty} \sum_{n_1+..+n_d= k} \int_{\Delta_2 } \; \prod _{j=1}^d \frac{1}{\sqrt{ n_j !} } \, H_{n_j} \Big ( \frac{ w_j(t)-w_j(s)  }{\sqrt{t-s} } \Big ) \times \\
&\phantom{ = \sum_{k=0}^{\infty} \sum_{n_1+..+n_d= k} \int_{\Delta_2 } \; \prod _{j=1}^d \frac{1}{\sqrt{ n_j !} } \,  } \times \, H_{n_j}  \Big( \frac{ u_j  }{\sqrt{t-s} } \Big) \;  p^d_{t-s}(u)ds dt \,.
\end{split} 
\end{equation}
\end{thm}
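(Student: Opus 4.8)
The theorem states that the limit of the regularized approximation $\int_{\Delta_2} p^d_\varepsilon(w(t)-w(s)-u)\,dsdt$ exists in the Sobolev space $\mathbb{D}^{2,\gamma}$ for $\gamma < (4-d)/2$, with an explicit chaos expansion. Let me think through how I would prove this.

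Let me recall the structure. We have the Gaussian kernel $p^d_\varepsilon$, and we're integrating over the simplex $\Delta_2$. The key tool mentioned is the Itô-Wiener (chaos) expansion.

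**Key idea: Compute the chaos expansion of the approximation explicitly.**

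The natural strategy: expand $p^d_\varepsilon(w(t)-w(s)-u)$ into its chaos decomposition, then pass to the limit $\varepsilon \to 0$ term by term, controlling the $\mathbb{D}^{2,\gamma}$ norm.

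Let me work out the chaos expansion. For fixed $s < t$, the increment $w(t)-w(s)$ is Gaussian with covariance $(t-s)I_d$ in $\mathbb{R}^d$, independent across the $d$ coordinates. So I need the chaos expansion of a function of a single Gaussian increment.

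The crucial computational fact: there's a generating-function / Mehler-type identity relating Gaussian densities to Hermite polynomials. Specifically, for a Gaussian increment $\xi = w_j(t)-w_j(s)$ with variance $\sigma^2 = t-s$, one has

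$$p^1_{\varepsilon+\sigma^2}(x - u_j) = \sum_{n} \frac{1}{n!}\,(\text{Hermite factors})\,\cdots$$

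The point is that convolving the Gaussian density $p^1_\varepsilon$ with the law of $\xi$ and expanding in Hermite polynomials produces exactly the Hermite structure in $\eqref{9}$, with the extra $\varepsilon$ showing up as $p^d_{\varepsilon + (t-s)}(u)$ instead of $p^d_{t-s}(u)$, and with a factor like $\left(\frac{t-s}{\varepsilon+t-s}\right)^{n_j/2}$ attached to the $n_j$-th Hermite term.

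**The limiting expansion.**

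As $\varepsilon \to 0$: the factor $\left(\frac{t-s}{\varepsilon+t-s}\right)^{n_j/2} \to 1$ and $p^d_{\varepsilon+(t-s)}(u) \to p^d_{t-s}(u)$, recovering exactly $\eqref{9}$. So the candidate limit is clear — the issue is *convergence* in the right topology.

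**The main obstacle: estimating the $\mathbb{D}^{2,\gamma}$ norms.**

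This is where the real work is. In $\mathbb{D}^{2,\gamma}$, the $k$-th chaos component gets weighted by $(k+1)^\gamma$ (with $\gamma$ negative when $d \geq 4$). So I need to estimate

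$$\sum_k (1+k)^\gamma \left\| (\text{$k$-th chaos term}) \right\|_{L^2}^2 < \infty$$

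for $\gamma < (4-d)/2$, and show the $\varepsilon$-approximations converge in this weighted norm.

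**Computing the $L^2$ norm of each chaos term.**

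By orthogonality of multiple integrals and the product structure across coordinates, the $L^2$ norm of the $k$-th chaos term reduces to a double integral over $\Delta_2 \times \Delta_2$ of a product of Hermite-weighted Gaussian kernels. Using the multiplicative property of the $d$ independent coordinates, I'd expect the $L^2$ norm squared of the term indexed by $(n_1,\ldots,n_d)$ to factor into a product over $j$ of one-dimensional integrals. Each such integral involves $H_{n_j}$ evaluated at $u_j/\sqrt{t-s}$ and the Gaussian density, integrated over the two time simplices.

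**The central estimate.**

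I anticipate the heart of the argument is a bound of the form: the total $L^2$-mass in chaos level $k$ behaves like $k^{(d-4)/2}$ up to logarithmic/constant factors, coming from the dimensional counting $\binom{k+d-1}{d-1} \sim k^{d-1}$ multi-indices, each contributing a factor that decays like $k^{-(d-1)/2}\cdot k^{-1/2}$ or similar (the decay coming from the time integration against $p^d_{t-s}(u)$ and the Hermite normalization $1/\sqrt{n_j!}$). This would give

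$$\|\text{$k$-th term}\|_{L^2}^2 \sim k^{(d-4)/2},$$

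so that $\sum_k (1+k)^\gamma k^{(d-4)/2} < \infty$ exactly when $\gamma + (d-4)/2 < -1$, i.e. $\gamma < (4-d)/2 - 1$. I'd need to check whether the threshold is $(4-d)/2$ or $(2-d)/2$ — the precise exponent is the delicate point and likely requires the sharp Hermite/Gaussian estimates referenced in the abstract.

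**Why this is the hard part.**

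The obstacle is getting the *sharp* growth rate of $\|\text{$k$-th term}\|_{L^2}^2$ in $k$. This requires:
(a) a uniform (in $\varepsilon$) bound so that dominated convergence applies in the weighted $\ell^2$ sum over chaos levels;
(b) sharp control of the Hermite polynomial values $H_n(u_j/\sqrt{t-s})$ — these blow up near $t-s \to 0$ (the singularity on the diagonal of $\Delta_2$), and the integrability is exactly what determines the admissible range of $\gamma$.

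**My plan in order.**

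First, fix $u \neq 0$ and $\varepsilon > 0$; compute the exact chaos expansion of the approximation using the Gaussian-Hermite generating identity, obtaining the $\varepsilon$-deformed version of $\eqref{9}$ with the explicit factors $(\frac{t-s}{\varepsilon+t-s})^{n_j/2}$ and $p^d_{\varepsilon+t-s}(u)$.

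Second, compute the squared $L^2$-norm of each chaos component via orthogonality, reducing to a product of one-dimensional time-integrals.

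Third, establish the sharp bound on these integrals as functions of $k$, using the Hermite estimates — this is the crux, handling the diagonal singularity at $t-s = 0$.

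Fourth, sum against the weight $(1+k)^\gamma$ and verify finiteness precisely when $\gamma < (4-d)/2$; then use the uniform-in-$\varepsilon$ bound with dominated convergence to conclude convergence in $\mathbb{D}^{2,\gamma}$ and identify the limit as $\eqref{9}$.
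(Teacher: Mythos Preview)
The paper does not prove this theorem at all: it is stated with the citation \cite{IPA} and no proof is given. It is quoted as a known result from Imkeller, Perez-Abreu and Vives, and the paper uses it as input (for instance in the proof of the capacity estimate, Theorem~5). So there is nothing to compare your attempt against in this paper.

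That said, your outline is the correct strategy and is essentially the argument of \cite{IPA}: expand $p^d_\varepsilon(w(t)-w(s)-u)$ in Hermite polynomials, obtain the $\varepsilon$-deformed version of \eqref{9} with $p^d_{\varepsilon+t-s}(u)$ and extra factors $\big(\tfrac{t-s}{\varepsilon+t-s}\big)^{n_j/2}$, then control the $\D^{2,\gamma}$ norm by estimating the $L^2$ norm of each chaos level.

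Your hesitation about the threshold (you get $\gamma<(2-d)/2$ from the rough count $\|\text{$k$-th term}\|_2^2\sim k^{(d-4)/2}$, one unit short of the stated $\gamma<(4-d)/2$) is exactly the place where a sharper ingredient is needed. The missing input is Szeg\H{o}'s pointwise Hermite bound (Proposition~\ref{Szego} in this paper): for $\alpha\in[1/4,1/2]$,
\[
\frac{|H_n(x)|}{\sqrt{n!}}\leqslant c\,(n\vee 1)^{-(8\alpha-1)/12}\,e^{\alpha x^2}.
\]
This extra decay $n^{-(8\alpha-1)/12}$ per coordinate, combined with the time-integral bound
\[
\int_{\Delta_2\times\Delta_2}\frac{\lambda([s_1,t_1]\cap[s_2,t_2])^k}{((t_1-s_1)(t_2-s_2))^{k/2}}\,ds_1dt_1ds_2dt_2\leqslant c_4\,(k\vee 1)^{-2},
\]
yields $\|\text{$k$-th term}\|_2^2\lesssim (k\vee 1)^{d(1-(8\alpha-1)/6)-3}$. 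Summability against $(k+1)^\gamma$ then requires $\gamma< 2-d\big(1-\tfrac{8\alpha-1}{6}\big)$, and letting $\alpha\uparrow 1/2$ gives the sharp range $\gamma<(4-d)/2$. These are precisely the four estimates the present paper quotes from \cite{IPA} in the proof of Theorem~5, so you can see the mechanism there. Without the Szeg\H{o} factor your exponent count falls one short, which is the gap you noticed.
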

\noindent The space $\D^{2,\gamma}$ is the completion of the following space :
$$ \left\lbrace   \eta=\sum_{k=0}^n I_k (f_k) \in L^2\big( \Omega, \sigma(w),\mathbb{P}\big),\, n\in \N\right\rbrace  $$
with respect to the norm : 
$$ \big \| \eta\big \| _{2,\gamma} ^2=\sum_{k=0}^{n} (k+1)^{\gamma } \, \E \, I_k(f_k)^2 \,.  $$

\noindent When $\alpha <0$, the elements of $\D^{2,\alpha}$ are called generalized Wiener functionals.\\
\noindent If $0<\alpha <\beta $ then
$$  \D^{2,\beta} \subset \D^{2,\alpha} \subset \D^{2,0}=L^2\big( \Omega, \sigma(w),\mathbb{P}\big) \subset \D^{2,-\alpha} \subset \D^{2,-\beta} . $$
\noindent $\D^{2,\pm\infty}$ are defined by 
$$ \D^{2,+\infty}=\cap_{\alpha >0} \D^{2,\alpha}\,,\; \D^{2,-\infty}=\cup _{\alpha >0} \D^{2,-\alpha}\,. $$
\noindent If $n\in \N^*$ then $\D^{2,n}$ coincides with the space of square random variables $\eta \in L^2\big( \Omega, \sigma(w),\mathbb{P}\big)$ which are $n$-times stochastically differentiable and such that $\eta, D\eta, \cdots, D^n\eta $ are square integrable \cite{Nualart, AAD}. Moreover, the norm $\big \| . \big \| _{2,n} $ is equivalent to the norm $\sqrt{\sum_{k=0}^n \big \| D^k\eta \| _{2}^2 }\; $ \cite{Nualart, Sugita}.\\

\noindent Our investigation is in some sens a continuation of \cite{IPA}. We are interested in the sample path behavior of $\rho_2(u)$ at the origin. We established an almost sure upper bound for each term of the chaotic expansion of $\rho_2(u)$. For dimensions $d\geqslant 4$ we relied on a result from \cite{Sugita} in order to deal with $\rho_2(u)$ as a measure on the space of trajectories (the Wiener space : $ W_0^d=\left\lbrace \omega :[0,1]\to \R^d,\, \text{continuous},\, \omega(0)=0\right\rbrace  $). The asymptotics in this case are expressed through potential theory. We succeeded to obtain some asymptotics for the capacity of the support of this measure.\\

\noindent The paper is organised as follows. In section 1 we start by establishing some estimates related to the Gaussian density $p_{t-s}^d$ and to Hermite polynomials in order to obtain an almost sure upper bound for each term of the It\^{o}-Wiener expansion of $\rho_2(u)$ in any dimension. In section 2 we first establish the fact that, when $d\geqslant 4$, the distribution $\rho_2(u)$ can be considered as a measure $\theta_u$ in the Wiener space. We then precise a specific closed set that contains the support of $\theta_u$. After proving some useful properties related to conditional expectation and second quantization operator we establish a formula that indicates how to integrate with respect to $\theta_u$. Finally, we find a lower bound for the capacity of the support of $\theta_u$. In section 3 we try to estimate the quadratic Wasserstein distance between the measure $\theta_u$ and the Wiener measure. We mainly apply the Talagrand inequality to the finite dimensional approximations of these measures.\\  

\section{Estimation of multiple Wiener integrals}
\noindent The main estimate established in this section is based on the following three technical estimates.\\ 
\begin{prop} \label{det.est.} Let $\alpha \in \R_+$ and $u\in \R^d,\, d\geqslant 2$. Then, when $u\to 0$, 
$$\int_{\Delta_2   } \dfrac{1}{(t-s)^{\alpha  }}   \; p^d_{t-s}( u) dsdt
\sim   
\begin{cases}
\dfrac{2^{\alpha  -1}\Gamma (\alpha  +d/2 -1)  }{ \pi ^{d/2} \|u\|^{2\alpha  +d-2}   } \; & \text{if } \alpha  >1- \dfrac{d}{2}\\
\dfrac{2^{\alpha  }}{ \pi ^{d/2}    }\log \Big( \dfrac{1}{ \|u\| } \Big) \; & \text{if } \alpha  = 1-\dfrac{d}{2}\,,

\end{cases}
$$
where $\Gamma$ denotes the Gamma function.
\end{prop}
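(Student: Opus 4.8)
The plan is to collapse the two–dimensional integral over $\Delta_2$ into a one–dimensional integral in the single variable $\tau=t-s$, and then read off the asymptotics by rescaling $\tau$ against $\|u\|^2$. Since the integrand depends on $(s,t)$ only through $\tau=t-s$, Fubini together with the substitution $\tau=t-s$ gives
$$\int_{\Delta_2}\frac{1}{(t-s)^{\alpha}}\,p^d_{t-s}(u)\,dsdt=\int_0^1(1-\tau)\,\frac{1}{\tau^{\alpha}}\,p^d_{\tau}(u)\,d\tau=\frac{1}{(2\pi)^{d/2}}\int_0^1(1-\tau)\,\tau^{-\alpha-d/2}\,e^{-\|u\|^2/(2\tau)}\,d\tau,$$
where the weight $(1-\tau)$ is simply the length $1-\tau$ of the slice $\{s:(s,s+\tau)\in\Delta_2\}$. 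Writing $r=\|u\|$, the whole statement is reduced to the behaviour as $r\to0$ of this scalar integral. It is worth noting that since $\alpha\geqslant0$ and $1-d/2\leqslant0$ for $d\geqslant2$, the generic branch $\alpha>1-d/2$ is automatic for $d\geqslant3$ and for $d=2,\ \alpha>0$, while the boundary case $\alpha=1-d/2$ occurs only at $d=2,\ \alpha=0$.

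For the generic case $\alpha>1-d/2$ I would extract the leading power of $r$ by the rescaling $\tau=r^2 v$, which turns the integral into
$$\frac{r^{2-2\alpha-d}}{(2\pi)^{d/2}}\int_0^{1/r^2}(1-r^2v)\,v^{-\alpha-d/2}\,e^{-1/(2v)}\,dv.$$
On the range of integration one has $0\leqslant 1-r^2v\leqslant1$, so the integrand is dominated by $v^{-\alpha-d/2}e^{-1/(2v)}$, which is integrable on $(0,\infty)$ precisely because $\alpha>1-d/2$: the factor $e^{-1/(2v)}$ kills the singularity at $v=0$, and at $v=\infty$ the decay is $v^{-\alpha-d/2}$ with exponent $<-1$. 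Dominated convergence then lets me replace $(1-r^2v)\1_{[0,1/r^2]}(v)$ by $1$, so the remaining integral converges to $\int_0^\infty v^{-\alpha-d/2}e^{-1/(2v)}\,dv$; the substitution $w=1/(2v)$ identifies this with $2^{\alpha+d/2-1}\Gamma(\alpha+d/2-1)$. Collecting the constant $2^{\alpha+d/2-1}/(2\pi)^{d/2}=2^{\alpha-1}/\pi^{d/2}$ reproduces exactly the first branch.

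The boundary case $\alpha=1-d/2$ is where the mechanism changes and needs the most care, since there $-\alpha-d/2=-1$ and the scalar integral becomes $\tfrac{1}{(2\pi)^{d/2}}\int_0^1(1-\tau)\tau^{-1}e^{-r^2/(2\tau)}\,d\tau$. I would split off the factor $(1-\tau)$: the contribution of $\int_0^1 e^{-r^2/(2\tau)}\,d\tau$ stays bounded (it tends to $1$) and is therefore negligible against a logarithm, while the genuinely singular piece $\int_0^1\tau^{-1}e^{-r^2/(2\tau)}\,d\tau$, after the change of variables $\tau\mapsto1/\tau$ followed by a linear rescaling, becomes the exponential integral $E_1(r^2/2)=\int_{r^2/2}^\infty y^{-1}e^{-y}\,dy$. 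Using the classical expansion $E_1(x)=-\gamma-\log x+O(x)$ as $x\to0$, the leading term is $-\log(r^2/2)\sim2\log(1/\|u\|)$, and since $\tfrac{2}{(2\pi)^{d/2}}=\tfrac{2^{1-d/2}}{\pi^{d/2}}=\tfrac{2^{\alpha}}{\pi^{d/2}}$ this yields the second branch.

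The main obstacle is the uniform control required to interchange the limit $r\to0$ with the integral in the generic case and to pin down the precise divergence rate in the boundary case; once the problem has been reduced to the scalar integral these are handled respectively by the domination above and by the standard $E_1$ asymptotics. The only genuinely new manipulations are the reduction to a single variable and the $\tau=r^2v$ rescaling; everything else is classical special-function asymptotics.
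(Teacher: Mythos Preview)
Your proof is correct and follows essentially the same route as the paper: both reduce to the one-variable integral $\frac{1}{(2\pi)^{d/2}}\int_0^1(1-x)\,x^{-\alpha-d/2}e^{-\|u\|^2/(2x)}\,dx$ and then rescale against $\|u\|^2$. The paper combines your two substitutions $\tau=r^2v$ and $w=1/(2v)$ into the single change $z=\|u\|^2/(2x)$ and then reads off the asymptotics of $\int_{\|u\|^2/2}^\infty z^{\alpha+d/2-2}\big(1-\|u\|^2/(2z)\big)e^{-z}\,dz$ (the $r=-1$ case being exactly your $E_1$), so the difference is purely cosmetic.
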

\begin{proof}
By a change of variables : $(x,y)=(t-s,s) \in [0,1]^2,\, x+y\leqslant 1$ : 
\begin{align*}
\int_{\Delta _2 } \dfrac{1}{(t-s)^{\alpha  }}   \; p^d_{t-s}( u) dsdt 
&= \dfrac{1}{(2\pi) ^{d/2} }\int_{\substack{(x,y)\in [0,1]^2\\ x+y \leqslant 1 }} \dfrac{1}{x^{\alpha  +d/2 } } \; \exp \left( \frac{-\|u\|^2}{2x} \right)dxdy \\ 
&=\dfrac{1}{(2\pi) ^{d/2} } \int_0^1 \dfrac{1-x}{x^{\alpha  +d/2} } \; \exp \left( \frac{-\|u\|^2}{2x} \right)dx \,, \;  z=\frac{\|u\|^2}{2x}\,, \\
&=\dfrac{ 2^{\alpha  -1} }{\pi ^{d/2} \|u\|^{2\alpha  +d-2} } \int _{\frac{\|u\|^2}{2}} ^{\infty}  z^{\alpha  +d/2-2}\Big( 1-\frac{\|u\|^2}{2z}\Big)  \, e^{-z} dz  \,.
\end{align*}
Using the following estimates, when $u\to 0$, 
$$\int _{\frac{\|u\|^2}{2}} ^{\infty}  z^{r } e^{-z} dz 
\sim   
\begin{cases}
 \Gamma (r +1)  \; & \text{if } r  >-1\\
2\log \Big( \dfrac{1}{ \|u\| } \Big) \; & \text{if } r = -1\\
-\dfrac{\|u\|^{2r+2} }{ 2^{r +1}(r +1) }\; & \text{if } r < -1\,,
\end{cases}
$$
we deduce the result.
\end{proof}
$\,$

\begin{prop} \label{wiener.est}
Let $W$ be the one-dimensional Brownian motion and $0\leqslant s <t \leqslant 1$. Then, almost surely,
\begin{equation} \label{115}
\Bigg | H_{n} \Big ( \frac{ W(t)-W(s)  }{\sqrt{t-s} } \Big )\Bigg | \leqslant \dfrac{n !\sqrt{e}  }{(t-s)^{\frac{n}{2}} }   \, \exp \Big( \Big|  W(t)-W(s)\Big| \Big) 
\end{equation}

\end{prop}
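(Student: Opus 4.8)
The plan is to first prove a purely deterministic pointwise bound on $H_n$ and then substitute the Brownian increment, using a free parameter to match the factor $(t-s)^{-n/2}$. The natural tool is the generating function of the (probabilists') Hermite polynomials defined in \eqref{80},
$$\sum_{n=0}^{\infty} \frac{\tau^n}{n!}\, H_n(x) = \exp\Big( \tau x - \frac{\tau^2}{2}\Big),$$
valid for every $x\in\R$ and $\tau\in\C$. Reading off the coefficient of $\tau^n$ by Cauchy's integral formula, I would write, for any radius $r>0$,
$$\frac{H_n(x)}{n!} = \frac{1}{2\pi i} \oint_{|\tau|=r} \frac{e^{\tau x - \tau^2/2}}{\tau^{n+1}}\, d\tau.$$

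Next I would estimate this integral on the circle $\tau = re^{i\theta}$. Since
$$\Re\Big(\tau x - \frac{\tau^2}{2}\Big) = r x\cos\theta - \frac{r^2}{2}\cos 2\theta \leqslant r|x| + \frac{r^2}{2},$$
the standard modulus bound for contour integrals yields the deterministic inequality
$$|H_n(x)| \leqslant \frac{n!}{r^n}\, \exp\Big( r|x| + \frac{r^2}{2}\Big), \qquad r>0.$$
This holds for every real $x$ and every $r>0$; the freedom in $r$ is precisely what will let me absorb the $(t-s)^{-n/2}$ scaling in the statement.

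Finally, I would specialize to $x = (W(t)-W(s))/\sqrt{t-s}$ and make the choice $r=\sqrt{t-s}$. With this choice $r|x| = |W(t)-W(s)|$ and $r^{-n}=(t-s)^{-n/2}$, so the bound becomes
$$\Bigg| H_n\Big( \frac{W(t)-W(s)}{\sqrt{t-s}}\Big)\Bigg| \leqslant \frac{n!}{(t-s)^{n/2}}\, \exp\Big( |W(t)-W(s)| + \frac{t-s}{2}\Big).$$
Since $0\leqslant t-s\leqslant 1$ gives $e^{(t-s)/2}\leqslant \sqrt{e}$, this is exactly \eqref{115}. The inequality is valid for every sample path on which $W(t)-W(s)$ is finite, hence almost surely.

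As for difficulty, every step is routine once the generating-function/Cauchy representation is set up; the only genuinely decisive point is recognizing that the radius $r$ should be tuned to $\sqrt{t-s}$, so that the linear term $r|x|$ collapses to the increment $|W(t)-W(s)|$ while $r^{-n}$ produces the required $(t-s)^{-n/2}$. One could instead optimize over $r$ to sharpen the constant, but that would destroy the exact matching to the increment, so I would keep $r=\sqrt{t-s}$ fixed and simply discard the harmless factor $e^{(t-s)/2}\leqslant\sqrt{e}$.
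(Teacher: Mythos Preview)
Your proof is correct and follows essentially the same route as the paper: generating function plus Cauchy's integral formula, with the contour radius (implicitly $\sqrt{t-s}$ in the paper's scaling) chosen so that the linear term becomes $|W(t)-W(s)|$. The only cosmetic difference is that the paper carries out a two-case maximization of $\Re(z(W(t)-W(s)) - z^2(t-s)/2)$ on the unit circle, whereas you use the cruder bound $-\cos 2\theta\leqslant 1$; both lead to the same final constant $\sqrt{e}$.
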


\begin{proof}

Using the generating function of Hermite polynomials we obtain the following power series expansion of the holomorphic function $\exp \Big( z \big(  W(t)-W(s)\big) -\frac{z ^2}{2} (t-s) \Big)$ :   
$$\displaystyle \exp \Big( z \big(  W(t)-W(s)\big) -\frac{z ^2}{2} (t-s) \Big)=\sum_{n=0}^{\infty} \dfrac{ (t-s)^{\frac{n}{2}} H_{n} \Big (  \frac{ W(t)-W(s)  }{\sqrt{t-s} } \Big )  }{n!} z^n,\; z\in \C.$$ 
Using the Cauchy integral formula we deduce : 
\begin{align*}
(t-s)^{\frac{n}{2}} H_{n} \Big (  \frac{ W(t)-W(s)  }{\sqrt{t-s} } \Big ) 
&=\dfrac{\partial^n }{\partial z^n}\exp \Big( z \big(  W(t)-W(s)\big) -\frac{z ^2}{2} (t-s) \Big)\Big| _{z=0} \\
&=\dfrac{n!}{2i\pi} \int _{\mathcal{C} } \dfrac{ \exp \Big( z \big(  W(t)-W(s)\big) -\frac{z ^2}{2} (t-s) \Big) }{z ^{n+1}} \,dz      \,.
\end{align*} 
where $ \mathcal{C} =\left\lbrace z\in \C \,;\, |z|=1 \right\rbrace\,.  $\\
It follows that : 
$$ \Bigg | (t-s)^{\frac{n}{2}} H_{n} \Big ( \frac{ W(t)-W(s)  }{\sqrt{t-s} } \Big )\Bigg | \leqslant \dfrac{n!}{2\pi} \int _{\mathcal{C} }   \Bigg | \exp \Big( z \big(  W(t)-W(s)\big) -\frac{z ^2}{2} (t-s) \Big) \Bigg | dz\,. $$
\noindent Letting $z =u+iv,\, u,v\in [-1,1],\, u^2+v^2=1$, we obtain  
\begin{align*}
  &\max _{z \in \mathcal{C} }\Big | \exp \Big( z \big(  W(t)-W(s)\big) -\frac{z ^2}{2} (t-s) \Big)\Big |\\
  &=\max _{u\in [-1,1] }\exp \Big( -u^2(t-s)+ u \big(  W(t)-W(s)\big) +\dfrac{t-s}{2} \Big) \\
  &= \begin{cases} 
        \exp \Big( \dfrac{\big(  W(t)-W(s)\big)^2}{4 (t-s) } +\frac{t-s}{2} \Big) &\text{ if }  \Big | W(t)-W(s)        \Big |  \leqslant 2(t-s)\\
        \exp \Big( \Big | W(t)-W(s) \Big | -\frac{t-s}{2} \Big) &\text{ if }  \Big | W(t)-W(s) \Big | >2(t-s)
     \end{cases} \\
  &\leqslant \sqrt{e} \, \exp \Big( \Big|  W(t)-W(s)\Big| \Big) \,.      
\end{align*}
\noindent The result follows immediately.
\end{proof}
$\;$
\begin{prop}\cite{Sz} \label{Szego}
Let $\alpha \in [1/4,\,1/2]$. Then there exists a constant $c$ such that for any $n\in \N$ and any $x\in \R$  : 
$$
\Big | H_{n} (x)e^{-\alpha x^2} \Big | \leqslant c \, \sqrt{n!} \,n^{-(8\alpha-1)/12} \,.
$$
\end{prop}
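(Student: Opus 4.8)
The plan is to reduce the general weight $e^{-\alpha x^2}$, $\alpha\in[1/4,1/2]$, to the two extreme cases $\alpha=1/4$ and $\alpha=1/2$, for which the estimate is classical, and then to interpolate. The analytic input is the precise asymptotic behaviour of $H_n$ furnished by the Plancherel--Rotach formulas of \cite{Sz}. It is convenient to pass to the Liouville normal form: the function $u_n(x)=H_n(x)\,e^{-x^2/4}$ solves $u_n''+\big(n+\tfrac12-\tfrac{x^2}{4}\big)u_n=0$, so its turning points are at $x\approx 2\sqrt{n}$. In the oscillatory (``bulk'') region $|x|<2\sqrt n$ the WKB/Plancherel--Rotach amplitude of $u_n$ is of order $\sqrt{n!}\,\big(n+\tfrac12-\tfrac{x^2}{4}\big)^{-1/4}$, which equals $\sqrt{n!}\,n^{-1/4}$ near the origin, while near the turning point the behaviour is governed by the Airy function and the amplitude peaks at the larger order $\sqrt{n!}\,n^{-1/12}$.

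From this I would extract the two endpoint bounds. For $\alpha=1/4$ the weight is exactly $u_n$ itself, whose global maximum sits in the Airy regime, giving
$$\sup_{x\in\R}\big|H_n(x)\big|\,e^{-x^2/4}\leqslant c_1\sqrt{n!}\,n^{-1/12}.$$
For $\alpha=1/2$ one writes $|H_n(x)|\,e^{-x^2/2}=|u_n(x)|\,e^{-x^2/4}$; the extra factor $e^{-x^2/4}$ has size $e^{-n}$ at the turning point $x\approx 2\sqrt n$, so it suppresses the Airy peak exponentially, and the supremum is then controlled by the bulk, whose maximum (attained near the origin) is of order $\sqrt{n!}\,n^{-1/4}$:
$$\sup_{x\in\R}\big|H_n(x)\big|\,e^{-x^2/2}\leqslant c_2\sqrt{n!}\,n^{-1/4}.$$
A direct evaluation at the origin, where $|H_{2m}(0)|=(2m)!/(2^m m!)\sim c\,\sqrt{(2m)!}\,(2m)^{-1/4}$ by Stirling's formula, confirms that the exponent $1/4$ is the correct order here.

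The interpolation is then elementary. Given $\alpha\in[1/4,1/2]$, set $\theta=2-4\alpha\in[0,1]$, so that $\alpha=\tfrac{\theta}{4}+\tfrac{1-\theta}{2}$ and hence, pointwise in $x$,
$$\big|H_n(x)\big|\,e^{-\alpha x^2}=\Big(\big|H_n(x)\big|\,e^{-x^2/4}\Big)^{\theta}\Big(\big|H_n(x)\big|\,e^{-x^2/2}\Big)^{1-\theta}.$$
Substituting the two endpoint bounds yields
$$\big|H_n(x)\big|\,e^{-\alpha x^2}\leqslant c_1^{\theta}c_2^{1-\theta}\,\sqrt{n!}\;n^{-\theta/12-(1-\theta)/4},$$
and since $\tfrac{\theta}{12}+\tfrac{1-\theta}{4}=\tfrac{2-4\alpha}{12}+\tfrac{12\alpha-3}{12}=\tfrac{8\alpha-1}{12}$, this is exactly the asserted inequality with $c=\max(c_1,c_2)$.

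The genuinely hard part is the justification of the two endpoint estimates: the Plancherel--Rotach asymptotics together with the Airy analysis at the turning point that produces the critical $n^{-1/12}$ order, and the verification that the extra Gaussian factor at $\alpha=1/2$ indeed demotes the maximum to the bulk value $n^{-1/4}$. Once these classical facts are imported from \cite{Sz}, the remaining argument is a one-line application of the logarithmic convexity (Hölder inequality) in the exponent; I expect the only bookkeeping nuisance to be tracking the normalisation constants between the physicist's and probabilist's Hermite polynomials.
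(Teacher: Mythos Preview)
The paper does not give its own proof of this proposition; it is simply quoted from Szeg\H{o}'s monograph \cite{Sz} (the same estimate, in the same form, is also invoked without proof in \cite{IPA}). Your derivation is therefore not competing with any argument in the paper, but it is correct and is precisely the standard route to the statement for general $\alpha\in[1/4,1/2]$. The endpoint $\alpha=1/4$ is the classical uniform bound for the $L^2$-normalised Hermite functions, $\sup_x|\psi_n(x)|\leqslant C\,n^{-1/12}$, which Szeg\H{o} obtains from the Plancherel--Rotach asymptotics and the Airy behaviour at the turning point; the endpoint $\alpha=1/2$ with exponent $n^{-1/4}$ likewise follows from those asymptotics together with your observation that the additional factor $e^{-x^2/4}$ pushes the maximum from the edge back into the bulk (your check at $x=0$ via Stirling confirms the order is sharp). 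Your H\"older/log-convexity interpolation between the two endpoints is clean and produces exactly the exponent $(8\alpha-1)/12$. In short, your sketch is a faithful reconstruction of how the cited bound is actually obtained; the only caveat is the trivial one that $n=0$ must be handled separately (the paper later replaces $n$ by $n\vee 1$ when it applies the estimate).
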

$\;$\\
\begin{remark}
We can obtain an almost sure upper bound for $\Big | H_{n} \Big ( \frac{ W(t)-W(s)  }{\sqrt{t-s} } \Big )\Big |$ using proposition \ref{Szego}. If $\alpha \in [1/4,\,1/2]$, then there exists a constant $c$ such that for any $n\in \N$ and any $x\in \R$, almost surely, 
\begin{equation} \label{120}
\Bigg | H_{n} \Big ( \frac{ W(t)-W(s)  }{\sqrt{t-s} } \Big )\Bigg | \leqslant c \, \sqrt{n!} \,n^{-(8\alpha-1)/12} e^{-\alpha \frac{ (W(t)-W(s))^2  }{t-s }} 
\end{equation}
\noindent The advantage in \eqref{115} is that the Brownian increment $W(t)-W(s)$ is separated from the time increment $t-s$. In \eqref{120} we can use the Lévy modulus of continuity to control the ratio $\frac{ W(t)-W(s)  }{\sqrt{t-s} }$, but the estimate will only be valid for small time increments.
\end{remark}
$\;$\\
\noindent Now we can deduce the following almost sure upper bound of each term of the It\^{o}-Wiener expansion given by \eqref{8}.\\

\begin{thm}  Let $Z_i= \displaystyle \max _{ 0\leqslant s \leqslant 1} \big| w_i(s)\big| \,,i=1,..,d,\, d\geqslant 2$. Then, almost surely, when $u\in \R^d,\,u\to 0$,  
\begin{align*} 
&\Bigg |  \int_{\Delta_2 }\prod _{j=1}^d \, H_{n_j} \Big ( \frac{ w_j(t)-w_j(s)  }{\sqrt{t-s} } \Big ) H_{n_j}  \Big( \frac{ u_j  }{\sqrt{t-s} } \Big) \;  p^d_{t-s}(u)ds dt  \Bigg |  \\
&\leqslant \begin{cases} 
 C_0\log \Big( \frac{1}{\|u\|} \Big) \; &\text{if } d=2,\,n_1=n_2 =0\\
 C(n_1,..,n_d) \dfrac{ \exp \Big( 2Z_1+..+2Z_d \Big) }{\|u\| ^{n_1+\cdots+n_d+d-2 } } \; &\text{otherwise. }
 \end{cases} 
\end{align*}
The constants are deterministic.
\end{thm}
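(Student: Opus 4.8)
The plan is to bound the modulus of the integrand pointwise in $(s,t)$ by a product of two groups of factors --- the random ones carrying the Brownian increments and the deterministic ones carrying $u$ --- and then to integrate in $(s,t)$ by reducing everything to Proposition \ref{det.est.}.

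First I would treat the random Hermite factors $H_{n_j}\big((w_j(t)-w_j(s))/\sqrt{t-s}\big)$ with Proposition \ref{wiener.est}. Since $|w_j(t)-w_j(s)|\leqslant |w_j(t)|+|w_j(s)|\leqslant 2Z_j$ for all $0\leqslant s<t\leqslant 1$, the bound of that proposition becomes uniform in $(s,t)$, namely $\big|H_{n_j}((w_j(t)-w_j(s))/\sqrt{t-s})\big|\leqslant n_j!\,\sqrt{e}\,(t-s)^{-n_j/2}e^{2Z_j}$. Multiplying over $j$ gives the pathwise majorant $(\prod_j n_j!)\,e^{d/2}\,(t-s)^{-k/2}\,e^{2Z_1+\cdots+2Z_d}$ with $k=n_1+\cdots+n_d$. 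The key point is that the random factor $e^{2Z_1+\cdots+2Z_d}$ is finite almost surely and depends neither on $(s,t)$ nor on $u$, so it leaves the integral and produces exactly the exponential appearing in the claim; this is also why the estimate is almost surely uniform in $u$.

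Next I would treat the deterministic Hermite factors $H_{n_j}\big(u_j/\sqrt{t-s}\big)$ with Proposition \ref{Szego}, choosing $\alpha=1/4$ (for the indices with $n_j=0$ one simply uses $H_0=1$). This yields $\big|H_{n_j}(u_j/\sqrt{t-s})\big|\leqslant c\sqrt{n_j!}\,n_j^{-1/12}\exp\big(u_j^2/(4(t-s))\big)$, and taking the product over $j$ produces the factor $\exp\big(\|u\|^2/(4(t-s))\big)$ because $\sum_j u_j^2=\|u\|^2$. The Gaussian weight in Proposition \ref{Szego} is used precisely to control the blow-up of $H_{n_j}(u_j/\sqrt{t-s})$ as $t\to s$ while keeping a net Gaussian that is still integrable against $p^d_{t-s}$. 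Indeed, collecting the density $p^d_{t-s}(u)$ one has the clean identity $\exp\big(\|u\|^2/(4(t-s))\big)\,p^d_{t-s}(u)=p^d_{t-s}(u/\sqrt{2})$, which is where the choice $\alpha=1/4<1/2$ is essential.

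Finally, gathering all $n$-dependent constants (the $\prod_j (n_j!)^{3/2}$, the $\prod_j n_j^{-1/12}$ and $e^{d/2}$) into a single deterministic constant $C(n_1,\dots,n_d)$, the whole expression is dominated by $C(n_1,\dots,n_d)\,e^{2Z_1+\cdots+2Z_d}\int_{\Delta_2}(t-s)^{-k/2}\,p^d_{t-s}(u/\sqrt{2})\,ds\,dt$. This last integral is exactly the object of Proposition \ref{det.est.} with $\alpha=k/2$ and argument $u/\sqrt{2}$, and since $\|u/\sqrt{2}\|=\|u\|/\sqrt{2}$ the powers of $\|u\|$ it produces are unchanged up to constants. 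The condition $k/2>1-d/2$, i.e. $k+d>2$, holds in every case except $d=2$ with $n_1=n_2=0$, so the first branch of Proposition \ref{det.est.} gives the bound $\|u\|^{-(n_1+\cdots+n_d+d-2)}$; in the single remaining case $d=2,\ n_1=n_2=0$ all Hermite factors equal $1$, the integral is just $\int_{\Delta_2}p^2_{t-s}(u)\,ds\,dt$, and the boundary branch $\alpha=1-d/2$ gives the logarithmic bound $C_0\log(1/\|u\|)$. The calculation is otherwise mechanical; the only delicate points are the bookkeeping that pins down the exact exponent $n_1+\cdots+n_d+d-2$ (the rescaling $u\mapsto u/\sqrt{2}$ together with the choice $\alpha=1/4$), the separate handling of the indices $n_j=0$, and the observation that the sole source of randomness is the $u$-independent, almost surely finite factor $e^{2Z_1+\cdots+2Z_d}$.
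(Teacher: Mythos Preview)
Your proposal is correct and follows essentially the same route as the paper: bound the Brownian Hermite factors with Proposition~\ref{wiener.est} (pulling out $e^{2Z_1+\cdots+2Z_d}$ via $|w_j(t)-w_j(s)|\leqslant 2Z_j$), bound the deterministic Hermite factors with Proposition~\ref{Szego} at $\alpha=1/4$ so that the residual Gaussian combines with $p^d_{t-s}(u)$ into $p^d_{t-s}(u/\sqrt{2})$, and finish with Proposition~\ref{det.est.}. The paper's proof is identical in structure; your explicit remark on the $n_j=0$ indices and on the identity $e^{\|u\|^2/(4(t-s))}p^d_{t-s}(u)=p^d_{t-s}(u/\sqrt{2})$ are just the details the paper leaves implicit.
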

\begin{proof} From proposition \ref{Szego} we have :
$$ \left | H_{n}(x)e^{-\frac{ x^2}{4} } \right |\; \leq \; C(n) \quad \forall \; x \in  \R, \; \forall \; n \in \mathbb{N}\,.$$
Combining this with propositions \ref{det.est.} and \ref{wiener.est} we obtain :
\begin{align*} 
&\Bigg |  \int_{\Delta_2  } \left\lbrace  \prod _{i=1}^d H_{n_i} \Big ( \frac{ w_i(t)-w_i(s)  }{\sqrt{t-s} } \Big ) \, H_{n_i}  \Bigg( \frac{ u_i  }{\sqrt{t-s} } \Bigg) \right\rbrace  p^d_{t-s}(u)dsdt \Bigg | \\ 
&\leqslant  \int_{\Delta_2  } \left\lbrace  \prod _{i=1}^d \Bigg |   H_{n_i} \Big ( \frac{ w_i(t)-w_i(s)  }{\sqrt{t-s} } \Big )  \, H_{n_i}  \Bigg( \frac{ u_i  }{\sqrt{t-s} } \Bigg) \exp \Bigg ( -\frac{ u_i ^2 }{4(t-s) } \Bigg)  \Bigg | \right\rbrace  p^d_{t-s}\Big( \frac{u}{\sqrt{2}} \Big)   dsdt  \\
&\leqslant C_0(n_1,\cdots,n_d) \int_{\Delta_2  } \Bigg |   \prod _{i=1}^d H_{n_i} \Big ( \frac{ w_i(t)-w_i(s)  }{\sqrt{t-s} } \Big ) \Bigg |  \, p^d_{t-s}\Big( \frac{u}{\sqrt{2}} \Big) dsdt \\
& \leqslant C_1(n_1,\cdots,n_d)\int_{\Delta_2  } \dfrac{\exp \Big( \sum_{i=1}^d \big|  w_i(t)-w_i(s)\big| \Big)   }{(t-s)^{\frac{n_1+\cdots+n_d}{2}} }   \; p^d_{t-s}\Big( \frac{u}{\sqrt{2}} \Big) dsdt \\
& \leqslant C_1(n_1,\cdots,n_d)\exp \Big( \sum_{i=1}^d 2Z_i \Big) \int_{\Delta_2   } \dfrac{1}{(t-s)^{\frac{n_1+\cdots+n_d}{2}} }   \; p^2_{t-s}\Big( \frac{u}{\sqrt{2}} \Big) dsdt \\
& \leqslant C_2(n_1,\cdots,n_d)
 \begin{cases} 
 \log \Big( \dfrac{1}{\|u\|} \Big) \; &\text{if } d=2,\, n_1+n_2 =0\\
 \dfrac{ \exp \Big( \sum_{i=1}^d 2Z_i \Big) }{\|u\| ^{n_1+\cdots+n_d+d-2 } } \; &\text{otherwise. }
 \end{cases}
\end{align*} 
\end{proof}

\section{Asymptotics for generalized random functionals as a measure on the Wiener space }

\noindent In this section we focus on the case of dimension $d\geqslant 4$. Our first result which consists in considering the local time as a measure on the Wiener space is based on theorem \ref{sugita}. The classical Wiener space is defined by : 
$$W_0^d=\left\lbrace \omega :[0,1]\to \R^d,\, \text{continuous},\, \omega(0)=0\right\rbrace.  $$ 
This space is equipped with the Borel sigma-field $\mathcal{B}(W_0^d)$ which is derived from the topology of uniform convergence. It is also endowed with the standard Wiener measure $\mu$ \cite{Sugita}. The triplet $(W_0^d,\mathcal{B}(W_0^d),\mu)$ is a probability space and the canonical process defined by :
\begin{equation}\label{BM}
B_t(\omega)=\omega (t),\; \omega \in W_0^d,\, t\in [0,1] 
\end{equation}  
is a Brownian motion. In the sequel, we may use $(W_0^d,\mathcal{B}(W_0^d),\mu)$ as the probability space instead of $\big( \Omega, \sigma(w),\mathbb{P}\big) $ and the process defined by \eqref{BM} as the Brownian motion instead of the process $w(t),\, 0\leqslant t\leqslant 1$.\\

\begin{thm}\label{suta} \cite{Sugita}
If a generalized Wiener functional $\Phi\in \D^{2,-\infty}$ is positive, i.e. 
$$ (\Phi, F)\geqslant 0\quad \forall \; F\in \D^{2,+\infty} \,;\; F(\omega)\geqslant 0 \; \mu\, a.e.$$
then, there exists a unique finite positive measure $\theta_{\Phi} $ on $(W_0^d,\mathcal{B}(W_0^d))$ such that 
$$\forall \; F\in \D^{2,+\infty}, \quad(\Phi, F)=\int F(\omega )\,\theta_{\Phi} (d\omega) \, .$$
\end{thm}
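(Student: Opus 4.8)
The statement is a Riesz--Markov type representation theorem, so the plan is to realize the pairing $F\mapsto(\Phi,F)$ as a positive linear functional and then manufacture the measure from its finite-dimensional marginals. First I would record the analytic input: since $\Phi\in\D^{2,-\infty}$, it lies in $\D^{2,-\alpha}$ for some $\alpha>0$, and duality gives the continuity estimate $|(\Phi,F)|\leqslant\|\Phi\|_{2,-\alpha}\,\|F\|_{2,\alpha}$ for every $F\in\D^{2,+\infty}$. The positivity hypothesis then says this functional is nonnegative on nonnegative test functions. Applying it to $F=1\in\D^{2,+\infty}$ produces the candidate total mass $(\Phi,1)$, which is finite by the same estimate; this will be the key a priori bound.

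Next I would construct the measure via consistent marginals. For a finite partition $0=t_0<t_1<\cdots<t_n=1$, the bounded smooth cylinder functionals $F=\vp(B_{t_1},\dots,B_{t_n})$ with $\vp\in C_c(\R^{nd})$ belong to $\D^{2,+\infty}$, and the restriction of $\Phi$ to such $F$ is a positive linear functional on $C_c(\R^{nd})$. By the classical Riesz representation theorem it is integration against a positive Radon measure $\theta^{(n)}$ on $\R^{nd}$, of total mass at most $(\Phi,1)$. Refining the partition, the natural coordinate projections identify the marginals, so the family $\{\theta^{(n)}\}$ is Kolmogorov-consistent and defines a projective (cylindrical) family of measures.

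The hard part is upgrading this projective family to a genuine tight Borel measure $\theta_\Phi$ supported on the continuous paths $W_0^d$. For this I would exploit that polynomial cylinder functionals also lie in $\D^{2,+\infty}$: applying $\Phi$ to $F=\|B_t-B_s\|^{2m}$ and using the continuity estimate together with the scaling $B_t-B_s\sim\sqrt{t-s}\,N$ yields a bound of the form $\int\|B_t-B_s\|^{2m}\,\theta_\Phi\leqslant C\,(t-s)^{m}$. Choosing $m$ large enough, the Kolmogorov--Chentsov criterion forces the cylindrical family to concentrate on continuous trajectories, so it extends to a finite positive measure on $\big(W_0^d,\mathcal{B}(W_0^d)\big)$ with $\theta_\Phi(W_0^d)=(\Phi,1)$. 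Uniqueness follows because the smooth cylinder functionals form a measure-determining algebra on $W_0^d$: any two representing measures agree on them, hence coincide.

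I expect the principal obstacle to be the quasi-sure analysis underlying the compatibility of $(\Phi,F)$ with pointwise evaluation. Elements of $\D^{2,+\infty}$ are defined only $\mu$-almost everywhere, whereas the measure $\theta_\Phi$ may charge $\mu$-null sets; making the pairing well defined against a genuine pointwise representative requires passing to quasi-continuous modifications and knowing that a positive element of $\D^{2,-\infty}$ does not charge sets of zero capacity. This is precisely where capacity and potential theory enter, and where the original argument of \cite{Sugita} does its essential work; in a self-contained treatment it is this step, rather than the Riesz representation or the tightness estimate, that demands the most care.
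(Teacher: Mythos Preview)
The paper does not prove this theorem: it is quoted from \cite{Sugita} as a known result and used as a black box, so there is no proof in the text to compare your proposal against.

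That said, your outline is a reasonable sketch of one way to establish the statement, and you correctly identify the genuinely delicate point: elements of $\D^{2,+\infty}$ are defined only $\mu$-a.e., while $\theta_\Phi$ may well be singular with respect to $\mu$, so the formula $(\Phi,F)=\int F\,d\theta_\Phi$ only makes sense once one fixes a quasi-continuous redefinition of $F$ and shows that $\theta_\Phi$ does not charge slim sets. This is not a side issue but the heart of the matter, and in Sugita's original argument it is handled by smoothing with the Ornstein--Uhlenbeck semigroup: for $t>0$ the object $T_t\Phi$ is an honest nonnegative element of $L^2(\mu)$, so $\theta_t:=T_t\Phi\cdot\mu$ is a bona fide finite measure, and $\theta_\Phi$ is obtained as the weak limit of $\theta_t$ as $t\downarrow 0$, with tightness coming from uniform capacity estimates. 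Your route via finite-dimensional marginals and Kolmogorov--Chentsov is in principle workable, but the step where you pair $\Phi$ with the unbounded test function $\|B_t-B_s\|^{2m}$ and read off a moment bound for the not-yet-constructed measure is circular as written; you would need to first justify the identity for the finite-dimensional marginals via monotone approximation by compactly supported smooth cylinder functions converging in $\D^{2,\alpha}$, which is doable but requires care.
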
 

\noindent The next statement easily follows from theorem \ref{suta}.\\

\begin{thm} 
Let $u\in \R^d,\,u\neq 0$. The limit 
$$ \rho_2 (u)=\lim_{\varepsilon \to 0} \int_{\Delta_2  }  p^d_{\varepsilon}\big( w(t)-w(s)-u\big)dsdt  $$
is a positive generalized Wiener functional. Consequently, there exists a unique finite positive measure $\theta_{u} $ on $(W_0^d,\mathcal{B}(W_0^d))$ such that 
$$\forall \; F\in \D^{2,+\infty},\quad (\rho_2(u), F)=\int F(\omega )\,\theta_{u} (d\omega),.$$
\end{thm}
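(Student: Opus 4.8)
The plan is to invoke Theorem \ref{suta} with $\Phi=\rho_2(u)$, so the entire task reduces to verifying the two hypotheses of that theorem: that $\rho_2(u)$ belongs to $\D^{2,-\infty}$ and that it is positive in the sense of pairing nonnegatively against every nonnegative $F\in\D^{2,+\infty}$. The first hypothesis is essentially already granted: by the theorem of \cite{IPA} quoted above, $\rho_2(u)=\lim_{\ve\to0}\int_{\Delta_2}p^d_{\ve}(w(t)-w(s)-u)\,dsdt$ exists in $\D^{2,\gamma}$ for every $\gamma<(4-d)/2$, and since $\D^{2,-\infty}=\cup_{\alpha>0}\D^{2,-\alpha}$ contains all such spaces (pick any $\gamma<0$, which is possible because $d\geqslant4$ forces $(4-d)/2\leqslant0$), we have $\rho_2(u)\in\D^{2,-\infty}$. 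So the real content is establishing positivity.

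For positivity I would argue by approximation. Fix $F\in\D^{2,+\infty}$ with $F(\omega)\geqslant0$ $\mu$-a.e. For each fixed $\ve>0$ the approximant $\rho_2^{\ve}(u):=\int_{\Delta_2}p^d_{\ve}(w(t)-w(s)-u)\,dsdt$ is a genuine (nonnegative) square-integrable random variable, indeed an element of $\D^{2,+\infty}\subset L^2$, because the Gaussian kernel $p^d_{\ve}$ is a smooth bounded nonnegative function of the Brownian increments. For such an honest $L^2$ functional the pairing is just the expectation of a product, so
\begin{equation*}
(\rho_2^{\ve}(u),F)=\E\big[\rho_2^{\ve}(u)\,F\big]=\E\Big[\Big(\int_{\Delta_2}p^d_{\ve}(w(t)-w(s)-u)\,dsdt\Big)F\Big]\geqslant0,
\end{equation*}
since both factors inside the expectation are nonnegative almost surely (the integrand $p^d_{\ve}\geqslant0$ and $F\geqslant0$). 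The plan is then to pass to the limit $\ve\to0$: because $\rho_2^{\ve}(u)\to\rho_2(u)$ in some $\D^{2,-\alpha}$ with $\alpha>0$, and $F\in\D^{2,+\infty}\subset\D^{2,\alpha}$, the duality pairing between $\D^{2,-\alpha}$ and $\D^{2,\alpha}$ is continuous, whence $(\rho_2^{\ve}(u),F)\to(\rho_2(u),F)$. The limit of a sequence of nonnegative numbers is nonnegative, so $(\rho_2(u),F)\geqslant0$, which is exactly the positivity required.

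The step I expect to be the main obstacle is making the limiting argument fully rigorous at the level of the duality pairing. One must be careful that the bracket $(\cdot,\cdot)$ appearing in Theorem \ref{suta} is the extension to $\D^{2,-\alpha}\times\D^{2,\alpha}$ of the $L^2$ inner product, and that the convergence $\rho_2^{\ve}(u)\to\rho_2(u)$ in $\D^{2,-\alpha}$ genuinely transfers to convergence of the pairings with a fixed $F\in\D^{2,+\infty}$. This is where the nesting $\D^{2,+\infty}\subset\D^{2,\alpha}$ and the fact that $\D^{2,-\alpha}$ is the dual of $\D^{2,\alpha}$ under this bracket are used; once these are in place the Cauchy--Schwarz-type bound $|(\eta,F)|\leqslant\|\eta\|_{2,-\alpha}\|F\|_{2,\alpha}$ makes the pairing continuous and the limit transition immediate. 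A secondary point worth a line is confirming that each $\rho_2^{\ve}(u)$ really lies in $\D^{2,+\infty}$ (or at least in $L^2$), so that the elementary expectation formula for the pairing is legitimate before passing to the limit. With positivity established, the existence and uniqueness of $\theta_u$ follow directly from Theorem \ref{suta}, completing the proof.
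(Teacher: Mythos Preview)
Your proposal is correct and follows exactly the route the paper intends: the paper itself gives no detailed proof, merely remarking that the statement ``easily follows from theorem \ref{suta}'', and your argument supplies precisely the missing verification---membership in $\D^{2,-\infty}$ via \cite{IPA} and positivity via the nonnegativity of the approximants $\rho_2^{\ve}(u)$ together with continuity of the duality pairing. There is nothing to add.
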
 
$\;$\\
\noindent Next, we focus on the support of the measure $\theta_u$. For this purpose we will use the following lemma.\\

\begin{lem}\cite{Sugita}\label{lema}
Let $A$ be a closed subset of $W_0^d$ and $\delta >0$. Let $A_{\delta}=\cup _{x\in A} B_{(x, \delta)}$ be the $\delta$-neighborhood of $A$ ( $B_{(x, \delta)}$ denotes the open ball with center $x$ and radius $\delta $). There exists $F\in \D^{2,+\infty}$ such that : 
$$ \mathbf{1}_A\leqslant F\leqslant  \mathbf{1}_{A_{\delta}} .$$
\end{lem}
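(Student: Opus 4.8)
The plan is to realise $F$ as a smooth cut-off built from the uniform-norm distance to $A$, and then to upgrade its regularity to $\D^{2,+\infty}$. Throughout, write $\|\omega\|_{\infty}=\sup_{t\in[0,1]}\|\omega(t)\|$ for the norm of $W_0^d$ and $\|\cdot\|_H$ for the Cameron--Martin norm of $d$-dimensional Brownian motion, so that $\|h\|_{\infty}\le \|h\|_H$ for every $h$ in the Cameron--Martin space $H$.

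First I would introduce $\rho(\omega)=\inf_{x\in A}\|\omega-x\|_{\infty}$. Since $A$ is closed, $\rho(\omega)=0$ exactly on $A$ and $A_\delta=\{\rho<\delta\}$. The map $\rho$ is $1$-Lipschitz for $\|\cdot\|_{\infty}$, and because $H$ embeds continuously into $W_0^d$ it is also $1$-Lipschitz along $H$; hence $\rho$ is $H$-differentiable $\mu$-a.e.\ with $\|D\rho\|_H\le 1$, so $\rho\in\D^{2,1}$. Fixing $\phi\in C^{\infty}(\R;[0,1])$ with $\phi\equiv1$ on $(-\infty,\delta/3]$ and $\phi\equiv0$ on $[2\delta/3,\infty)$, the composition $G=\phi\circ\rho$ satisfies $0\le G\le1$, $G\equiv1$ on $\{\rho\le\delta/3\}\supseteq A$ and $G\equiv0$ on $\{\rho\ge2\delta/3\}\supseteq W_0^d\setminus A_\delta$. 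Thus $\1_A\le G\le\1_{A_\delta}$, with $G\in\D^{2,1}$ by the chain rule for $H$-Lipschitz functions.

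The remaining and genuinely delicate step is to replace $G$ by a function with the same two-sided bound that lies in every $\D^{2,\alpha}$, i.e.\ in $\D^{2,+\infty}$. I expect this reconciliation of infinite Malliavin smoothness with the sharp pointwise inequality to be the main obstacle, because the two obvious smoothings both fail. The Ornstein--Uhlenbeck semigroup $T_t$, acting by $e^{-kt}$ on the $k$-th chaos, does map $L^2$ into $\D^{2,+\infty}$ for every $t>0$, since $\|T_t\eta\|_{2,\gamma}^2=\sum_k (k+1)^{\gamma}e^{-2kt}\,\E\,I_k(f_k)^2\le\big(\sup_k (k+1)^{\gamma}e^{-2kt}\big)\,\|\eta\|_{2,0}^2<\infty$, and being Markovian it preserves $0\le T_tG\le1$; but it spreads mass, giving $T_tG<1$ on $A$ and $T_tG>0$ off $A_\delta$, so the sandwich is destroyed. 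Conversely, replacing $\rho$ by a finite-time cylinder distance $\rho_N(\omega)=\inf_{x\in A}\max_{j\le N}\|\omega(t_j)-x(t_j)\|$ produces, after finite-dimensional mollification, a smooth cylinder function in $\D^{2,+\infty}$ that keeps the lower bound; but since $A$ need not be compact one only has $\rho_N\le\rho$ with $\rho_N\not\to\rho$ in general, so the uniform distance, and with it the upper bound $F\le\1_{A_\delta}$, is not recovered.

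To overcome this I would follow \cite{Sugita}: interpret the inequalities quasi-everywhere, through the quasi-continuous versions carried by elements of $\D^{2,+\infty}$, and smooth $G$ by a procedure that respects the Cameron--Martin structure so that the support is enlarged by at most $\delta/3$, working on the nested neighbourhoods $A\subset A_{\delta/3}\subset A_{2\delta/3}\subset A_\delta$. Combining the bounded-gradient estimate $\|D\rho\|_H\le1$, the monotone approximation of $\rho$, and the quantitative smoothing bound above, one controls the $\D^{2,\alpha}$-norms of the approximants uniformly and extracts a single limit $F\in\D^{2,+\infty}$ with $\1_A\le F\le\1_{A_\delta}$. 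The heart of the argument is precisely this limiting control in all Sobolev indices simultaneously, which is the technical content supplied by \cite{Sugita}.
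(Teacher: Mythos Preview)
The paper does not supply its own proof of this lemma; it is quoted from Sugita \cite{Sugita} and used as a black box in the subsequent theorem on $\Supp(\theta_u)$. So there is no in-paper argument to compare yours against, and in that narrow sense your write-up already goes further than the paper does.

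On the substance of your sketch: the construction $G=\phi\circ\rho$ with $\rho(\omega)=\mathrm{dist}_{\|\cdot\|_\infty}(\omega,A)$ is the right starting point, and your observations that $\rho$ is $1$-Lipschitz for $\|\cdot\|_\infty$, hence $H$-Lipschitz via the embedding $H\hookrightarrow W_0^d$, hence in $\D^{2,1}$ with $\|D\rho\|_H\le 1$, are correct. The sandwich $\1_A\le G\le\1_{A_\delta}$ is immediate from the choice of $\phi$. Your diagnosis of why the two naive smoothings fail --- the Ornstein--Uhlenbeck semigroup destroys the pointwise bounds, while cylinder approximation of $\rho$ need not recover the full sup-norm distance for noncompact $A$ --- is also accurate.

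Where your argument has a genuine gap is the final paragraph. You announce that you would ``follow \cite{Sugita}'' and then describe the outcome (a limit $F\in\D^{2,+\infty}$ satisfying the sandwich) rather than a mechanism producing it. The only quantitative estimate you have written down is the one showing $T_tG\in\D^{2,+\infty}$, and you have already explained why that particular $T_tG$ fails the inequality; invoking the same bound on unspecified ``approximants'' does not explain how one simultaneously keeps both the pointwise sandwich and membership in every $\D^{2,\alpha}$. The phrases ``interpret the inequalities quasi-everywhere'' and ``smooth $G$ by a procedure that respects the Cameron--Martin structure so that the support is enlarged by at most $\delta/3$'' are descriptions of what a successful construction must achieve, not a construction. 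As a self-contained proof, then, the argument stops exactly at the point you yourself identified as the heart of the matter; as a pointer to \cite{Sugita}, it matches what the paper does.
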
 
$\;$

\begin{thm}
Let $u\in \R^d,\,u\neq 0$. The support of the measure $\theta_u$ is included in the closed set : 
$$ E_u=\left\lbrace \phi \in W_0^d\; ; \; \exists \;0\leqslant s<t \leqslant 1,\, \phi(t)-\phi (s)=u \right\rbrace . $$
\end{thm}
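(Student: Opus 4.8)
The plan is to show that the open set $W_0^d\setminus E_u$ is $\theta_u$-null, which is precisely the assertion $\Supp(\theta_u)\subseteq E_u$; equivalently, I would fix an arbitrary path $\phi_0\notin E_u$ and exhibit an open neighbourhood of $\phi_0$ of zero $\theta_u$-measure. First I would check that $E_u$ is closed: if $\phi_n\to\phi$ uniformly with $\phi_n(t_n)-\phi_n(s_n)=u$, then after extracting a convergent subsequence $(s_n,t_n)\to(s,t)$, uniform convergence together with the continuity of $\phi$ gives $\phi(t)-\phi(s)=u$, while $s=t$ is impossible since $u\neq 0$; hence $\phi\in E_u$. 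Consequently $r:=\mathrm{dist}(\phi_0,E_u)>0$, and I would set $\delta=r/3$ and $A=\{\phi:\|\phi-\phi_0\|_\infty\le\delta\}$, whose $\delta$-neighbourhood $A_\delta$ then satisfies $\mathrm{dist}(A_\delta,E_u)\ge\delta$. Lemma \ref{lema} supplies $F\in\D^{2,+\infty}$ with $\1_A\le F\le\1_{A_\delta}$, and it suffices to prove $(\rho_2(u),F)=0$, because then $\theta_u(A)\le\int F\,d\theta_u=(\rho_2(u),F)=0$.

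The key geometric step is the implication: if $\mathrm{dist}(\phi,E_u)\ge\delta$ then $\|\phi(t)-\phi(s)-u\|\ge\delta$ for every $0\le s<t\le1$. I would establish the contrapositive by an explicit construction. Given $s_0<t_0$ and setting $v:=u-(\phi(t_0)-\phi(s_0))$, define the ramp $h$ equal to $0$ on $[0,s_0]$, rising linearly to $1$ on $[s_0,t_0]$, and equal to $1$ on $[t_0,1]$, and put $\psi=\phi+h\,v$. Then $\psi\in W_0^d$ (since $\phi(0)=0$ and $h(0)=0$), one has $\psi(t_0)-\psi(s_0)=\phi(t_0)-\phi(s_0)+v=u$ so that $\psi\in E_u$, and $\|\psi-\phi\|_\infty=\|v\|$. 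Hence $\mathrm{dist}(\phi,E_u)\le\|\phi(t_0)-\phi(s_0)-u\|$, which is the claim. Applying this to every $\phi\in A_\delta$ shows that $\|\phi(t)-\phi(s)-u\|\ge\delta$ uniformly in $s<t$ on the set where $F$ does not vanish.

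It remains to evaluate $(\rho_2(u),F)$. Writing $\rho_2^{\ve}:=\int_{\Delta_2}p^d_{\ve}(w(t)-w(s)-u)\,dsdt$, which is a bounded, hence $L^2$, random variable, the result recalled from \cite{IPA} gives $\rho_2^{\ve}\to\rho_2(u)$ in $\D^{2,\gamma}$ for a suitable $\gamma<0$; since $F\in\D^{2,+\infty}\subseteq\D^{2,-\gamma}$ and the duality pairing is continuous, $(\rho_2(u),F)=\lim_{\ve\to0}(\rho_2^{\ve},F)=\lim_{\ve\to0}\E[\,F\,\rho_2^{\ve}\,]$. Because $0\le F\le\1_{A_\delta}$ and $\rho_2^{\ve}\ge0$, on the event $\{w\in A_\delta\}$ the previous step forces $\|w(t)-w(s)-u\|\ge\delta$, whence $p^d_{\ve}(w(t)-w(s)-u)\le(2\pi\ve)^{-d/2}e^{-\delta^2/(2\ve)}$; integrating over $\Delta_2$ and taking expectations gives $0\le\E[F\rho_2^{\ve}]\le\tfrac12(2\pi\ve)^{-d/2}e^{-\delta^2/(2\ve)}\to0$. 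Therefore $(\rho_2(u),F)=0$, so $\theta_u(A)=0$; as $A$ contains the open ball of radius $\delta$ about $\phi_0$, the point $\phi_0$ lies outside $\Supp(\theta_u)$, which proves the inclusion.

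I expect the main obstacle to be the geometric lemma of the second paragraph, namely converting the sup-norm distance to $E_u$ into a uniform lower bound on the increments $\|\phi(t)-\phi(s)-u\|$, since the remaining ingredients are either soft topology or the exponential decay of the Gaussian kernel. A secondary technical point requiring care is the justification that the distributional pairing $(\rho_2(u),F)$ may be computed as the limit of the genuine expectations $\E[F\rho_2^{\ve}]$, which rests on the continuity of the $\D^{2,\gamma}$–$\D^{2,-\gamma}$ duality and on the $\D^{2,\gamma}$-convergence $\rho_2^{\ve}\to\rho_2(u)$ recorded above.
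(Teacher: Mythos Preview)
Your proof is correct and follows the same overall architecture as the paper's: show $E_u$ is closed, fix $\phi_0\notin E_u$, invoke Lemma~\ref{lema} to sandwich a test function $F$ between indicators of nested neighbourhoods of $\phi_0$, and then use the Gaussian decay of $p^d_\varepsilon$ together with a uniform lower bound on $\|\omega(t)-\omega(s)-u\|$ to force $(\rho_2(u),F)=0$. The one genuine point of divergence is how that uniform lower bound is obtained. The paper argues directly from compactness: the set $K_{\phi_0}=\{\phi_0(t)-\phi_0(s):0\le s\le t\le 1\}$ is compact and avoids $u$, hence $\|\phi_0(t)-\phi_0(s)-u\|\ge r$ for some $r>0$, and a simple triangle inequality then propagates this bound to any $\omega$ with $\|\omega-\phi_0\|_\infty<r/2$. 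You instead establish the sharper inequality $\mathrm{dist}(\phi,E_u)\le\inf_{s<t}\|\phi(t)-\phi(s)-u\|$ by the explicit ramp construction $\psi=\phi+h\,v$. The paper's route is shorter and entirely elementary; your route costs a small construction but yields a quantitative identification of the sup-norm distance to $E_u$ with the increment defect, a statement of some independent interest.
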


\begin{proof}
Let $(\phi_n)_n$ be a sequence of elements from $E_u$ that converges in $ W_0^d$ (uniformly) to $\phi$. For each $n$ there exists $0\leqslant s_n<t_n \leqslant 1$ such that $\phi_n(t_n)-\phi_n (s_n)=u$. Since the sequence $\big((s_n,t_n)\big)_n$ is bounded, there exists a convergent subsequence : $ \big(s_{\tau (n)},t_{\tau(n)}\big) \to (s,t).$ It follows that 
$$ \phi(t)-\phi (s)=\lim_{n\infty}  \phi_{\tau (n)}(t_{\tau (n)})-\phi_{\tau (n)} (s_{\tau (n)})=u\,. $$
(necessarily $s<t$ because $u\neq 0$) So $E_u$ is closed.\\

\noindent Now let $\phi \in W_0^d\setminus E_u$. By continuity of $\phi$, the set $K_{\phi}=\left\lbrace \phi (t)-\phi (s),\, 0\leqslant s\leqslant t \leqslant 1\right\rbrace $ is compact. And since $\phi \notin E_u$ then $u\notin K_{\phi}$. Thus, there exists $r >0$ such that :
$$ \| \phi (t)-\phi (s)-u \| \geqslant r \quad \forall \quad 0\leqslant s\leqslant t \leqslant 1. $$
Let $0<r_1<r_2<r/2$. Then, for each $\omega \in B_{(\phi, r_2) }$ (the open ball of $W_0^d$ with center $\phi$ and radius $r_2$),  
$$ r\leqslant \| \phi (t)-\phi (s)-u \| \leqslant \| \omega (t)-\omega (s)-u \|   + 2\| \omega -\phi \|_{\infty} \leqslant \| \omega (t)-\omega (s)-u \|   + 2r_2 $$
so that 
$$ \| \omega (t)-\omega (s)-u \| \geqslant r -2r_2 \quad \forall \quad  0\leqslant s\leqslant t \leqslant 1. $$

\noindent By virtue of Lemma \eqref{lema}, there exists $F\in \D^{2,+\infty} $ such that : 
$$ \mathbf{1}_{ B_{(\phi, r_1)} }\leqslant F\leqslant  \mathbf{1}_{B_{(\phi, r_2)}} .$$
Consequently,
\begin{align*}
\theta_u \Big( B_{(\phi, r_1)}\Big)
&=\int \mathbf{1}_{ B_{(\phi, r_1) }}(\omega )\,\theta_{u} (d\omega)\\
&\leqslant \int F(\omega )\,\theta_{u} (d\omega)=(\rho_2(u), F) \\
&\leqslant \lim_{\ve \to 0}  \E F(w)\int_{\Delta_2  }  p^d_{\varepsilon}\big( w(t)-w(s)-u\big)dsdt \\
&\leqslant \lim_{\ve \to 0}  \E \mathbf{1}_{ B_{(\phi, r_2) }}(w)\int_{\Delta_2  }  p^d_{\varepsilon}\big( w(t)-w(s)-u\big)dsdt \\
&\leqslant \lim_{\ve \to 0}  \E \mathbf{1}_{ B_{(\phi, r_2) }}(w)\int_{\Delta_2  }  p^d_{\varepsilon}\big( w(t)-w(s)-u\big)dsdt \\
&\leqslant \lim_{\ve \to 0}  \int_{\Delta_2  }  \dfrac{1}{ (2\pi \ve )^{d/2}}\exp \big( -( r -2r_2)^2/(2\ve) \big) dsdt=0\,.
\end{align*}
We deduce : 
$$  \theta_u \Big( B_{(\phi, r_1)}\Big)=0\quad \forall \quad 0<r_1< r/2\,   $$
which finishes the proof.
\end{proof}

$\;$\\
\noindent The next part of this section is devoted to establishing the formula \eqref{integ.eq} on theorem \ref{prop9}. Propositions \ref{propp1}, \ref{propp2} and theorem \ref{thmm} are auxiliary results that will be used in the proof of theorem \ref{prop9}. These auxiliary results are of independent interest and aim to prove that conditional expectation saves the differentiability of random variables as will be stated in theorem \ref{thmm}. We start by recalling the notion of second quantization operator, which will be used in the sequel.\\

\noindent Consider the Hilbert space $ H=L^2\Big( [0,1]; \R^d\Big) $ and let $A$ be a bounded linear operator in $H$ such that $\|A\|\leqslant 1$.\\
\noindent Here we use $(W_0^d,\mathcal{B}(W_0^d),\mu)$ as the probability space. Let $\eta  = \sum _{k=0}^{\infty} I_k(a_k)\, \in L^2\big(W_0^d,\mathcal{B}(W_0^d),\mu \big)  $ .\\
\noindent Each kernel $a_k$ can be identified with a unique symmetric multilinear Hilbert Shmidt form $A_k$ defined on $H^k$. \\
\noindent Let $A_k^A$ be the new symmetric multilinear Hilbert Shmidt form defined by : 
$$ A_k^A (f_1,\cdots, f_k) =A_k (Af_1,\cdots, Af_k) . $$ 
\noindent If we denote by $a_k^A$ the kernel associated with $ A_k^A$ then the second quantization operator $\Gamma (A)$ is defined by : 
$$\Gamma(A)\eta = \sum _{k=0}^{\infty} I_k\big(a_k^A\big). $$
Using the inequality $\big\| I_k\big( a_{k}^A\big) \big \|_2^2\leqslant \big\| A \big \|^{2k} \big\| I_k\big( a_{k}\big) \big \|_2^2$ (here $\big\|. \big \|_2$ denotes the $L^2$-norm), the following proposition becomes straightforward.\\
 
\begin{prop}\label{propp1}\cite{Simon} Let $A$ be a bounded linear operator in $H$ such that $\|A\|\leqslant 1$ and $\eta \in L^2\big( \Omega, \sigma (w),\mathbb{P} \big)$. 
\begin{enumerate}
\item If $\|A\|= 1,\, n\in \N^*$ and $\eta\in \D^{2,n}$ then $\Gamma(A)\eta\in \D^{2,n}$.
\item If $\|A\|< 1$ then $\Gamma(A)\eta\in \D^{2,+\infty}$.
\end{enumerate}
\end{prop}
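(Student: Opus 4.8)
The plan is to prove both statements as direct consequences of the key norm inequality $\big\| I_k(a_k^A)\big\|_2^2 \leqslant \|A\|^{2k}\,\big\| I_k(a_k)\big\|_2^2$, which the excerpt supplies immediately before the proposition. The entire argument reduces to estimating the Sobolev norms $\|\Gamma(A)\eta\|_{2,\gamma}^2 = \sum_{k=0}^\infty (k+1)^\gamma\, \E\, I_k(a_k^A)^2$ against the corresponding norms of $\eta$, exploiting how the geometric factor $\|A\|^{2k}$ interacts with the polynomial weight $(k+1)^\gamma$.

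For part (1), I would fix $\|A\|=1$ and $\eta\in\D^{2,n}$, so that $\sum_{k}(k+1)^n\,\E\,I_k(a_k)^2 <\infty$. Applying the norm inequality term by term with $\|A\|^{2k}\leqslant 1$, I get
$$
\|\Gamma(A)\eta\|_{2,n}^2 = \sum_{k=0}^\infty (k+1)^n\,\E\,I_k(a_k^A)^2 \leqslant \sum_{k=0}^\infty (k+1)^n\,\E\,I_k(a_k)^2 = \|\eta\|_{2,n}^2 <\infty,
$$
which shows $\Gamma(A)\eta\in\D^{2,n}$. This half is genuinely immediate once the inequality is in hand.

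For part (2), the point is to upgrade from a single finite Sobolev norm to membership in $\D^{2,+\infty}=\cap_{\gamma>0}\D^{2,\gamma}$, so I must bound $\|\Gamma(A)\eta\|_{2,\gamma}^2$ for every $\gamma>0$ using only $\eta\in L^2$, i.e.\ only $\sum_k \E\,I_k(a_k)^2<\infty$. Writing $q=\|A\|<1$, the term-by-term estimate gives $(k+1)^\gamma\,\E\,I_k(a_k^A)^2 \leqslant (k+1)^\gamma q^{2k}\,\E\,I_k(a_k)^2$. The decisive observation is that for fixed $\gamma>0$ and $q<1$ the sequence $(k+1)^\gamma q^{2k}$ is bounded by some constant $C_{\gamma,q}<\infty$, since $(k+1)^\gamma q^{2k}\to 0$ as $k\to\infty$. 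Hence $\|\Gamma(A)\eta\|_{2,\gamma}^2 \leqslant C_{\gamma,q}\sum_{k=0}^\infty \E\,I_k(a_k)^2 = C_{\gamma,q}\,\|\eta\|_2^2<\infty$, valid for all $\gamma>0$, giving $\Gamma(A)\eta\in\D^{2,+\infty}$.

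The argument has no serious obstacle; the only point requiring care is the uniform boundedness of $(k+1)^\gamma q^{2k}$ in part (2), which is where the strict inequality $\|A\|<1$ is used in an essential way, in contrast to the borderline case $\|A\|=1$ of part (1) that only preserves a fixed order of differentiability. One could even identify $C_{\gamma,q}$ explicitly by maximizing $(k+1)^\gamma q^{2k}$ over $k$, but since only finiteness is needed, invoking the decay $(k+1)^\gamma q^{2k}\to 0$ suffices.
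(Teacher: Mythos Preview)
Your proof is correct and follows exactly the approach the paper intends: the paper itself offers no proof beyond stating that the proposition is ``straightforward'' from the inequality $\big\| I_k(a_k^A)\big\|_2^2 \leqslant \|A\|^{2k}\,\big\| I_k(a_k)\big\|_2^2$, and you have simply written out the two-line verification in each case. Your identification of the boundedness of $(k+1)^\gamma q^{2k}$ as the place where the strict contraction $\|A\|<1$ enters is precisely the point.
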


$\;$\\
\noindent Take a closed subspace $F$ of $ H=L^2\Big( [0,1]; \R^d\Big) $ and associate with it the following sigma-field
$$ \mathcal{A}_F=\sigma \left\lbrace \int_0^1 h_j(t)dw_j(t),\,1\leqslant j\leqslant d,\; h=(h_1,\cdots,h_d)\in F  \right\rbrace. $$
\noindent Let $P$ be the operator of orthogonal projection onto $F$.\\

\begin{prop}\label{propp2}\cite{AADsmoothing} If $\eta \in L^2\big( \Omega, \sigma(w),\mathbb{P}\big)  $, then 
$$
\mathbb{E}\left[\eta \big| \mathcal{A}_F  \right] = \Gamma(P)\eta .
$$
\end{prop}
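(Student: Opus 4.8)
The plan is to exploit the fact that both sides of the claimed identity are bounded linear operators on $L^2\big(\Omega,\sigma(w),\PP\big)$: the conditional expectation $\E[\,\cdot\,|\mathcal{A}_F]$ is an $L^2$-contraction, and $\Gamma(P)$ has operator norm at most $1$ because $\|P\|\leqslant 1$ (this is exactly the inequality $\|I_k(a_k^P)\|_2^2\leqslant \|P\|^{2k}\|I_k(a_k)\|_2^2$ quoted before Proposition \ref{propp1}). Two bounded operators that coincide on a set whose linear span is dense must coincide everywhere, so it will suffice to verify the formula on the family of stochastic exponentials
$$ \cE(h)=\exp\Big( \sum_{j=1}^d\int_0^1 h_j(t)\,dw_j(t)-\tfrac12\|h\|_H^2\Big),\qquad h=(h_1,\dots,h_d)\in H, $$
whose linear span is dense in $L^2\big(\Omega,\sigma(w),\PP\big)$.

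First I would record the chaos expansion $\cE(h)=\sum_{k=0}^\infty \frac{1}{k!}I_k\big(h^{\otimes k}\big)$ and compute the action of $\Gamma(P)$ on it. The symmetric multilinear form attached to the kernel $h^{\otimes k}$ is $A_k(f_1,\dots,f_k)=\prod_{i=1}^k\langle h,f_i\rangle_H$, so that, using $P^\ast=P$, one gets $A_k^P(f_1,\dots,f_k)=\prod_{i=1}^k\langle h,Pf_i\rangle_H=\prod_{i=1}^k\langle Ph,f_i\rangle_H$, which is precisely the form attached to $(Ph)^{\otimes k}$. Hence $a_k^P=(Ph)^{\otimes k}$ and therefore $\Gamma(P)\cE(h)=\sum_{k}\frac1{k!}I_k\big((Ph)^{\otimes k}\big)=\cE(Ph)$.

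Next I would compute $\E[\cE(h)|\mathcal{A}_F]$ and show it equals the same quantity. Writing $h=Ph+(I-P)h$ with $Ph\in F$ and $(I-P)h\in F^\perp$, orthogonality gives $\|h\|_H^2=\|Ph\|_H^2+\|(I-P)h\|_H^2$ and hence the factorization $\cE(h)=\cE(Ph)\,\cE\big((I-P)h\big)$. The factor $\cE(Ph)$ is $\mathcal{A}_F$-measurable. The crux is that the Gaussian families $\{\int_0^1 g\,dw : g\in F\}$ and $\{\int_0^1 g\,dw : g\in F^\perp\}$ are jointly Gaussian with vanishing cross-covariances (because $\langle g,g'\rangle_H=0$ whenever $g\in F$ and $g'\in F^\perp$), hence independent; consequently $\cE\big((I-P)h\big)$ is independent of $\mathcal{A}_F$. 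Since a stochastic exponential has expectation $1$, it follows that $\E[\cE(h)|\mathcal{A}_F]=\cE(Ph)\,\E\big[\cE((I-P)h)\big]=\cE(Ph)$.

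Combining the last two paragraphs gives $\Gamma(P)\cE(h)=\cE(Ph)=\E[\cE(h)|\mathcal{A}_F]$ for every $h\in H$, and the density argument then extends the identity to all $\eta\in L^2\big(\Omega,\sigma(w),\PP\big)$. The main obstacle I anticipate is the careful justification of the two standard-but-essential ingredients: the totality of the exponential vectors $\{\cE(h):h\in H\}$ in $L^2\big(\Omega,\sigma(w),\PP\big)$, and the passage from orthogonality in $H$ to genuine independence of the corresponding Gaussian sub-sigma-fields, which is what turns the conditional expectation into a clean product of an $\mathcal{A}_F$-measurable factor and a mean-one independent factor.
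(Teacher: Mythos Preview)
Your argument is correct. Note, however, that the paper does not supply its own proof of this proposition: it is quoted from \cite{AADsmoothing} and stated without demonstration, so there is nothing to compare your approach against in the present paper. Your route via the totality of the exponential vectors $\cE(h)$, the identity $\Gamma(P)\cE(h)=\cE(Ph)$ obtained from $P^\ast=P$, and the factorisation $\cE(h)=\cE(Ph)\cE((I-P)h)$ combined with independence of orthogonal Gaussian subspaces, is the standard and clean way to establish this fact; the two ingredients you flag as potential obstacles (density of the exponentials in $L^2$ and the passage from $H$-orthogonality to independence of the generated $\sigma$-fields) are indeed classical and can be cited, e.g., from \cite{Nualart}.
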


\noindent The next theorem follows directly from propositions \ref{propp1} and \ref{propp2}.\\
\newpage
\begin{thm} \label{thmm}$\,$ 
\begin{enumerate}
\item If $\eta \in \D^{2,n},\, n\in \N^*$ then $\mathbb{E}\left[\eta \big| \mathcal{A}_F  \right]\in \D^{2,n} $.
\item If $\eta \in \D^{2,+\infty}$ then $\mathbb{E}\left[\eta \big| \mathcal{A}_F  \right]\in \D^{2,+\infty} $.
\end{enumerate}
\end{thm}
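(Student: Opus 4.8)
The plan is to derive Theorem~\ref{thmm} as an immediate corollary of the two preceding results, Propositions~\ref{propp1} and \ref{propp2}, by composing them through the identification of conditional expectation with a second quantization operator. The key observation is that the orthogonal projection $P$ onto the closed subspace $F\subset H$ is itself a bounded linear operator with $\|P\|\leqslant 1$; in fact $\|P\|=1$ whenever $F\neq\{0\}$ and $P\neq 0$, since a projection fixes the vectors of its range. Thus $P$ is an admissible operator for the second quantization machinery, and Proposition~\ref{propp1} applies to $\Gamma(P)$.

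For part (1), I would start from $\eta\in\D^{2,n}$ with $n\in\N^*$. By Proposition~\ref{propp2} we have the identity $\E[\eta\,|\,\mathcal{A}_F]=\Gamma(P)\eta$, so it suffices to show $\Gamma(P)\eta\in\D^{2,n}$. Since $\|P\|=1$ (as $P$ is a nonzero orthogonal projection), this is precisely the content of the first item of Proposition~\ref{propp1}, which guarantees that $\Gamma(A)$ preserves $\D^{2,n}$ when $\|A\|=1$. For part (2), I would take $\eta\in\D^{2,+\infty}=\cap_{\alpha>0}\D^{2,\alpha}$. Again writing $\E[\eta\,|\,\mathcal{A}_F]=\Gamma(P)\eta$ via Proposition~\ref{propp2}, the claim reduces to $\Gamma(P)\eta\in\D^{2,+\infty}$. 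Here one invokes part (1) of the present theorem (or equivalently applies the first item of Proposition~\ref{propp1} for each fixed integer $n$), noting that membership in every $\D^{2,n}$ for $n\in\N^*$ together with the nesting $\D^{2,\beta}\subset\D^{2,\alpha}$ for $0<\alpha<\beta$ yields membership in $\D^{2,\alpha}$ for every $\alpha>0$, hence in the intersection $\D^{2,+\infty}$.

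The only subtlety to flag is whether to rely on item~(1) or item~(2) of Proposition~\ref{propp1}. Item~(2) concerns operators with $\|A\|<1$ strictly, which does not cover a nontrivial orthogonal projection; the relevant input for both parts of our theorem is therefore item~(1), the norm-one case. Consequently the degenerate situations $F=\{0\}$ (where $P=0$, $\|P\|=0<1$) or $F=H$ (where $P=\mathrm{Id}$) deserve a brief remark, but both are harmless: if $\|P\|<1$ one instead applies item~(2) directly, and if $P=\mathrm{Id}$ the conditional expectation is the identity and there is nothing to prove. The main conceptual obstacle is thus not computational but lies entirely in the two cited propositions, whose composition I would make explicit; the proof of Theorem~\ref{thmm} itself is a one-line concatenation once the norm bound $\|P\|\leqslant 1$ is recorded.
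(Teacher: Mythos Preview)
Your proposal is correct and matches the paper's approach exactly: the paper states that Theorem~\ref{thmm} ``follows directly from propositions~\ref{propp1} and~\ref{propp2}'', which is precisely the concatenation you spell out (identify $\E[\eta\mid\mathcal{A}_F]$ with $\Gamma(P)\eta$ via Proposition~\ref{propp2}, then apply Proposition~\ref{propp1} with $\|P\|\leqslant 1$). Your remarks on the degenerate cases $F=\{0\}$ and $F=H$ and on which item of Proposition~\ref{propp1} to invoke are appropriate clarifications, but the core argument is the same one-line composition the paper has in mind.
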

$\;$

\begin{thm} \label{prop9}
Let $\eta \in \D^{2,+\infty} $ be bounded. Then, for every $u\in \R^d\setminus \{0\}$,
\begin{equation} \label{integ.eq}
\int_{W_0^d  } \eta(\omega) \theta_{u} (d\omega) = \int_{\Delta_2  } \vp _{s,t}(u)p_{t-s}^d(u)dsdt 
\end{equation}
where $\vp _{s,t}$ is defined by 
$$ \vp _{s,t}\big(w(t)-w(s)\big)=\E \big(\eta\big | w(t)-w(s)\big) $$
and $p_{t-s}^d(u)$ is given by
$$ p_{t-s}^d(u)=\frac{1}{ (2\pi (t-s))^{d/2} } \exp \left( \frac{-\|u\|^2}{2(t-s)} \right). $$
Moreover, both sides in \eqref{integ.eq} are continuous with respect to the variable $u$.
\end{thm}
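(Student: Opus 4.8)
The plan is to unfold the definition of $\theta_u$, replace $\rho_2(u)$ by its defining approximations, and then identify the limit by a conditioning-plus-approximate-identity argument. By the very definition of $\theta_u$ one has $\int_{W_0^d}\eta\,d\theta_u=(\rho_2(u),\eta)$. Writing $\Phi_\ve(u)=\int_{\Delta_2}p^d_\ve(w(t)-w(s)-u)\,ds\,dt\in L^2$, the quoted theorem of \cite{IPA} gives $\Phi_\ve(u)\to\rho_2(u)$ in $\D^{2,\gamma}$ for some $\gamma<(4-d)/2$, while $\eta\in\D^{2,+\infty}\subset\D^{2,-\gamma}$. Since the pairing $(\cdot,\cdot)$ is continuous on $\D^{2,\gamma}\times\D^{2,-\gamma}$ and coincides with the $L^2$ inner product on $L^2\times L^2$, I would first establish
$$\int_{W_0^d}\eta\,d\theta_u=\lim_{\ve\to0}(\Phi_\ve(u),\eta)=\lim_{\ve\to0}\E\big[\eta\,\Phi_\ve(u)\big].$$
Because $\eta$ is bounded and $p^d_\ve\ge0$, Fubini's theorem moves the expectation inside the $ds\,dt$ integral, giving $\E[\eta\Phi_\ve(u)]=\int_{\Delta_2}\E[\eta\,p^d_\ve(\xi_{s,t}-u)]\,ds\,dt$ with $\xi_{s,t}=w(t)-w(s)$. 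Conditioning on $\xi_{s,t}$ and using that it is centred Gaussian with density $p^d_{t-s}$, the inner expectation becomes
$$\E\big[\vp_{s,t}(\xi_{s,t})\,p^d_\ve(\xi_{s,t}-u)\big]=\int_{\R^d}\vp_{s,t}(z)\,p^d_\ve(z-u)\,p^d_{t-s}(z)\,dz.$$

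The next step is to let $\ve\to0$ inside, where $p^d_\ve(\cdot-u)$ acts as an approximate identity concentrating at $u$. For this I need $z\mapsto\vp_{s,t}(z)p^d_{t-s}(z)$ to be continuous at $z=u$; since $\vp_{s,t}$ is bounded by $\|\eta\|_\infty$ and $p^d_{t-s}$ is smooth, it suffices to know that $\vp_{s,t}$ is continuous. This is exactly where Theorem \ref{thmm} enters: taking $F$ to be the (at most $d$-dimensional) subspace spanned by the functions $\1_{[s,t]}e_j$, one has $\mathcal{A}_F=\sigma(\xi_{s,t})$ and $\E[\eta\mid\mathcal{A}_F]=\vp_{s,t}(\xi_{s,t})$, so Theorem \ref{thmm}(2) gives $\vp_{s,t}(\xi_{s,t})\in\D^{2,+\infty}$. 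A $\D^{2,+\infty}$-functional measurable with respect to the finite-dimensional Gaussian vector $\xi_{s,t}$ is a smooth function of that vector: all of its Gaussian Sobolev derivatives lie in $L^2(\gamma_d)=L^2_{\mathrm{loc}}$, which by local Sobolev embedding forces a $C^\infty$ version. Hence $\vp_{s,t}\in C^\infty(\R^d)$, the approximate-identity limit is valid, and $\lim_{\ve\to0}\E[\eta\,p^d_\ve(\xi_{s,t}-u)]=\vp_{s,t}(u)\,p^d_{t-s}(u)$.

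It then remains to interchange $\lim_\ve$ with $\int_{\Delta_2}$ and to settle continuity in $u$; both are dominated-convergence bookkeeping. A uniform bound is available from $|\E[\eta\,p^d_\ve(\xi_{s,t}-u)]|\le\|\eta\|_\infty\int p^d_\ve(z-u)p^d_{t-s}(z)\,dz=\|\eta\|_\infty\,p^d_{t-s+\ve}(u)\le\|\eta\|_\infty\sup_{a>0}p^d_a(u)$, and for fixed $u\neq0$ this supremum is a finite constant, integrable over the finite-measure set $\Delta_2$; this yields \eqref{integ.eq}. For continuity, I would fix a compact $K\subset\R^d\setminus\{0\}$: on $K$ the integrand $\vp_{s,t}(u)p^d_{t-s}(u)$ is continuous in $u$ for each $(s,t)$ and dominated by $\|\eta\|_\infty\sup_{a\le1,\,v\in K}p^d_a(v)<\infty$ (finite because $K$ is bounded away from $0$), so dominated convergence makes the right-hand side continuous in $u$, and hence so is the left-hand side.

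The main obstacle is the regularity input in the second paragraph: converting the abstract statement that conditional expectation preserves $\D^{2,+\infty}$ (Theorem \ref{thmm}) into the concrete continuity of the regression function $\vp_{s,t}$ that is needed to run the approximate identity. Everything else — the passage to the defining approximations, the two applications of Fubini, and the two dominated-convergence arguments — is routine once the finite, $u$-dependent bound on $p^d_{t-s+\ve}(u)$ is in hand.
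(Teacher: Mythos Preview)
Your proof is correct and follows essentially the same route as the paper: pass to the approximating functionals $\Phi_\ve(u)$, use Fubini and conditioning on $w(t)-w(s)$, invoke Theorem~\ref{thmm} together with Sobolev embedding to obtain continuity of $\vp_{s,t}$, and then run an approximate-identity plus dominated-convergence argument. The only cosmetic difference is the order of the last two steps: the paper first swaps $\int_{\R^d}$ and $\int_{\Delta_2}$ to form $\psi(x)=\int_{\Delta_2}\vp_{s,t}(x)p^d_{t-s}(x)\,ds\,dt$, shows $\psi$ is continuous, and then applies the approximate identity once, whereas you take the $\ve\to0$ limit pointwise in $(s,t)$ and then exchange $\lim_\ve$ with $\int_{\Delta_2}$ using the neat uniform bound $p^d_{t-s+\ve}(u)\le\sup_{a>0}p^d_a(u)<\infty$.
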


\begin{proof}
From the definition of the distribution $\rho_2(u)$, the pairing with the test function $\eta$ is given by :
\begin{align*}
\int_{W_0^d} \eta(\omega )\,\theta_{u} (d\omega)
&=(\rho_2(u), \eta)\\
&=\lim_{\varepsilon \to 0} \E \eta\int_{\Delta_2  }  p^d_{\varepsilon}\big( w(t)-w(s)-u\big)dsdt \\
&=\lim_{\varepsilon \to 0} \int_{\Delta_2  }  \E \Big( \eta p^d_{\varepsilon}\big( w(t)-w(s)-u\big)\Big) dsdt \\
&=\lim_{\varepsilon \to 0} \int_{\Delta_2  }  \E \, \E \Big [ \eta p^d_{\varepsilon}\big( w(t)-w(s)-u\big)\Big | w(t)-w(s)\Big ]  dsdt \\
&=\lim_{\varepsilon \to 0} \int_{\Delta_2  }  \E \Big [ p^d_{\varepsilon}\big( w(t)-w(s)-u\big) \vp _{s,t}\big(w(t)-w(s)\big)\Big ]  dsdt \\
&=\lim_{\varepsilon \to 0} \int_{\Delta_2  }  \int_{\R^d  } p^d_{\varepsilon}(x-u) \vp _{s,t}(x) p^d_{t-s}(x)dx\, dsdt \\
&=\lim_{\varepsilon \to 0} \int_{\R^d  } p^d_{\varepsilon}(x-u)  \int_{\Delta_2  }  \vp _{s,t}(x)p^d_{t-s}(x) dsdt \,dx\,.
\end{align*} 
Consider the function 
$$\psi(x)=\int_{\Delta_2  }  \vp _{s,t}(x)p^d_{t-s}(x) dsdt,\; x\in \R^d \setminus \{0\}.$$
Since $\eta \in \D^{+\infty} $ then, by theorem \ref{thmm}, 
$$\vp _{s,t}\big(w(t)-w(s)\big)=\mathbb{E}\left[\eta \big| w(t)-w(s)  \right]\in \D^{2,+\infty}\,. $$
Consequently, 
$$\vp _{s,t}\in \D^{2,+\infty}\Big( \R^d,p_{t-s}^d \Big)\,. $$
Using one of the Sobolev embedding theorems \cite{Adams} we deduce that $\vp _{s,t}(.)$ is continuous. And since $\eta$ is bounded then $\vp _{s,t}(.)$ is bounded independently from $(s,t)$. Therefore, for any $r>0$,
$$ \forall \; x\in \R^d,\; \|x\|\geqslant r \Rightarrow \;  \big | \vp _{s,t}(x)p^d_{t-s}(x)\big |  \leqslant \const \times \dfrac{\exp \Big( -\dfrac{r^2}{2(t-s)} \Big)  }{\big( 2\pi (t-s)\big)^{d/2}} \; \in \, L^1 \big( \Delta_2 \big).$$ 
Now we can use the dominated convergence theorem and conclude that $\psi$ is continuous in $\R^d \setminus \{0\}$. Consequently,
$$\lim_{\varepsilon \to 0} \int_{\R^d  } p^d_{\varepsilon}(x-u)  \psi (x)dx=\psi (u)\,,$$
which finishes the proof.
\end{proof}
$\,$\\
\noindent Using theorem \ref{prop9} and proposition \ref{det.est.} we deduce :\\

\begin{corollary} \label{Cor}
When $u\to 0$, we have
\begin{equation}\label{eq0}
\theta_{u} \Big( W_0^d  \Big)= \int_{\Delta_2  } p_{t-s}^d(u)dsdt  =: m(u,d)\sim   \dfrac{c(d)  }{  \|u\|^{d-2}   }
\end{equation}
where $c(d)$ is a constant that depends only on $d$.
\end{corollary}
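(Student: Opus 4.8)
The plan is to read off the identity $\theta_u(W_0^d) = \int_{\Delta_2} p_{t-s}^d(u)\,ds\,dt$ as the special case $\eta \equiv 1$ of the integration formula in Theorem \ref{prop9}, and then to obtain the stated asymptotics by invoking Proposition \ref{det.est.} at the parameter value $\alpha = 0$.

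First I would check that the constant function $\eta \equiv 1$ is an admissible test function for Theorem \ref{prop9}. It belongs to the zeroth Wiener chaos, $\eta = I_0(1)$, so $\big\|\eta\big\|_{2,\gamma}^2 = 1$ for every $\gamma$; hence $\eta \in \D^{2,+\infty}$ and $\eta$ is obviously bounded. The hypotheses of Theorem \ref{prop9} are therefore satisfied. Next I would carry out the conditional-expectation step of \eqref{integ.eq} for this choice. Since $\E\big(1 \mid w(t)-w(s)\big) = 1$, the function $\vp_{s,t}$ is identically equal to $1$, and \eqref{integ.eq} collapses to
$$\theta_u\big(W_0^d\big) = \int_{W_0^d} 1 \cdot \theta_u(d\omega) = \int_{\Delta_2} p_{t-s}^d(u)\,ds\,dt =: m(u,d),$$
which is the first asserted equality.

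Finally I would extract the asymptotics. Taking $\alpha = 0$ in Proposition \ref{det.est.} and noting that for $d \geqslant 4$ one has $0 > 1 - d/2$, the first branch of that proposition applies and gives, as $u \to 0$,
$$m(u,d) \sim \frac{2^{-1}\,\Gamma(d/2 - 1)}{\pi^{d/2}\,\|u\|^{d-2}},$$
so that the conclusion holds with $c(d) = \Gamma(d/2-1)/(2\pi^{d/2})$.

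The argument is short because both ingredients are already in hand; the only point demanding attention is the verification that the integration formula \eqref{integ.eq}, established for bounded $\eta \in \D^{2,+\infty}$, legitimately applies to the constant test function and that the conditional expectation indeed collapses $\vp_{s,t}$ to the constant $1$. Once that is confirmed, the remainder is a direct substitution into Proposition \ref{det.est.}, so I do not anticipate a substantive obstacle.
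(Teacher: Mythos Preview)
Your proposal is correct and follows exactly the route the paper indicates: the paper's justification is the single line ``Using theorem \ref{prop9} and proposition \ref{det.est.} we deduce'', and you have spelled out precisely those two steps (taking $\eta\equiv 1$ in \eqref{integ.eq}, then substituting $\alpha=0$ in Proposition \ref{det.est.}). Your explicit identification of the constant $c(d)=\Gamma(d/2-1)/(2\pi^{d/2})$ is a harmless bonus.
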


$\;$\\
\noindent The last part of this section is devoted to estimating the capacity of the support of the measure $\theta_u$, denoted by $\Supp (\theta_u)$. We begin by recalling the definition of the capacity. The estimate \eqref{finalest} is based on theorem \ref{thma}.\\

\begin{defn}\cite{Sugita}
Let $r >0$. \\
If $O\subset W_0^d$ is open then its $(2,r)$-capacity is defined by :
$$ C_2^{r} (O)=\inf \left\lbrace  \|f\|_{2,r}^2\, ; \, f \in \D^{2,r},\, f\geqslant \mathbf{1}_{O} \right\rbrace . $$
For arbitrary $A\subset W_0^d$, the $(2,r)$-capacity is defined by :
$$ C_2^{r} (A)=\inf \left\lbrace  C_2^{r} (O)\, ; \, A\subset O,\, O \textrm{ is open} \right\rbrace . $$
We say that a subset $A$ of $W_0^d$ is a slim set if $C_p^{r} (A)=0 \quad \forall \; p>1,\; \forall \; r>0$. Here, $C_p^{r}$ is defined as above but with $p$ instead of $2$.
\end{defn}
$\;$
\begin{thm}\cite{Sugita}\label{thma}
Let $r>0$. Let $\Phi \in \D^{2,-r} $ be positive and $\nu_{\Phi}$ its corresponding measure. Then,
$$ \nu_{\Phi}(A)\leqslant \| \Phi \|_{2,-r} \,  \sqrt{ C_2^{r} (A)  } $$
for any $A\in \mathcal{B}(W_0^d)$. In particular, a measure corresponding to a positive generalized Wiener function never has its mass in slim sets.\\  
\end{thm}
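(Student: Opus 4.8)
The plan is to combine the integral representation of $\nu_{\Phi}$ from Theorem \ref{suta} with the elementary duality inequality between $\D^{2,-r}$ and $\D^{2,r}$, after reducing the statement to open sets. First I record the duality bound: writing $\Phi=\sum_k I_k(\phi_k)$ and $f=\sum_k I_k(f_k)$, the pairing is $(\Phi,f)=\sum_k \E\big[I_k(\phi_k)I_k(f_k)\big]$, and inserting the weights $(k+1)^{\pm r/2}$ before applying the Cauchy--Schwarz inequality (first termwise, then over $k$) gives
$$ \big|(\Phi,f)\big|\leqslant \|\Phi\|_{2,-r}\,\|f\|_{2,r},\qquad \Phi\in\D^{2,-r},\ f\in\D^{2,r}. $$

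Next I reduce to open sets. It suffices to prove
$$ \nu_{\Phi}(O)\leqslant \|\Phi\|_{2,-r}\,\sqrt{C_2^{r}(O)} \qquad\text{for every open } O\subset W_0^d. $$
Indeed, for an arbitrary Borel set $A$ and any open $O\supseteq A$ we have $\nu_{\Phi}(A)\leqslant\nu_{\Phi}(O)$ since $\nu_{\Phi}$ is a finite measure; combining this with the displayed bound for $O$ and taking the infimum over all open $O\supseteq A$ in the definition $C_2^{r}(A)=\inf\{C_2^{r}(O):A\subset O\ \text{open}\}$ yields $\nu_{\Phi}(A)\leqslant\|\Phi\|_{2,-r}\sqrt{C_2^{r}(A)}$. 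The final assertion is then immediate: a slim set $A$ satisfies $C_2^{r}(A)=0$ for all $r>0$, so $\nu_{\Phi}(A)=0$.

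To establish the bound for open $O$, I fix an admissible competitor $f\in\D^{2,r}$ with $f\geqslant\1_O$ and aim for $\nu_{\Phi}(O)\leqslant(\Phi,f)$; combined with the duality bound this gives $\nu_{\Phi}(O)\leqslant\|\Phi\|_{2,-r}\|f\|_{2,r}$, and taking the infimum over admissible $f$ replaces $\|f\|_{2,r}$ by $\sqrt{C_2^{r}(O)}$. If $f$ is regular enough to admit a genuine continuous representative (for instance a smooth cylinder functional in $\D^{2,+\infty}$), then $f\geqslant\1_O$ holds everywhere and the inequality is transparent from Theorem \ref{suta}: $(\Phi,f)=\int_{W_0^d}f\,d\nu_{\Phi}\geqslant\int_{W_0^d}\1_O\,d\nu_{\Phi}=\nu_{\Phi}(O)$. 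For a general $f\in\D^{2,r}$ I would regularize by the second quantization operator: by Proposition \ref{propp1} the functions $f_{\lambda}:=\Gamma(\lambda I)f$ lie in $\D^{2,+\infty}$ for $0<\lambda<1$, and $f_{\lambda}\to f$ in $\D^{2,r}$ as $\lambda\uparrow 1$, so $(\Phi,f_{\lambda})\to(\Phi,f)$ by continuity of the pairing; the remaining task is to pass to the limit on the measure side and deduce $\nu_{\Phi}(O)\leqslant\lim_{\lambda}\int f_{\lambda}\,d\nu_{\Phi}$.

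This last passage is the main obstacle, and it stems from the singularity of $\nu_{\Phi}$ relative to the Wiener measure $\mu$: the inequality $f\geqslant\1_O$ concerns a pointwise representative, whereas $\int f\,d\nu_{\Phi}$ depends on the chosen representative because $\nu_{\Phi}$ need not be absolutely continuous with respect to $\mu$ (note that $\Gamma(\lambda I)$ does not fix $\1_O$, so the $f_{\lambda}$ are not themselves admissible). The resolution is the quasi-sure calculus attached to the capacities $C_2^{r}$: each element of $\D^{2,r}$ has a $(2,r)$-quasi-continuous modification, unique up to sets of zero $(2,r)$-capacity, and convergence in $\D^{2,r}$ forces quasi-uniform convergence of these modifications along a subsequence through a Borel--Cantelli estimate on the capacities of the exceptional sets. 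The delicate structural fact one must secure --- and the point where circularity with the conclusion has to be avoided --- is that $\nu_{\Phi}$ charges no set of zero $(2,r)$-capacity and integrates quasi-continuous representatives consistently; I would obtain this by first proving the open-set bound over the dense subclass of functions in $\D^{2,+\infty}$ admitting genuine continuous versions (for which Theorem \ref{suta} and Lemma \ref{lema} apply directly) and then bootstrapping to all of $\D^{2,r}$ via the approximants $f_{\lambda}$. Once the representation is known to respect quasi-continuous versions, the inequality $\nu_{\Phi}(O)\leqslant(\Phi,f)$ holds with $f\geqslant\1_O$ interpreted quasi-everywhere, which completes the argument.
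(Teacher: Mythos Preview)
The paper does not prove this theorem; it is quoted verbatim from Sugita \cite{Sugita} and used as a black box to derive the subsequent lower bound on $C_2^{-\gamma}\big(\Supp\theta_u\big)$. There is therefore no ``paper's own proof'' to compare against.

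Your sketch follows the standard architecture of Sugita's argument: the duality inequality $|(\Phi,f)|\leqslant\|\Phi\|_{2,-r}\|f\|_{2,r}$, reduction to open sets, and the need to interpret $(\Phi,f)$ as $\int f\,d\nu_{\Phi}$ for a suitable representative of $f$. You also correctly isolate the genuine difficulty, namely that the pairing with a general $f\in\D^{2,r}$ is not a priori an integral against $\nu_{\Phi}$, and that resolving this requires the quasi-sure calculus (quasi-continuous modifications, and the fact that $\nu_{\Phi}$ does not charge polar sets).

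That said, what you have written is an outline rather than a proof. The bootstrap you propose --- prove the bound first over a dense class of smooth functionals and then pass to the limit via $\Gamma(\lambda I)$ --- leaves the hardest step as an assertion: you need that the infimum defining $C_2^r(O)$ can be approximated by competitors whose quasi-continuous versions genuinely dominate $\1_O$ $\nu_{\Phi}$-a.e., and this is precisely where Sugita invests the technical work (construction of $(p,r)$-quasi-continuous redefinitions and the equilibrium potential). Your regularizations $f_\lambda=\Gamma(\lambda I)f$ are not themselves admissible (as you note), and the limiting inequality $\nu_{\Phi}(O)\leqslant\lim_\lambda\int f_\lambda\,d\nu_{\Phi}$ already presupposes a compatibility between $\nu_{\Phi}$ and quasi-continuous representatives that is logically downstream of what you are trying to establish. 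So the circularity you flag is real, and your proposed resolution would need to be carried out in full --- essentially reproducing the relevant sections of \cite{Sugita} --- before the argument is complete.
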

$\;$
\begin{thm}
Let $u\in \R^d,\,u\neq 0$ and $\gamma < \frac{4-d}{2}$ . Then the $(2,-\gamma)$-capacity of the support of the measure $\theta_u$ satisfies the inequality 
\begin{equation} \label{finalest}
C_2^{-\gamma}\Big ( \Supp \theta_u \Big) \geqslant c \|u\|^4,\, u\to 0
\end{equation} 
for some constant $c=c(d,\gamma) >0$.
\end{thm}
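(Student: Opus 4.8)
The plan is to read the lower bound off Theorem~\ref{thma}. Because $d\geqslant4$ forces $\gamma<(4-d)/2\leqslant0$, the index $r:=-\gamma$ is positive and $\rho_2(u)\in\D^{2,-r}$ by \cite{IPA}, with corresponding measure $\theta_u$. Applying Theorem~\ref{thma} with $A=\Supp\theta_u$ and using that $\theta_u$ lives on its support, so that $\theta_u(\Supp\theta_u)=\theta_u(W_0^d)=m(u,d)$, gives
$$ m(u,d)\leqslant\|\rho_2(u)\|_{2,\gamma}\,\sqrt{C_2^{-\gamma}(\Supp\theta_u)},\qquad\text{i.e.}\qquad C_2^{-\gamma}(\Supp\theta_u)\geqslant\frac{m(u,d)^2}{\|\rho_2(u)\|_{2,\gamma}^2}. $$
By Corollary~\ref{Cor}, $m(u,d)^2\sim c(d)^2\|u\|^{4-2d}$ as $u\to0$, so the whole problem reduces to the upper bound $\|\rho_2(u)\|_{2,\gamma}^2\leqslant C\|u\|^{-2d}$: inserting it into the ratio yields $C_2^{-\gamma}(\Supp\theta_u)\gtrsim\|u\|^{4-2d}\cdot\|u\|^{2d}=\|u\|^4$, which is precisely \eqref{finalest}.

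So everything hinges on estimating the Sobolev norm from above. Writing $\|\rho_2(u)\|_{2,\gamma}^2=\sum_{k\geqslant0}(k+1)^\gamma\,\E I_k(f_k^u)^2$ with the kernels read off from \eqref{9}, I would first compute each chaos norm explicitly. The increments $\frac{w_j(t)-w_j(s)}{\sqrt{t-s}}$ and $\frac{w_j(t')-w_j(s')}{\sqrt{t'-s'}}$ are standard Gaussians with common correlation $\rho=\rho(s,t,s',t')=\frac{|[s,t]\cap[s',t']|}{\sqrt{(t-s)(t'-s')}}$, so the orthogonality relation $\E[H_n(X)H_m(Y)]=\delta_{nm}\,n!\,\rho^n$ (for the probabilists' polynomials of \eqref{80}) cancels the normalisations $1/\sqrt{n_j!}$ and yields
$$ \E I_k(f_k^u)^2=\sum_{n_1+\dots+n_d=k}\int_{\Delta_2}\int_{\Delta_2}\rho^k\prod_{j=1}^d H_{n_j}\Big(\tfrac{u_j}{\sqrt{t-s}}\Big)H_{n_j}\Big(\tfrac{u_j}{\sqrt{t'-s'}}\Big)p_{t-s}^d(u)\,p_{t'-s'}^d(u)\,ds\,dt\,ds'\,dt'. $$

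To pull out the power of $\|u\|$ I would rescale $t-s=\|u\|^2\tau$, $t'-s'=\|u\|^2\tau'$ and translate the interval positions. Under this change the two densities produce the factor $\|u\|^{-2d}$, while the Hermite arguments become $u_j/\sqrt{t-s}=\hat u_j/\sqrt\tau$ and the correlation $\rho$ becomes scale-free, so that the remaining integral depends on $u$ only through the direction $\hat u=u/\|u\|$; collecting the Jacobian and the bounded range of the interval mid-points shows that $\|\rho_2(u)\|_{2,\gamma}^2$ equals $\|u\|^{-2d}$ (in fact $\|u\|^{4-2d}$, the leading contribution being the zeroth chaos $I_0(f_0^u)=m(u,d)$) times a series in $k$ that is uniform in $\hat u$. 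To keep this series integrable in the time variables I would use Proposition~\ref{Szego} with $\alpha=1/4$, which bounds $\prod_j|H_{n_j}(u_j/\sqrt{t-s})|$ by $c^d\big(\prod_j\sqrt{n_j!}\,n_j^{-1/12}\big)e^{\|u\|^2/(4(t-s))}$, the exponential being absorbed into half of the Gaussian so that $e^{\|u\|^2/(4(t-s))}p_{t-s}^d(u)=p_{t-s}^d(u/\sqrt2)$ still decays.

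The main obstacle is the summation over $k$. One must resist the temptation to bound $\rho^k\leqslant1$: doing so detaches the correlation from the Hermite polynomials and leaves the combinatorial sum $\sum_{|n|=k}\prod_j n_j!$, which grows like $k!$ and makes the $(k+1)^\gamma$-weighted series diverge for every fixed $u$. The finiteness of $\|\rho_2(u)\|_{2,\gamma}^2$ for $\gamma<(4-d)/2$ --- known from \cite{IPA} --- comes instead from the cancellations inside the bilinear Hermite sum $\sum_{|n|=k}\rho^k\prod_j H_{n_j}(a_j)H_{n_j}(b_j)$ once it is integrated against the Gaussian weights, a Mehler-type object whose coefficients decay after integration even though its pointwise generating function diverges. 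I therefore expect the crux to be a uniform-in-$\hat u$ estimate showing that this weighted $k$-series stays bounded as $u\to0$, which would give $\|\rho_2(u)\|_{2,\gamma}^2=O(\|u\|^{-2d})$ and close the argument; any sharpening toward the heuristic $\|u\|^{4-2d}$ would merely improve the exponent in \eqref{finalest}.
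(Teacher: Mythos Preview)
Your skeleton --- apply Theorem~\ref{thma} with $A=\Supp\theta_u$, insert $\theta_u(\Supp\theta_u)=m(u,d)\sim c(d)\|u\|^{2-d}$ from Corollary~\ref{Cor}, and reduce everything to the bound $\|\rho_2(u)\|_{2,\gamma}^2\leqslant C\|u\|^{-2d}$ --- is exactly the paper's. The gap is that you leave this norm bound as a heuristic, and the route you sketch toward it (rescaling, Mehler-type cancellations) is not what actually closes it.

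The paper uses no rescaling and no cancellations; only absolute-value bounds, all taken from \cite{IPA}. Four ingredients: (i) Proposition~\ref{Szego} with $\alpha\in(1/4,1/2)$ chosen so that $\gamma+d\big(1-\tfrac{8\alpha-1}{6}\big)-3<-1$; this forces $\alpha\to1/2$ as $\gamma\uparrow(4-d)/2$, and your fixed $\alpha=1/4$ yields convergence only for $\gamma<2-5d/6$, strictly narrower than $(4-d)/2$. (ii) The combinatorial bound $\sum_{|n|=k}\prod_i(n_i\vee1)^{-(8\alpha-1)/6}\leqslant c_2(k\vee1)^{d(1-(8\alpha-1)/6)-1}$. (iii) The pointwise maximum $\tfrac{1}{(2\pi x)^{d/2}}\exp\!\big(-(1/2-\alpha)\|u\|^2/x\big)\leqslant c_3\|u\|^{-d}$ over $x>0$, applied to each factor $p_{t_j-s_j}^d(u)$ after the Szeg\H{o} exponential has been absorbed; this is where the $\|u\|^{-2d}$ comes from. (iv) The decisive piece you did not locate:
\[
\int_{\Delta_2\times\Delta_2}\Bigg(\frac{\lambda([s_1,t_1]\cap[s_2,t_2])}{\sqrt{(t_1-s_1)(t_2-s_2)}}\Bigg)^{\!k}ds_1dt_1ds_2dt_2\;\leqslant\;c_4\,(k\vee1)^{-2}.
\]
This is how one avoids the crude $\rho^k\leqslant1$: one \emph{integrates} $\rho^k$ and extracts two powers of $k^{-1}$ from the smallness of the overlap ratio. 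Note also that once Szeg\H{o} is applied, each normalised factor $\frac{1}{\sqrt{n_i!}}|H_{n_i}(\cdot)|$ is bounded by $(n_i\vee1)^{-(8\alpha-1)/12}$, so the multi-index sum is only polynomial in $k$ --- your feared $\sum_{|n|=k}\prod_j n_j!$ never appears. The obstruction to setting $\rho^k\leqslant1$ is merely an insufficient polynomial decay, and (iv) supplies exactly the missing $k^{-2}$. Putting (i)--(iv) together gives the summable majorant $(k+1)^{\gamma}(k\vee1)^{d(1-(8\alpha-1)/6)-3}$ and hence $\|\rho_2(u)\|_{2,\gamma}^2\leqslant c_6\|u\|^{-2d}$, which finishes the proof.
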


\begin{proof} Applying theorem \ref{thma} with $\Phi =\rho_2(u), \nu_{\Phi}=\theta_u, r=-\gamma$ and $A=\Supp \theta _u$ we obtain 
$$  \theta_{u}\Big ( \Supp \theta_u \Big) ^2 \leqslant \Big \| \rho _2(u) \Big \|_{2,\gamma} ^2\,  C_2^{-\gamma} \Big ( \Supp \theta_u \Big).$$
From \cite{IPA} we have 
\begin{align*}
\Big \| \rho _2(u) \Big \|_{2,\gamma}^2
&=\sum_{k\geqslant 0} (k+1)^{\gamma} \int _{\Delta_2\times \Delta_2} \dfrac{\lmd \Big( [s_1,t_1]\cap[s_2,t_2]\Big)^k }{ \Big( (t_1-s_1)(t_2-s_2)\Big)^{k/2} } \times\\
& \quad \times \sum _{n_1+\cdots+n_d=k} \prod _{1\leqslant i\leqslant d} \prod _{1\leqslant j\leqslant 2} \dfrac{1}{ \sqrt{n_i!} }\,H_{n_i}\Big( \dfrac{u_i}{\sqrt{ t_j-s_j} } \Big)\, \prod _{1\leqslant j\leqslant 2} p^d_{t_j-s_j}(u) ds_1dt_1ds_2dt_2 
\end{align*} 
Let $\alpha =\alpha (d,\gamma) \in \big ]1/4, 1/2 \big [$ such that $\gamma +d\Big( 1-\dfrac{8\alpha -1}{6} \Big) -3<-1$.\\
From \cite{IPA} we get the following estimations : 
\begin{align*}
& \exists \, c_1=c_1(d,\gamma)\, ; \; \forall \; n\in \N,\, \forall \,x\in \R\; ; \dfrac{|H_n(x)|}{ \sqrt{n!} }\leqslant c_1 (n\vee 1)^{ -\frac{8\alpha -1}{12}} e^{\alpha x^2}, \quad ( n\vee 1=\max \{n,1\}), \\
& \exists \, c_2=c_2(d,\gamma)\, ; \; \forall \; k\in \N\, ; \sum _{n_1+\cdots+n_d=k} \prod _{1\leqslant i\leqslant d} (n_i\vee 1)^{ -\frac{8\alpha -1}{6}} \leqslant c_2 (k\vee 1)^{ d\Big( 1-\frac{8\alpha -1}{6} \Big) -1},  \\
& \exists \, c_3=c_3(d,\gamma)\, ; \; \forall \; x>0\, ;  \dfrac{1}{(2\pi x)^{d/2}} \, \exp \Big( -\big( \frac{1}{2}-\alpha \big) \frac{\|u\|^2}{x}  \Big) \leqslant \dfrac{c_3}{\|u\|^d} \;,   \\
& \exists \, c_4\, ; \; \forall \; k\in \N\, ; \int _{\Delta_2\times \Delta_2} \dfrac{\lmd \Big( [s_1,t_1]\cap[s_2,t_2]\Big)^k }{ \Big( (t_1-s_1)(t_2-s_2)\Big)^{k/2} } ds_1dt_1ds_2dt_2 \leqslant c_4 (k\vee 1) ^{-2}    \,.
\end{align*}
Using these inequalities we deduce the existence of constants $c_5=c_5(d,\gamma),\, c_6=c_6(d,\gamma) $ such that 
$$\Big \| \rho _2(u) \Big \|_{2,\gamma}^2 \leqslant \dfrac{c_5}{ \|u\|^{2d} }\sum_{k\geqslant 0} (k+1)^{\gamma}  (k\vee 1)^{ d\big( 1-\frac{8\alpha -1}{6} \big) -3}\leqslant \dfrac{c_6}{ \|u\|^{2d} }\,.$$
Consequently, 
$$  \theta_{u}\Big ( \Supp \theta_u \Big) ^2 \leqslant \dfrac{c_6}{ \|u\|^{2d} }\,  C_2^{-\gamma} \Big ( \Supp \theta_u \Big).$$
From another side, using corollary \ref{Cor}, we obtain  
$$ \theta_{u}\Big ( \Supp \theta_u \Big) \sim   \dfrac{c_7  }{  \|u\|^{d-2}   }\,,\; u\to 0, $$
for some constant $c_7=c_7(d)$, which finishes the proof.
\end{proof}

\section{Estimation of the quadratic Wasserstein distance}
In this section we will try to describe the behavior of the family of measures $\theta_u,\, u \in \R^d, u\neq 0, d\geqslant 4$, obtained in section 2 in relation with the $d$-dimensional Brownian motion $W(t)$, in terms of the quadratic Wasserstein distance between $\theta_u$ and the Wiener measure $\mu$. Let us first recall the definition of quadratic Wasserstein distance.\\

\begin{defn}\cite{CV}
Let $(M,d)$ be a metric space equipped with its Borel sigma-field. Let $\nu_1,\nu_2$ be two probability measures in $M$ which have finite second moment, i.e. 
$$ \int_M d^2(x,y)\nu_i(dy) <\infty,\; i=1,2,\; \text{ for some (and so for all ) } x\in M.  $$
The quadratic Wasserstein distance between $\nu_1$ and $\nu_2$ is then defined by : 
$$\mathcal{W}_2^2 (\nu_1,\nu_2)=\inf_{\nu}  \int_M \int_M d^2(x,y)\nu (dx,dy) $$
where the infimum is taken over all couplings $\nu$ of $\nu_1$ and $\nu_2$, i.e. all probability measures $\nu$ on $M\times M$ with marginals $\nu_1$ and $\nu _2$.
\end{defn}

\noindent Before we start estimating the quadratic Wasserstein distance between $\theta_u$ and the Wiener measure $\mu$, let us check that they have finite second moment. For the Wiener measure, we will use the Fernique theorem.

\begin{thm}\cite{Fernique}
Let $\nu$ be a Gaussian probability measure on some Banach space $(X, \| .\| )$. Then, there exists $\alpha >0$ such that 
$$ \int _X e^{\alpha \| x\| ^2} \nu (dx) <\infty.  $$
\end{thm}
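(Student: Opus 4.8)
The plan is to prove Fernique's classical tail estimate by the symmetrization (rotation) argument. I would treat the centered case, which is the one relevant here since the Wiener measure is centered; for a general Gaussian measure one first subtracts the mean $m$ and uses $\|x\|^2\leqslant 2\|x-m\|^2+2\|m\|^2$, so it suffices to assume $\nu$ symmetric. Let $X$ and $Y$ be independent random variables, each with law $\nu$, realized on a product space. The starting point is the rotational invariance of Gaussian laws: the pair $\big((X-Y)/\sqrt2,\,(X+Y)/\sqrt2\big)$ has the same joint distribution as $(X,Y)$. In particular $U:=(X-Y)/\sqrt2$ and $V:=(X+Y)/\sqrt2$ are again independent, each distributed according to $\nu$.

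Next I would extract a self-improving recursive tail inequality. Fix $0\leqslant s\leqslant t$. On the event $\{\|X\|\leqslant s,\ \|Y\|>t\}$ the triangle inequality gives $\|U\|\geqslant(\|Y\|-\|X\|)/\sqrt2>(t-s)/\sqrt2$ and likewise $\|V\|>(t-s)/\sqrt2$. Writing $a=(t-s)/\sqrt2$ and using independence on both sides together with the invariance just stated, this yields
$$ \nu(\|x\|\leqslant s)\,\nu(\|x\|>t)\leqslant \nu(\|x\|>a)^2. $$
Because $\nu$ is a probability measure and the norm is $\nu$-a.s. finite, one may choose $t_0$ with $p:=\nu(\|x\|\leqslant t_0)\geqslant 3/4$. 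Setting $t_{n+1}=t_0+\sqrt2\,t_n$ (so that $(t_{n+1}-t_0)/\sqrt2=t_n$) and $q_n=\nu(\|x\|>t_n)$, the displayed inequality gives $q_{n+1}/p\leqslant(q_n/p)^2$, hence $q_n\leqslant p\,(q_0/p)^{2^n}\leqslant 3^{-2^n}$ since $q_0/p=(1-p)/p\leqslant1/3$; this is a doubly exponential decay.

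Then I would convert this into a Gaussian tail. Solving the linear recursion for $t_n$ shows $t_n\sim c\,2^{n/2}$, so $2^n\asymp t_n^2$ and therefore $\nu(\|x\|>t_n)\leqslant Ce^{-\beta t_n^2}$; interpolating monotonically between consecutive $t_n$ (whose ratios are bounded) gives $\nu(\|x\|>t)\leqslant Ce^{-\beta t^2}$ for all $t\geqslant0$, for suitable constants $C,\beta>0$. Integrating this tail bound yields $\int_X e^{\alpha\|x\|^2}\,\nu(dx)<\infty$ for every $\alpha<\beta$, which is the assertion.

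The main obstacle is the second step: exploiting rotational invariance and the triangle inequality to produce a genuinely quadratic recursion for the tails, and then verifying that the geometric growth of $t_n$ played against the doubly exponential decay of $q_n$ combines to a genuine $e^{-\beta t^2}$ rate. By comparison the reduction to the centered case and the final integration of the tail are routine.
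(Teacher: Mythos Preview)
The paper does not supply a proof of this theorem: it is quoted as a classical result and simply attributed to Fernique \cite{Fernique}, then used to conclude that the Wiener measure has finite moments of every order. So there is nothing in the paper to compare your argument against at the level of details.

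That said, your proposal is correct and is precisely the standard symmetrization proof. The rotation step, the recursive inequality $p\,q_{n+1}\leqslant q_n^2$, the doubly exponential decay $q_n\leqslant 3^{-2^n}$, and the geometric growth $t_n\sim c\,2^{n/2}$ combine exactly as you describe to give $\nu(\|x\|>t)\leqslant Ce^{-\beta t^2}$ and hence the integrability of $e^{\alpha\|x\|^2}$ for small $\alpha$. The reduction to the centered case is handled correctly, and for the application at hand (the Wiener measure) it is indeed unnecessary. Your identification of the key step---squaring the tail via rotational invariance and the triangle inequality---is accurate; the rest is routine. In short: the paper omits the proof, and what you have written is essentially Fernique's original argument.
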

\noindent Consequently, any Gaussian probability measure, and in particular the Wiener measure, on a Banach space has finite moments of any order.\\
\noindent Now, to check that also the measures $\theta_u$ have finite second moment, we will use a Fernique-type theorem.
\begin{thm}\cite{Sugita}
Let $\Phi \in \D^{2,-r}(W_0^d) $ be positive and $\nu_{\Phi}$ its corresponding measure. Then, there exists $\alpha >0$ such that 
$$ \int _{W_0^d} e^{\alpha \| x \| ^2} \nu_{\Phi} (dx) <\infty.  $$
\end{thm}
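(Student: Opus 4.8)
The plan is to deduce the exponential integrability from a Gaussian-type decay of the tails $\nu_{\Phi}\big(\{\|x\|>t\}\big)$. Writing the integral as a sum over the shells $\{n\leqslant\|x\|<n+1\}$, one sees that $\int_{W_0^d}e^{\alpha\|x\|^2}\,\nu_{\Phi}(dx)<\infty$ for small $\alpha>0$ as soon as $\nu_{\Phi}\big(\{\|x\|>t\}\big)\leqslant C\,e^{-ct^2}$ for some $c>0$ and all large $t$. Since the sets $\{\|x\|>t\}$ are open, Theorem \ref{thma} applied to $A=\{\|x\|\geqslant t\}\subset\{\|x\|>t-1\}$ gives $\nu_{\Phi}(\{\|x\|\geqslant t\})\leqslant\|\Phi\|_{2,-r}\sqrt{C_2^{r}(\{\|x\|>t-1\})}$, so it suffices to prove the capacity estimate
$$C_2^{r}\big(\{\|x\|>t\}\big)\leqslant C\,e^{-ct^2},\qquad t\to\infty.$$
This capacitary bound is the heart of the matter.

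To control the capacity I would pass to the dual, potential-theoretic description of $C_2^{r}$. Let $\mathcal{L}$ be the number operator, acting as multiplication by $k$ on the $k$-th Wiener chaos, so that $\|\eta\|_{2,r}=\|(I+\mathcal{L})^{r/2}\eta\|_2$ in the notation of the paper, and set $U_r=(I+\mathcal{L})^{-r/2}$. The subordination identity
$$U_r=\frac{1}{\Gamma(r/2)}\int_0^\infty s^{r/2-1}e^{-s}\,\Gamma(e^{-s}I)\,ds$$
expresses $U_r$ as an average of the Ornstein--Uhlenbeck operators $T_s=\Gamma(e^{-s}I)$, which are positivity preserving; hence $U_r$ is a positive operator. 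Consequently, whenever $g\geqslant0$ satisfies $U_rg\geqslant1$ on an open set $O$, the function $f=U_rg$ obeys $f\geqslant\mathbf{1}_O$ and $\|f\|_{2,r}=\|g\|_2$, so that $C_2^{r}(O)\leqslant\|g\|_2^2$. The task is thus reduced to producing, for $O=\{\|x\|>t\}$, a nonnegative $g$ with $U_rg\geqslant1$ on $O$ and $\|g\|_2^2\leqslant Ce^{-ct^2}$.

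I would take $g=c_t\,\mathbf{1}_{\{\|x\|>t-1\}}$ and estimate $T_sg$ through the Mehler formula
$$T_sg(x)=\int_{W_0^d}g\big(e^{-s}x+\sqrt{1-e^{-2s}}\,y\big)\,\mu(dy).$$
For $\|x\|>t$ one has $\|e^{-s}x+\sqrt{1-e^{-2s}}\,y\|\geqslant e^{-s}t-\sqrt{2s}\,\|y\|$, which exceeds $t-1$ once $s$ is of order $1/t$ and $\|y\|$ is not too large. Choosing $s_*=c_0/t$ with $c_0$ small, the $\mu$-measure of the admissible $y$ (namely $\|y\|\lesssim\sqrt t$) tends to $1$, so $T_sg(x)\geqslant c_t/2$ for all $s\leqslant s_*$ and all $\|x\|>t$. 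Integrating against the subordinating density gives $U_rg(x)\geqslant \tfrac{c_t}{2\Gamma(r/2)}\int_0^{s_*}s^{r/2-1}e^{-s}\,ds\asymp c_t\,t^{-r/2}$ on $O$, so $c_t\asymp t^{r/2}$ suffices to force $U_rg\geqslant1$ there. With this choice,
$$\|g\|_2^2=c_t^2\,\mu\big(\{\|x\|>t-1\}\big)\asymp t^{r}\,\mu\big(\{\|x\|>t-1\}\big),$$
and the classical Fernique theorem applied to $\mu$ bounds $\mu(\{\|x\|>t-1\})$ by $Ce^{-\beta(t-1)^2}$. The polynomial factor $t^{r}$ is absorbed, yielding $C_2^{r}(\{\|x\|>t\})\leqslant\|g\|_2^2\leqslant C'e^{-c't^2}$, exactly the estimate required.

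The main obstacle is this capacity estimate, and within it the correct scaling $s_*\sim1/t$ of the smoothing parameter: the Mehler averaging must be short enough that $U_rg$ still dominates $1$ on the shell $\{\|x\|>t\}$, yet this shortness costs only the polynomial factor $t^{r/2}$, which the Gaussian Fernique tail of $\mu$ comfortably defeats. Once the capacity bound is in hand, combining it with Theorem \ref{thma} and summing the shell contributions completes the proof for every $\alpha<c'/2$.
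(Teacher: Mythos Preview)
The paper does not give a proof of this statement: it is quoted verbatim from Sugita's 1988 paper and used as a black box to justify that the measures $\theta_u$ have finite second moment. There is therefore nothing in the present paper to compare your argument against.

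That said, your sketch is essentially the standard route (and the one Sugita takes): reduce the exponential integrability to Gaussian decay of the capacities $C_2^{r}\big(\{\|x\|>t\}\big)$ via Theorem~\ref{thma}, then manufacture a test function $f=U_r g$ with $g=c_t\mathbf{1}_{\{\|x\|>t-1\}}$ and use the Mehler representation of the Ornstein--Uhlenbeck semigroup to show $U_r g\geqslant 1$ on $\{\|x\|>t\}$ with $c_t$ of only polynomial size in $t$, so that Fernique's theorem for $\mu$ absorbs the loss. The scaling $s_*\sim 1/t$ and the resulting factor $t^{r/2}$ are exactly right; your identification $\|U_r g\|_{2,r}=\|g\|_2$ is correct since $(I+\mathcal{L})^{r/2}U_r=\mathrm{Id}$. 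The only points a referee would ask you to make fully rigorous are (i) that $U_r$ is positivity preserving---which follows from the subordination formula you wrote, since each $\Gamma(e^{-s}I)=T_s$ is Markovian---and (ii) the uniformity in $s\in[0,s_*]$ of the lower bound on $T_sg(x)$, which you correctly observe is monotone in $s$ so that the worst case $s=s_*$ controls everything. With those details filled in, the argument is complete.
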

\noindent It follows from this theorem that the measures $\theta_u$ have finite moments of any order. \\
\noindent Notice that $\theta_u$ is not a probability measure. Let us then define a probability measure
$$ \widetilde{\theta_u}=\dfrac{\theta_u}{ \theta_u(W_0^d) }=\dfrac{\theta_u }{m(u,d) } \,.$$
\noindent In order to compute the quadratic Wasserstein distance between $\theta_u$ and the Wiener measure $\mu$ we will use the "Talagrand inequality". But first let us recall a definition.
\begin{defn}\cite{CV}
Let $p,q$ be two probability measures on the same probability space $X$. The relative entropy of $p$ with respect to $q$ is defined by : 
$$ H(p|q)=\int _{X} \log \Big( \dfrac{dp}{dq} \Big) dp $$
whenever $p$ is absolutely continuous with respect to $q$, otherwise $H(p|q)=\infty $.
\end{defn}

\begin{thm}\cite{Blower}
Let $\nu_1,\nu _2$ be two probability measures on $\R^N$. Suppose that $\nu_2$ has a density with respect to Lebesgue measure of the form : $\nu_2(dx)=e^{-V(x)}dx$ where $V :\R^N\to [0,\infty) $ and $V\in C^{\infty} ( \R^N)$. Suppose that there exists $\kappa >0$ such that $Hess( V)\geqslant \kappa . I_N$.  Then, 
$$ \mathcal{W}_2^2 (\nu_1,\nu_2)\leqslant \dfrac{2}{\kappa} H(\nu_1|\nu_2). $$
\end{thm}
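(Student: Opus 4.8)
The plan is to prove the inequality by the mass-transportation (optimal transport) method, which converts the curvature hypothesis $\mathrm{Hess}(V)\geqslant\kappa\,I_N$ into a pointwise estimate through the Monge--Amp\`ere equation. We may assume $H(\nu_1|\nu_2)<\infty$, since otherwise the right-hand side is infinite and there is nothing to prove. Because the density $e^{-V}$ of $\nu_2$ is everywhere positive, finiteness of the entropy forces $\nu_1$ to be absolutely continuous, say $\nu_1(dx)=f(x)\,dx$. Moreover both measures then have finite second moment --- for $\nu_2$ because uniform convexity forces $V$ to grow at least quadratically, and for $\nu_1$ because finite entropy against a measure with a Gaussian-type tail yields $\int_{\R^N}|x|^2\,\nu_1(dx)<\infty$ --- so that $\mathcal{W}_2(\nu_1,\nu_2)<\infty$.

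First I would invoke Brenier's theorem to obtain the optimal transport map from $\nu_2$ to $\nu_1$: there is a convex function $\phi$ on $\R^N$ such that $T=\nabla\phi$ pushes $\nu_2$ forward onto $\nu_1$ and realizes the optimal cost,
$$ \mathcal{W}_2^2(\nu_1,\nu_2)=\int_{\R^N}\big|\nabla\phi(x)-x\big|^2\,\nu_2(dx). $$
By Alexandrov's theorem $\phi$ has a Hessian $D^2\phi(x)\geqslant 0$ for almost every $x$, and the Monge--Amp\`ere equation holds $\nu_2$-almost everywhere in the form $f\big(\nabla\phi(x)\big)\det D^2\phi(x)=e^{-V(x)}$. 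Taking logarithms gives $\log f\big(\nabla\phi(x)\big)=-V(x)-\log\det D^2\phi(x)$.

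Next I would rewrite the relative entropy through the change of variables $y=\nabla\phi(x)$. Since $\nabla\phi$ transports $\nu_2$ onto $\nu_1$,
$$ H(\nu_1|\nu_2)=\int_{\R^N}\big[\log f(y)+V(y)\big]\,\nu_1(dy)=\int_{\R^N}\Big[V\big(\nabla\phi(x)\big)-V(x)-\log\det D^2\phi(x)\Big]\,\nu_2(dx). $$
I would then bound the integrand from below by two elementary inequalities: the concavity estimate $\log\det A\leqslant\mathrm{tr}\,A-N$ for symmetric $A\geqslant 0$, which gives $-\log\det D^2\phi\geqslant N-\Delta\phi$, and the second-order inequality coming from $\mathrm{Hess}(V)\geqslant\kappa I_N$,
$$ V\big(\nabla\phi(x)\big)-V(x)\geqslant\big\langle\nabla V(x),\nabla\phi(x)-x\big\rangle+\frac{\kappa}{2}\big|\nabla\phi(x)-x\big|^2. $$
Setting $\psi(x)=\phi(x)-|x|^2/2$, so that $\nabla\psi=\nabla\phi-\mathrm{id}$ and $\Delta\psi=\Delta\phi-N$, these combine into
$$ H(\nu_1|\nu_2)\geqslant\int_{\R^N}\Big[\big\langle\nabla V,\nabla\psi\big\rangle-\Delta\psi+\frac{\kappa}{2}\,|\nabla\psi|^2\Big]\,\nu_2(dx). $$
The decisive step is that the first two terms cancel after integration by parts against $\nu_2(dx)=e^{-V(x)}\,dx$: since $\nabla(e^{-V})=-\nabla V\,e^{-V}$, the divergence theorem applied to the field $\nabla\psi\,e^{-V}$ yields $\int_{\R^N}\Delta\psi\,e^{-V}\,dx=\int_{\R^N}\big\langle\nabla\psi,\nabla V\big\rangle e^{-V}\,dx$, that is $\int\langle\nabla V,\nabla\psi\rangle\,d\nu_2=\int\Delta\psi\,d\nu_2$. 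Hence the bracket integrates to $\frac{\kappa}{2}\int|\nabla\psi|^2\,d\nu_2=\frac{\kappa}{2}\,\mathcal{W}_2^2(\nu_1,\nu_2)$, and we obtain $H(\nu_1|\nu_2)\geqslant\frac{\kappa}{2}\,\mathcal{W}_2^2(\nu_1,\nu_2)$, which is the assertion.

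I expect the main obstacle to be the regularity bookkeeping rather than the algebra. The Brenier potential $\phi$ is merely convex, hence only twice differentiable almost everywhere, so the Monge--Amp\`ere identity and the bound $\log\det D^2\phi\leqslant\Delta\phi-N$ must be read in the Alexandrov sense; here the distributional Laplacian dominates the pointwise (Alexandrov) one, and replacing the latter by the former only lowers the integrand, so it is harmless for our lower bound. The integration by parts likewise needs justification, since $\nabla\psi=T-\mathrm{id}$ is not a priori smooth and one must control the boundary term at infinity against the Gaussian-type decay of $e^{-V}$. I would handle both points by first proving the inequality for smooth strongly convex $V$ and well-behaved $\nu_1$, where all manipulations are licit, and then passing to the limit using the finiteness of the entropy and of the second moments established at the outset.
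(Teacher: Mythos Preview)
The paper does not prove this theorem: it is stated as a citation from \cite{Blower} and used as a black box in Section~3, so there is no ``paper's own proof'' to compare against. Your proposal is a correct outline of the standard mass-transportation proof of the Talagrand-type inequality (Brenier map, Monge--Amp\`ere change of variables, the concavity bound $\log\det A\leqslant \mathrm{tr}\,A-N$, and integration by parts against $e^{-V}$), in the spirit of Cordero-Erausquin and Otto--Villani; the regularity caveats you flag (Alexandrov second derivatives for the Brenier potential, justification of the integration by parts via approximation) are exactly the right ones, and your suggested smoothing-and-limit strategy is the standard way to close them.
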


\noindent In order to use this inequality, we need to consider finite dimensional marginals of $\widetilde{\theta_u}$ and $\mu$. 
\begin{defn}
Let $0=t_0<t_1\cdots <t_n\leqslant 1$ and $\vec{t}=(t_1,\cdots,t_n)$. There exists a unique positive generalized Wiener function  $\rho_{2,\vec{t}}(u)$ on $\R^{d\times n}$ such that for every $f\in C_b^{\infty} (\R^{d\times n})$ we have 
$$ \Big( f\big( W(t_1),\cdots, W(t_n) \big), \rho_{2,\vec{t}}(u)\Big) =\Big( f\big( W(t_1),\cdots, W(t_n) \big), \rho_{2}(u)\Big) .  $$
We denote by $ \theta_{u,\vec{t}} $ the associated measure in $\R ^{d\times n}$.\\
If $ \vec{t}=(\frac{1}{n},\frac{2}{n},\cdots,\frac{n-1}{n},1) $ then $ \theta_{u,\vec{t}} $ is denoted $ \theta_{u,n} $.
\end{defn}
\begin{defn}
Let $0=t_0<t_1\cdots <t_n\leqslant 1$ and $\vec{t}=(t_1,\cdots,t_n)$. We denote by $ \mu_{\vec{t}}$ the probability measure in $\R ^{d\times n}$ of $\big( W(t_1),\cdots, W(t_n) \big)$. We recall that 
$$ \mu_{\vec{t}}(dx_1,\cdots,dx_n)=\prod _{j=1}^n p^d_{t_j-t_{j-1}}(x_j-x_{j-1})\, dx_1\cdots dx_n\,. $$
If $ \vec{t}=(\frac{1}{n},\frac{2}{n},\cdots,\frac{n-1}{n},1) $ then $ \mu_{\vec{t}} $ is denoted $ \mu_{n} $.
\end{defn}

\begin{lem} We have 
$$ \theta_{u,\vec{t}} \big( \R ^{d\times n} \big)=m(u,d) . $$
So that $ \widetilde{\theta_{u,\vec{t}}}=\dfrac{\theta_{u,\vec{t}}}{  m(u,d)  }  $ is a probability measure on $\R ^{d\times n}$.
\end{lem}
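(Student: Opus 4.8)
The plan is to compute the total mass of $\theta_{u,\vec{t}}$ by pairing $\rho_{2,\vec{t}}(u)$ against the constant test function $1$, and then to transport this pairing back to the Wiener space through the very identity that defines $\rho_{2,\vec{t}}(u)$. The whole content is the consistency of the finite-dimensional projection with the total-mass computation already carried out in Corollary \ref{Cor}.

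First I would record that, since $\rho_{2,\vec{t}}(u)$ is a positive generalized Wiener function on $\R^{d\times n}$, the finite-dimensional analogue of Theorem \ref{suta} furnishes the finite positive measure $\theta_{u,\vec{t}}$ together with the representation $(\rho_{2,\vec{t}}(u), F)=\int_{\R^{d\times n}} F\, d\theta_{u,\vec{t}}$ for every $F\in \D^{2,+\infty}(\R^{d\times n})$. Choosing $F\equiv 1$, which lies in $C_b^{\infty}(\R^{d\times n})$ and hence in $\D^{2,+\infty}$, I obtain
$$ \theta_{u,\vec{t}}\big(\R^{d\times n}\big)=\int_{\R^{d\times n}} 1\, d\theta_{u,\vec{t}}=\big(1, \rho_{2,\vec{t}}(u)\big). $$
Next I would apply the defining relation of $\rho_{2,\vec{t}}(u)$ with the choice $f\equiv 1$. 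Since $f\big(W(t_1),\dots,W(t_n)\big)$ is then the constant function $1$ on $W_0^d$, the relation reads $\big(1, \rho_{2,\vec{t}}(u)\big)=\big(1, \rho_2(u)\big)$. The right-hand pairing equals $\int_{W_0^d} 1\, d\theta_u=\theta_u\big(W_0^d\big)$ by the representation of $\theta_u$, and this is precisely $m(u,d)$ by Corollary \ref{Cor} (equivalently, by Theorem \ref{prop9} applied to the bounded functional $\eta=1$, for which $\vp_{s,t}\equiv 1$). Chaining these equalities gives $\theta_{u,\vec{t}}\big(\R^{d\times n}\big)=m(u,d)$, and dividing by the constant $m(u,d)$ shows that $\widetilde{\theta_{u,\vec{t}}}$ is a probability measure on $\R^{d\times n}$.

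The only point requiring care is the admissibility of the constant test function: $1$ must simultaneously qualify as an $f\in C_b^{\infty}(\R^{d\times n})$ in the defining identity and as an element of $\D^{2,+\infty}$ in the Sugita-type representation. Both hold, since $1$ is smooth with all derivatives bounded and coincides with its zeroth chaos, so no limiting argument is needed. If one preferred to avoid inserting $f\equiv 1$ directly, the same conclusion would follow by monotone approximation of $\1_{\R^{d\times n}}$ by cutoff functions in $C_b^{\infty}$, combined with the finiteness of $\theta_{u,\vec{t}}$ guaranteed by Theorem \ref{suta}. There is no genuine analytic obstacle here; the statement is a bookkeeping identity expressing that projecting onto finitely many coordinates preserves total mass.
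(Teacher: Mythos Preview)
Your proof is correct and follows exactly the same chain of equalities as the paper's proof: $\theta_{u,\vec{t}}(\R^{d\times n})=(1,\rho_{2,\vec{t}}(u))=(1,\rho_2(u))=\theta_u(W_0^d)=m(u,d)$. The paper gives this in a single line without comment, while you have simply spelled out the justifications for each step.
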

\begin{proof}
By definition, 
$$ \theta_{u,\vec{t}} \big( \R ^{d\times n} \big)=\Big( 1, \rho_{2,\vec{t}}(u)\Big) =\Big( 1, \rho_{2}(u)\Big)=\theta_u(W_0^d)  =m(u,d) . $$
\end{proof}

\begin{lem} For every $s<t$ we have 
$$ \E \Big [ p_{\ve} ^d \big(W(t)-W(s)-u \big)\Big| W(t_1),\cdots, W(t_n) \Big]  =p_{\ve + \sigma^2} ^d \Big( \sum_{j=1}^n \dfrac{\alpha_j}{t_j-t_{j-1}} (W(t_j)-W(t_{j-1}))\;-u \Big), $$
where 
\begin{align*}
& \alpha_j= \lambda \big( [s,t]\cap [t_{j-1},t_j]\big) \,, \; j=1,\cdots, n\\
& \sigma^2=t-s-\sum_{j=1}^n \dfrac{\alpha_j ^2}{t_j-t_{j-1}}= dist ^2 \Big( \mathbf{1}_{[s,t]},\, LS\Big(  \mathbf{1}_{[t_0,t_1]},\cdots, \mathbf{1}_{[t_{n-1},t_n]} \Big) \Big).
\end{align*}
\end{lem}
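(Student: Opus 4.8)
The plan is to reduce the statement to a standard Gaussian conditioning computation. Since the $d$ coordinates $W_1,\dots,W_d$ are independent one-dimensional Brownian motions and the kernel $p^d_\ve$ factorises as a product over coordinates, it suffices to determine, for a single coordinate, the conditional law of the increment $W_k(t)-W_k(s)$ given the vector $\big(W_k(t_1),\dots,W_k(t_n)\big)$, and then to recombine the one-dimensional results into the $d$-dimensional kernel.

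First I would represent the increments as Wiener integrals. Write $W_k(t)-W_k(s)=\int_0^1 \1_{[s,t]}(r)\,dW_k(r)$ and, using $t_0=0$ and $W_k(0)=0$, note that the conditioning $\sigma$-field equals $\sigma\big(\int_0^1 \1_{[t_{j-1},t_j]}\,dW_k,\ 1\le j\le n\big)$. Inside the Gaussian Hilbert space generated by $W_k$, isometric to $L^2[0,1]$, I would decompose $\1_{[s,t]}=P\1_{[s,t]}+R$, where $P$ is the orthogonal projection onto $LS\big(\1_{[t_0,t_1]},\dots,\1_{[t_{n-1},t_n]}\big)$. Because these indicators have pairwise disjoint supports and squared norms $t_j-t_{j-1}$, the projection is explicit: $P\1_{[s,t]}=\sum_j \frac{\alpha_j}{t_j-t_{j-1}}\1_{[t_{j-1},t_j]}$ with $\alpha_j=\langle\1_{[s,t]},\1_{[t_{j-1},t_j]}\rangle=\lambda([s,t]\cap[t_{j-1},t_j])$, while the residual has squared norm $\|R\|^2=(t-s)-\sum_j \alpha_j^2/(t_j-t_{j-1})=\sigma^2$, which is by construction the stated distance.

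The key step is then the Gaussian conditioning. The $\mathcal F$-measurable part $\int P\1_{[s,t]}\,dW_k$ equals $\sum_j \frac{\alpha_j}{t_j-t_{j-1}}\big(W_k(t_j)-W_k(t_{j-1})\big)$, whereas $\int R\,dW_k$, being orthogonal to the whole conditioning family, is jointly Gaussian with it and hence independent of $\mathcal F$, with variance $\sigma^2$. Consequently, conditionally on $\mathcal F=\sigma(W(t_1),\dots,W(t_n))$, the vector $W(t)-W(s)$ is Gaussian $\mathcal N(M,\sigma^2 I_d)$ with $M=\sum_j \frac{\alpha_j}{t_j-t_{j-1}}\big(W(t_j)-W(t_{j-1})\big)$. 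This identification of the conditional law is the heart of the argument; once it is in place the rest is mechanical. The only point needing care is that orthogonality of $R$ to the conditioning span yields genuine independence rather than mere uncorrelatedness, but this is immediate since all the variables involved are jointly Gaussian.

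Finally I would evaluate the conditional expectation of the kernel. Writing the conditional density of $W(t)-W(s)$ as $p^d_{\sigma^2}(\cdot-M)$ gives
$$\E\big[p^d_\ve(W(t)-W(s)-u)\,\big|\,\mathcal F\big]=\int_{\R^d}p^d_\ve(z-u)\,p^d_{\sigma^2}(z-M)\,dz,$$
and the Gaussian semigroup (convolution) identity $p^d_\ve*p^d_{\sigma^2}=p^d_{\ve+\sigma^2}$, together with the symmetry $p^d_\ve(-\cdot)=p^d_\ve(\cdot)$, turns this into $p^d_{\ve+\sigma^2}(M-u)$, exactly the claimed expression. Equivalently, one may insert the Fourier representation of $p^d_\ve$ and use the conditional characteristic function $\E[e^{-i\langle\xi,W(t)-W(s)\rangle}\mid\mathcal F]=e^{-i\langle\xi,M\rangle-\sigma^2\|\xi\|^2/2}$ to reach the same conclusion.
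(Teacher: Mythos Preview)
Your proposal is correct and follows essentially the same route as the paper: orthogonally decompose $W(t)-W(s)$ into its projection onto the span of $W(t_1),\dots,W(t_n)$ plus an independent Gaussian residual of variance $\sigma^2$, and then reduce the conditional expectation to the Gaussian convolution $p^d_{\ve}*p^d_{\sigma^2}=p^d_{\ve+\sigma^2}$. The only cosmetic difference is that you carry out the projection in $L^2[0,1]$ via the Wiener isometry (projecting $\1_{[s,t]}$ onto the span of the disjoint indicators), whereas the paper projects directly in the Gaussian space; the two are equivalent and yield the same coefficients $\alpha_j/(t_j-t_{j-1})$ and the same $\sigma^2$.
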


\begin{proof}
Decompose the vector $W(t)-W(s)$ as follows  
$$\begin{cases}
W(t)-W(s)=X+Y \\
X=  \text{ orthogonal projection of } W(t)-W(s) \text{onto } LS(W(t_1),\cdots, W(t_n)) : \\
X=\sum_{j=1}^n  \dfrac{\alpha_j}{t_j-t_{j-1}} (W(t_j)-W(t_{j-1}))\,, \\
Y\in LS(W(t_1),\cdots, W(t_n)) ^{\perp}.
\end{cases}$$
It follows that $Y$ is independent from $(W(t_1),\cdots, W(t_n))$. Consequently, 
\begin{align*}
\E \Big [ p_{\ve} ^d \big(W(t)-W(s)-u \big)\Big| W(t_1),\cdots, W(t_n) \Big] 
&=\E \Big [ p_{\ve} ^d \big(X+Y-u \big)\Big| W(t_1),\cdots, W(t_n) \Big]   \\
&=\E  p_{\ve} ^d \big(x+Y-u \big)\Big | _{x=X}\\
&= p_{\ve +\sigma ^2} ^d \big(x -u \big) \Big | _{x=X} \\
&= p_{\ve +\sigma ^2} ^d \big(X -u \big)  \\
&=p_{\ve + \sigma^2} ^d \Big( \sum_{j=1}^n \dfrac{\alpha_j}{t_j-t_{j-1}} (W(t_j)-W(t_{j-1}))\;-u \Big).
\end{align*}
\end{proof}

\begin{lem} The measure $ \theta_{u,\vec{t}} $ has a density with respect to $ \mu_{\vec{t}}$ given by 
$$ q_{u,\vec{t}}(x_1,\cdots, x_n)=\int_{\Delta_2  } p_{\sigma ^2}^d \Big( \sum_{j=1}^n \dfrac{\alpha_j}{t_j-t_{j-1}} (x_j-x_{j-1})\;-u \Big)  dsdt \,.$$
If $ \vec{t}=(\frac{1}{n},\frac{2}{n},\cdots,\frac{n-1}{n},1) $ then $ q_{u,\vec{t}}$ is denoted $ q_{u,n} $ :
$$ q_{u,n}(x_1,\cdots, x_n)=\int_{\Delta_2  } p_{\sigma ^2}^d \Big( n\sum_{j=1}^n \alpha_j (x_j-x_{j-1})\;-u \Big)  dsdt \,.$$
\end{lem}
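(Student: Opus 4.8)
The plan is to establish the Radon--Nikodym identity $\theta_{u,\vec t}(dx)=q_{u,\vec t}(x)\,\mu_{\vec t}(dx)$ by checking that both measures assign the same integral to every test function $f\in C_b^\infty(\R^{d\times n})$; since $\theta_{u,\vec t}$ is by definition determined by its pairing against such $f$, this suffices. First I would unfold the definitions: writing $x_0=0$ (because $W(t_0)=W(0)=0$) and using the definition of $\theta_{u,\vec t}$ through $\rho_{2,\vec t}(u)$ together with the limit representation of $\rho_2(u)$, I get
$$\int_{\R^{d\times n}}f\,d\theta_{u,\vec t}=\Big(f\big(W(t_1),\dots,W(t_n)\big),\rho_2(u)\Big)=\lim_{\ve\to0}\int_{\Delta_2}\E\Big[f\big(W(t_1),\dots,W(t_n)\big)\,p^d_\ve\big(W(t)-W(s)-u\big)\Big]\,dsdt,$$
where the exchange of $\E$ with $\int_{\Delta_2}$ for fixed $\ve$ is legitimate by Tonelli, since $f$ is bounded and $\E\int_{\Delta_2}p^d_\ve(W(t)-W(s)-u)\,dsdt=\int_{\Delta_2}p^d_{\ve+t-s}(u)\,dsdt<\infty$.

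Next I would condition the inner expectation on $\big(W(t_1),\dots,W(t_n)\big)$ and invoke the preceding lemma, which replaces $\E\big[p^d_\ve(W(t)-W(s)-u)\,\big|\,W(t_1),\dots,W(t_n)\big]$ by $p^d_{\ve+\sigma^2}\big(\sum_{j=1}^n\frac{\alpha_j}{t_j-t_{j-1}}(W(t_j)-W(t_{j-1}))-u\big)$. Since the resulting integrand is a function of $\big(W(t_1),\dots,W(t_n)\big)$ only, its expectation is an integral against the law $\mu_{\vec t}$ of that vector, so after a further application of Fubini I obtain
$$\int_{\R^{d\times n}}f\,d\theta_{u,\vec t}=\lim_{\ve\to0}\int_{\R^{d\times n}}f(x)\left[\int_{\Delta_2}p^d_{\ve+\sigma^2}\Big(\sum_{j=1}^n\frac{\alpha_j}{t_j-t_{j-1}}(x_j-x_{j-1})-u\Big)\,dsdt\right]\mu_{\vec t}(dx).$$

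It then remains to pass to the limit $\ve\to0$ inside both integrals and identify the bracket with $q_{u,\vec t}(x)$. Pointwise in $(s,t)$ the integrand converges to $p^d_{\sigma^2}(\cdots)$ on the set $\{\sigma^2>0\}$; the complementary set, where $\1_{[s,t]}$ lies in $LS(\1_{[t_0,t_1]},\dots,\1_{[t_{n-1},t_n]})$, forces $s$ and $t$ to be grid points and is therefore Lebesgue-null in $\Delta_2$. I expect the justification of this passage to be the main obstacle: the crude bound $p^d_{\ve+\sigma^2}\le(2\pi\sigma^2)^{-d/2}$ is not integrable near the degenerate set, so one must instead exploit that $\sigma^2$ vanishes at most linearly as $(s,t)$ approaches a grid configuration while, for $u\neq0$ and $\mu_{\vec t}$-a.e.\ $x$, the Gaussian argument stays bounded away from $0$ there, giving super-exponential decay $p^d_{\sigma^2}\sim\sigma^{-d}e^{-c/\sigma^2}$ that is integrable; dominated convergence then applies. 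That $q_{u,\vec t}\in L^1(\mu_{\vec t})$ is guaranteed a posteriori by the total-mass identity $\theta_{u,\vec t}(\R^{d\times n})=m(u,d)<\infty$ obtained above (take $f\equiv1$). This yields $\int f\,d\theta_{u,\vec t}=\int f\,q_{u,\vec t}\,d\mu_{\vec t}$, which is precisely the claimed density.

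Finally, the special case $\vec t=(\tfrac1n,\tfrac2n,\dots,\tfrac{n-1}n,1)$ follows by substitution: there $t_j-t_{j-1}=\tfrac1n$, so $\frac{\alpha_j}{t_j-t_{j-1}}=n\alpha_j$ and $q_{u,\vec t}$ reduces to $q_{u,n}(x)=\int_{\Delta_2}p^d_{\sigma^2}\big(n\sum_{j=1}^n\alpha_j(x_j-x_{j-1})-u\big)\,dsdt$.
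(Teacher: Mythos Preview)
Your proof follows exactly the same route as the paper's: pair $\theta_{u,\vec t}$ against $f\in C_b^\infty(\R^{d\times n})$, write $\rho_2(u)$ as the $\ve$-regularized limit, swap $\E$ with $\int_{\Delta_2}$, condition on $(W(t_1),\dots,W(t_n))$ via the preceding lemma, and let $\ve\to0$ to produce $q_{u,\vec t}$. The paper's own argument is in fact terser---it passes from $p^d_{\ve+\sigma^2}$ to $p^d_{\sigma^2}$ without comment---so your discussion of the degenerate set $\{\sigma^2=0\}$ and the dominated-convergence justification is a welcome addition rather than a deviation.
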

\begin{proof}
From the definition of $ \theta_{u,\vec{t}} $, for any $f\in C_b^{\infty} (\R^{d\times n})$ we have

\begin{align*}
&\int_{ \R^{d\times n} } f(x_1,\cdots,x_n) \theta_{u,\vec{t}}(dx_1,\cdots,dx_n)\\
&=\Big( f\big( W(t_1),\cdots, W(t_n) \big), \rho_{2,\vec{t}}(u)\Big) \\
&=\Big( f\big( W(t_1),\cdots, W(t_n) \big), \rho_{2}(u)\Big)\\
&=\lim_{\varepsilon \to 0} \E f\big( W(t_1),\cdots, W(t_n) \big) \int_{\Delta_2  }  p^d_{\varepsilon}\big( W(t)-W(s)-u\big)dsdt \\
&=\lim_{\varepsilon \to 0} \int_{\Delta_2  }  \E \, \E \Big [  f\big( W(t_1),\cdots, W(t_n) \big) p^d_{\varepsilon}\big( W(t)-W(s)-u\big)\Big| W(t_1),\cdots, W(t_n) \Big]  dsdt\\
&=\lim_{\varepsilon \to 0} \int_{\Delta_2  }  \E \Bigg( f\big( W(t_1),\cdots, W(t_n) \big)  \E \Big [  p^d_{\varepsilon}\big( W(t)-W(s)-u\big)\Big| W(t_1),\cdots, W(t_n) \Big] \Bigg) dsdt\\
&=\lim_{\varepsilon \to 0} \int_{\Delta_2  }  \E \Bigg( f\big( W(t_1),\cdots, W(t_n) \big)  p_{\ve + \sigma^2} ^d \Big( \sum_{j=1}^n \dfrac{\alpha_j}{t_j-t_{j-1}} (W(t_j)-W(t_{j-1}))\;-u \Big) \Bigg) dsdt\\
&=\int_{\Delta_2  }  \E \Bigg( f\big( W(t_1),\cdots, W(t_n) \big)  p_{ \sigma^2} ^d \Big( \sum_{j=1}^n \dfrac{\alpha_j}{t_j-t_{j-1}} (W(t_j)-W(t_{j-1}))\;-u \Big) \Bigg) dsdt\\
&=\int_{ \R^{d\times n} } f(x_1,\cdots,x_n) \int_{\Delta_2  }  p_{ \sigma^2} ^d \Big( \sum_{j=1}^n \dfrac{\alpha_j}{t_j-t_{j-1}} (x_j -x_{j-1})\;-u \Big)  dsdt \; \mu_{\vec{t}} (dx_1,\cdots dx_n)\,.
\end{align*}
Lemma is proved.
\end{proof}

\begin{lem} The measure $ \mu_{n}$ has a density of the form $ e^{-V(x)}dx$ where $V :\R^{d\times n}\to [0,\infty) $ and $V\in C^{\infty} ( \R^{d\times n})$. The Hessian matrix of $V$ has eigenvalues 
$$ \lambda_j=2n\Big( 1-\cos \frac{2j+1}{2n+1}\pi\,\Big),\; j=0,\cdots,n-1. $$
In particular, the best constant $\kappa_n >0$ such that $Hess( V)\geqslant \kappa_n . I_{n\times d}$ is given by 
$$ \kappa_n=\min_j \lambda_j =\lambda_0=2n\Big( 1-\cos \dfrac{\pi}{2n+1}\,\Big). $$
\end{lem}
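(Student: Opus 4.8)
The plan is to write the density of $\mu_n$ explicitly, read off $V$, and then diagonalise its Hessian. Since $t_j=j/n$ every increment equals $1/n$, and with the convention $x_0=0$ (because $W(0)=0$) the density becomes
$$ \mu_n(dx_1,\cdots,dx_n)=\prod_{j=1}^n p^d_{1/n}(x_j-x_{j-1})\,dx_1\cdots dx_n = c_n\,\exp\Big(-\tfrac n2\sum_{j=1}^n\|x_j-x_{j-1}\|^2\Big)\,dx, $$
with $c_n=(n/2\pi)^{dn/2}$. Hence $V(x)=\tfrac n2\sum_{j=1}^n\|x_j-x_{j-1}\|^2-\log c_n$ is an inhomogeneous quadratic polynomial on $\R^{d\times n}$, so $V\in C^\infty$; the additive constant is irrelevant for $\mathrm{Hess}(V)$, which is all that Blower's theorem actually uses.

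The next observation is that the quadratic form decouples over the $d$ spatial coordinates, each coordinate contributing the same $n\times n$ form. Consequently, after grouping the $nd$ variables by spatial coordinate, $\mathrm{Hess}(V)$ is block-diagonal with $d$ identical blocks equal to $nM$, where $M$ is the Hessian of $y\mapsto\tfrac12\sum_{j=1}^n(y_j-y_{j-1})^2$ on $\R^n$ (with $y_0=0$). A direct differentiation gives the tridiagonal matrix
$$ M=\begin{pmatrix} 2 & -1 & & & \\ -1 & 2 & -1 & & \\ & \ddots & \ddots & \ddots & \\ & & -1 & 2 & -1 \\ & & & -1 & 1 \end{pmatrix}, $$
i.e. the diagonal is $(2,\cdots,2,1)$ and the sub/super-diagonals are $-1$; the last diagonal entry is $1$ because the endpoint $y_n$ appears in only one squared difference. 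Each eigenvalue of $M$ thus yields an eigenvalue of $\mathrm{Hess}(V)$ multiplied by $n$ and repeated $d$ times.

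The core of the proof is to diagonalise $M$. Writing the eigenvalue $\lambda$ as $2(1-\cos\theta)$ with $\theta\in(0,\pi)$, the interior rows of $Mv=\lambda v$ give the recurrence $v_{k+1}=2\cos\theta\,v_k-v_{k-1}$, whose solutions are $v_k=A\cos k\theta+B\sin k\theta$. The first row is exactly this recurrence under the convention $v_0=0$, forcing $A=0$, so $v_k=\sin k\theta$ up to scale. I would then show that the last row $-v_{n-1}+v_n=\lambda v_n$ is equivalent, after substituting the recurrence, to the single boundary condition $v_{n+1}=v_n$, that is $\sin((n+1)\theta)=\sin(n\theta)$. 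Using $\sin((n+1)\theta)-\sin(n\theta)=2\cos\frac{(2n+1)\theta}{2}\sin\frac\theta2$ together with $\sin\frac\theta2\neq0$, this reduces to $\cos\frac{(2n+1)\theta}{2}=0$, whence $\theta=\theta_j:=\frac{(2j+1)\pi}{2n+1}$. The values $j=0,\cdots,n-1$ give exactly $n$ distinct angles in $(0,\pi)$, hence all $n$ eigenvalues $\lambda_j=2n(1-\cos\theta_j)$ of $\mathrm{Hess}(V)$ (each of multiplicity $d$).

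Finally, since $t\mapsto 2n(1-\cos t)$ is strictly increasing on $(0,\pi)$, the smallest eigenvalue corresponds to the smallest angle $\theta_0=\frac{\pi}{2n+1}$, giving $\kappa_n=\lambda_0=2n\big(1-\cos\frac{\pi}{2n+1}\big)>0$ as the best (largest) constant for which $\mathrm{Hess}(V)\geqslant\kappa_n I_{n\times d}$. I expect the main obstacle to be the eigenvalue computation — specifically, correctly translating the mixed boundary behaviour (the Dirichlet-type $v_0=0$ coming from the first row, together with the Neumann-type last diagonal entry $1$) into the clean condition $v_{n+1}=v_n$, and then extracting the quantisation $\theta_j=\frac{(2j+1)\pi}{2n+1}$; the remaining steps are routine.
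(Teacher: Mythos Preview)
Your proof is correct and follows essentially the same route as the paper: identify the density, note that the Hessian splits as a tensor/block product so that the problem reduces to a single $n\times n$ tridiagonal matrix, and diagonalise that matrix to obtain $\lambda_j=2n\big(1-\cos\frac{(2j+1)\pi}{2n+1}\big)$. The paper writes the Hessian as $A\otimes I_d$ with $A=2nI_n-nB$ and simply asserts the eigenvalues of $B$ ``by solving a system of linear equations $BZ=\lambda Z$'', whereas you spell out the Chebyshev-type recurrence and the boundary condition $v_{n+1}=v_n$ that yields the quantisation $\theta_j=\frac{(2j+1)\pi}{2n+1}$; this is exactly the computation the paper leaves implicit, so the two arguments coincide.
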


\begin{proof}
Recall that the density of $ \mu_{n}$ is $\prod _{j=1}^n p^d_{ \frac{1}{n} }(x_j-x_{j-1})$, so that the Hessian matrix of $V$ is the tridiagonal matrix  
$$ Hess( V)=\begin{bmatrix}
2nI_d      & -nI_d    & 0        & \cdots &   \cdots     & 0  \\ 
-nI_d      & 2nI_d    & -nI_d    & \ddots &        &  \\ 
0          & -nI_d    & 2nI_d    & \ddots &    \ddots     & \vdots \\
 \vdots & \ddots   & \ddots & \ddots & \ddots &  0 \\
 \vdots &   & \ddots  & -nI_d      & 2nI_d      & -nI_d \\
0    &\cdots        & \cdots & 0      & -nI_d      & nI_d 
\end{bmatrix} \in \mathcal{M}\big( n\times d,\R \big). $$
Using matrix tensor product we can write : $ Hess( V)=A\otimes I_d $ where 
$$ A=\begin{bmatrix}
2n       & -n     & 0        & \cdots &   \cdots     & 0  \\ 
-n       & 2n     & -n     & \ddots &        &  \\ 
0          & -n   & 2n    & \ddots &    \ddots     & \vdots \\
 \vdots & \ddots   & \ddots & \ddots & \ddots &  0 \\
 \vdots &   & \ddots  & -n      & 2n       & -n \\
0    &\cdots        & \cdots & 0      & -n       & n  
\end{bmatrix} \in \mathcal{M}\big( n ,\R \big). $$
It follows that the eigenvalues of $Hess(V)$ are the products of eigenvalues of $A$ and those of $I_d$, which means that $Hess(V)$ and $A$ have the same eigenvalues.\\
Now, $A=2nI_n-nB$, where 
$$B=\begin{bmatrix}
0       & 1     & 0        & \cdots &   \cdots     & 0  \\ 
1       & 0     & 1     & \ddots &        &  \\ 
0          & 1   & 0    & \ddots &    \ddots     & \vdots \\
 \vdots & \ddots   & \ddots & \ddots & \ddots &  0 \\
 \vdots &   & \ddots  & 1      & 0       & 1 \\
0    &\cdots        & \cdots & 0      & 1       & 1  
\end{bmatrix} \in \mathcal{M}\big( n ,\R \big). $$
By solving a system of linear equations $BZ=\lmd Z$ we find the eigenvalues of $B$ 
$$ 2\cos \frac{2j+1}{2n+1}\pi\, ,\; j=0,\cdots,n-1.  $$
Finally, the Hessian matrix of $V$ has eigenvalues 
$$ \lambda_j=2n\Big( 1-\cos \frac{2j+1}{2n+1}\pi\,\Big),\; j=0,\cdots,n-1, $$
which finishes the proof.
\end{proof}

\begin{lem}\label{lem60} We have 
$$ \int _{\R^{d\times n}} q_{u,n}(x_1,\cdots, x_n)\,\mu_{n}(dx_1,\cdots,dx_n)=m(u,d).  $$
 
\end{lem}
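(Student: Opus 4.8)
The plan is to recognise the left-hand side as the total mass of $\theta_{u,n}$ and to invoke the two preceding lemmas. Since $q_{u,n}$ is by construction the density of $\theta_{u,n}$ with respect to $\mu_n$, integrating it against $\mu_n$ over the whole space simply returns $\theta_{u,n}(\R^{d\times n})$. The earlier lemma on total mass gives $\theta_{u,\vec{t}}(\R^{d\times n})=m(u,d)$ for every grid $\vec{t}$, and in particular for the uniform grid $\vec{t}=(\frac1n,\dots,1)$. This settles the identity at once, without any computation.

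Should one prefer a self-contained verification that does not route through the abstract pairing, I would instead compute the integral directly. First I would apply Fubini to interchange the $ds\,dt$ integration over $\Delta_2$ with the $\mu_n$-integration over $\R^{d\times n}$; the nonnegativity of the Gaussian kernel makes this legitimate (Tonelli). The inner integral then reads $\E\,p^d_{\sigma^2}(X-u)$, where $X=\sum_{j=1}^n \frac{\alpha_j}{t_j-t_{j-1}}(W(t_j)-W(t_{j-1}))$ is exactly the orthogonal projection of $W(t)-W(s)$ onto $LS(W(t_1),\dots,W(t_n))$ that appears in the density lemma, and where the expectation is taken under $\mu_n$, i.e. under the joint law of $\big(W(t_1),\dots,W(t_n)\big)$.

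The key step is the Gaussian convolution identity. From the orthogonal decomposition $W(t)-W(s)=X+Y$ used in the density lemma, the projection $X$ is centred Gaussian with isotropic covariance of parameter $(t-s)-\sigma^2$ per coordinate, since $\sigma^2=dist^2\big(\mathbf{1}_{[s,t]},LS(\mathbf{1}_{[t_0,t_1]},\dots,\mathbf{1}_{[t_{n-1},t_n]})\big)$. Hence
$$
\E\,p^d_{\sigma^2}(X-u)=\int_{\R^d}p^d_{\sigma^2}(x-u)\,p^d_{(t-s)-\sigma^2}(x)\,dx=\big(p^d_{\sigma^2}*p^d_{(t-s)-\sigma^2}\big)(u)=p^d_{t-s}(u),
$$
the variances adding under convolution. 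Substituting this back and using the definition of $m(u,d)$ from Corollary \ref{Cor}, namely $m(u,d)=\int_{\Delta_2}p^d_{t-s}(u)\,ds\,dt$, yields the claim.

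Since the statement is essentially bookkeeping already encoded in the total-mass and density lemmas, there is no genuine obstacle. The only points needing a little care in the direct route are the justification of Fubini and the identification of the covariance of $X$ as exactly $(t-s)-\sigma^2$, which is precisely the Pythagorean content of the decomposition $W(t)-W(s)=X+Y$ invoked in the density lemma.
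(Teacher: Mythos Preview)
Both of your routes are correct. Your first argument---identify the left-hand side as $\theta_{u,n}(\R^{d\times n})$ via the density lemma and then invoke the total-mass lemma---is a genuinely shorter path than what the paper does; the only care needed is that the density lemma is proved for test functions $f\in C_b^\infty(\R^{d\times n})$, so one appeals to the fact that two finite positive Borel measures agreeing on such $f$ coincide, whence the constant function $1$ is handled. This is entirely standard and there is no circularity.

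Your second, ``self-contained'' route coincides in spirit with the paper's proof but is packaged more conceptually. The paper writes out the integral, applies Fubini, performs the change of variables $u_j=x_j-x_{j-1}$, and then integrates out the $u_j$ one at a time, using at each step that convolving $p^d_{a}$ with $p^d_{b}$ yields $p^d_{a+b}$; after the induction one is left with $\int_{\Delta_2} p^d_{t-s}(u)\,ds\,dt$. You collapse this induction into a single line by recognising the inner integral as $\E\,p^d_{\sigma^2}(X-u)$ with $X\sim N\big(0,((t-s)-\sigma^2)I_d\big)$ and applying the convolution identity once. The content is the same; your phrasing is more efficient, while the paper's explicit induction has the advantage of not appealing to the projection picture and simply computing with product Gaussians.
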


\begin{proof}
Let 
\begin{align*}
J_n
&=\int _{\R^{d\times n}} q_{u,n}(x_1,\cdots, x_n)\,\mu_{n}(dx_1,\cdots,dx_n) \\
&=\int _{\R^{d\times n}} \int_{\Delta_2  } p_{ \sigma^2} ^d \Big( n\sum_{j=1}^n  \alpha_j (x_j -x_{j-1})\;-u \Big)ds dt \; \prod _{j=1}^n p^d_{ \frac{1}{n} }(x_j-x_{j-1}) dx_1\cdots dx_n\\
&=\int_{\Delta_2  }  \int _{\R^{d\times n}} p_{ \sigma^2} ^d \Big( n\sum_{j=1}^n  \alpha_j (x_j -x_{j-1})\;-u \Big) \prod _{j=1}^n p^d_{ \frac{1}{n} }(x_j-x_{j-1}) dx_1\cdots dx_n \; ds dt\,.
\end{align*} 
By making a change of variables : $u_j=x_j-x_{j-1},\, j=1,\cdots,n $ we obtain 
\begin{align*}
J_n
&=\int_{\Delta_2  }  \int _{\R^{d\times n}} p_{ \sigma^2} ^d \Big( n\sum_{j=1}^n  \alpha_j u_j\;-u \Big) \prod _{j=1}^n p^d_{ \frac{1}{n} }(u_j ) du_1\cdots du_n \; ds dt\\
&=\int_{\Delta_2  }  \int _{\R^{d\times (n-1)}} \prod _{j=1}^{n-1} p^d_{ \frac{1}{n} }(u_j )  \int _{\R^{d }} p_{ \sigma^2} ^d \Big( n\sum_{j=1}^{n }  \alpha_j u_j\;-u \Big)  p^d_{ \frac{1}{n} }(u_n ) du_n \; du_1\cdots du_{n-1} \; ds dt\\
&=\dfrac{1}{(n\alpha_n)^d} \int_{\Delta_2  }  \int _{\R^{d\times (n-1)}} \prod _{j=1}^{n-1} p^d_{ \frac{1}{n} }(u_j )  \int _{\R^{d }} p_{ \frac{\sigma^2}{ n^2\alpha_n^2 } }^d \Big( u_n+ \frac{1}{\alpha _n }\sum_{j=1}^{n-1 }  \alpha_j u_j\;-\frac{1}{\alpha _n } u \Big)  p^d_{ \frac{1}{n} }(u_n ) du_n  du_1\cdots du_{n-1} ds dt\\
&= \int_{\Delta_2  }  \int _{\R^{d\times (n-1)}} \prod _{j=1}^{n-1} p^d_{ \frac{1}{n} }(u_j )  p_{  \sigma^2 +n\alpha _n^2 }^d \Big( n\sum_{j=1}^{n-1 }  \alpha_j u_j\;- u \Big)   du_1\cdots du_{n-1} \; ds dt.
\end{align*} 
By induction, we obtain
\begin{align*}
J_n
&= \int_{\Delta_2  }  \int _{\R^{d }}  p^d_{ \frac{1}{n} }(u_1 )  p_{  \sigma^2 +n\sum_{j=2}^{n }\alpha _j^2 }^d \Big( n\alpha_1 u_1\;- u \Big)   du_1  \; ds dt\\
&=\int_{\Delta_2  } p_{t-s}^d(u)ds dt.
\end{align*} 
Proof is finished.
\end{proof}

\begin{lem} The relative entropy of $\widetilde{\theta_{u,n}}$ with respect to $\mu_n$ is such that 
$$ H(\widetilde{\theta_{u,n}}|\mu_n)\leqslant  - \log\Big( 2m(u,d) (2\pi)^{d/2}\Big)   -\dfrac{d}{2m(u,d)} \int_{\Delta_2  } \log \big(\sigma ^2 \big) p_{t-s}^d ( u) dsdt \,. $$
\end{lem}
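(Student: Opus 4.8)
The plan is to compute the relative entropy directly from its definition. Since $\widetilde{\theta_{u,n}}$ has density $q_{u,n}/m(u,d)$ with respect to $\mu_n$, the Radon--Nikodym derivative of $\widetilde{\theta_{u,n}}$ with respect to $\mu_n$ is
$$ \frac{d\widetilde{\theta_{u,n}}}{d\mu_n} = \frac{q_{u,n}}{m(u,d)}, $$
so that
$$ H(\widetilde{\theta_{u,n}}|\mu_n) = \int_{\R^{d\times n}} \log\Big( \frac{q_{u,n}}{m(u,d)}\Big) \, \frac{q_{u,n}}{m(u,d)}\, \mu_n(dx) = -\log\big( m(u,d)\big) + \frac{1}{m(u,d)} \int_{\R^{d\times n}} q_{u,n} \log q_{u,n}\, \mu_n(dx). $$
The term $-\log m(u,d)$ is already isolated; everything reduces to bounding $\int q_{u,n}\log q_{u,n}\, d\mu_n$ from above.

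\textbf{Bounding the integrand by Jensen.}
The key step is to control $\log q_{u,n}$. Recall that $q_{u,n}(x) = \int_{\Delta_2} p_{\sigma^2}^d\big( n\sum_j \alpha_j(x_j-x_{j-1}) - u\big)\, dsdt$ is an integral over $\Delta_2$ of Gaussian densities. The Gaussian density satisfies the pointwise bound $p_{\sigma^2}^d(\cdot) \leqslant (2\pi\sigma^2)^{-d/2}$, so
$$ q_{u,n}(x) \leqslant \int_{\Delta_2} \frac{1}{(2\pi\sigma^2)^{d/2}}\, dsdt. $$
However, a cleaner route is to write $q_{u,n}\log q_{u,n}$ and use concavity of $\log$ together with the normalization from Lemma \ref{lem60}, namely $\int q_{u,n}\, d\mu_n = m(u,d)$. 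The plan is to apply Jensen's inequality to the concave function $\log$ against the probability measure obtained by normalizing $q_{u,n}\,\mu_n$: this lets me pull the $\log$ inside the $\Delta_2$-integral at the cost of a favorable direction. Concretely, I expect to exploit that $\log p_{\sigma^2}^d(z) = -\frac{d}{2}\log(2\pi\sigma^2) - \frac{\|z\|^2}{2\sigma^2}$, so the $\log$ of the Gaussian contributes exactly the term $-\frac{d}{2}\log(2\pi) - \frac{d}{2}\log(\sigma^2)$ plus a nonpositive quadratic piece that gets discarded to produce an upper bound.

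\textbf{Assembling the estimate.}
Carrying this out, after applying Jensen to move $\log$ through the integral representation of $q_{u,n}$ and integrating against $\mu_n$, the quadratic term $-\|\cdot\|^2/(2\sigma^2)$ is dropped (it is $\leqslant 0$), the constant $-\frac{d}{2}\log(2\pi)$ factors out weighted by $p_{t-s}^d(u)$, and the remaining $-\frac{d}{2}\log(\sigma^2)$ survives weighted by $p_{t-s}^d(u)\, dsdt$. Combining with the $-\log m(u,d)$ term and collecting the constants $(2\pi)^{d/2}$, the factor $\tfrac12$, and the normalizing $1/m(u,d)$ should reproduce exactly
$$ -\log\Big( 2m(u,d)(2\pi)^{d/2}\Big) - \frac{d}{2m(u,d)}\int_{\Delta_2}\log(\sigma^2)\, p_{t-s}^d(u)\, dsdt. $$
The main obstacle I anticipate is justifying the interchange of the $\log$ with the $\Delta_2$-integral correctly: $\log$ is concave, so a naive interchange goes the wrong way unless one normalizes $p_{t-s}^d(u)\,dsdt/m(u,d)$ into a probability measure on $\Delta_2$ and applies Jensen in the form $\log\big(\int f\, d\nu\big) \geqslant \int \log f\, d\nu$ to extract a clean upper bound on $\int q_{u,n}\log q_{u,n}\, d\mu_n$. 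Keeping track of which Gaussian normalizations ($\sigma^2$ versus $t-s$) appear, and verifying via Lemma \ref{lem60} that the weight integrates to $m(u,d)$ so the bookkeeping of constants closes, is where the care is needed.
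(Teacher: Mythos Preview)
Your outline contains the right ingredients (the pointwise bound $\log p_{\sigma^2}^d(z)\leqslant -\tfrac{d}{2}\log(2\pi\sigma^2)$ and Lemma~\ref{lem60} to integrate out the $x$-variables), but the Jensen step as you describe it does not close. You want an \emph{upper} bound on $\int q_{u,n}\log q_{u,n}\,d\mu_n$, yet the inequality $\log\big(\int f\,d\nu\big)\geqslant \int\log f\,d\nu$ (concavity of $\log$) yields only a \emph{lower} bound on $\log q_{u,n}(x)$, hence a lower bound on $q_{u,n}\log q_{u,n}$ after multiplying by the nonnegative factor $q_{u,n}$. Your proposed remedy of replacing the reference measure on $\Delta_2$ by $p_{t-s}^d(u)\,dsdt/m(u,d)$ does not help: $q_{u,n}(x)=\int_{\Delta_2}p_{\sigma^2}^d(\cdot)\,dsdt$ is an integral against Lebesgue measure on $\Delta_2$, not against $p_{t-s}^d(u)\,dsdt$, so there is no way to rewrite $\log q_{u,n}(x)$ as $\log\int f\,d\nu$ with that $\nu$; and even if you could, the inequality still points the wrong way.

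The fix, which is what the paper does, is to apply Jensen to the \emph{convex} function $\phi(x)=x\log x$ rather than to $\log$. Since $|\Delta_2|=\tfrac12$, the measure $2\,dsdt$ is a probability on $\Delta_2$, and writing $\dfrac{q_{u,n}}{m(u,d)}=\displaystyle\int_{\Delta_2}\dfrac{p_{\sigma^2}^d(\cdot)}{2m(u,d)}\,2\,dsdt$ one gets, pointwise in $x$,
\[
\frac{q_{u,n}}{m(u,d)}\log\frac{q_{u,n}}{m(u,d)}
\;\leqslant\;
\int_{\Delta_2}\frac{p_{\sigma^2}^d(\cdot)}{2m(u,d)}\log\frac{p_{\sigma^2}^d(\cdot)}{2m(u,d)}\,2\,dsdt.
\]
Now integrate over $\mu_n$, bound $\log\big(p_{\sigma^2}^d/(2m)\big)\leqslant -\log\big(2m(u,d)(2\pi)^{d/2}\big)-\tfrac{d}{2}\log\sigma^2$, and use the computation behind Lemma~\ref{lem60} (for each fixed $(s,t)$, $\int p_{\sigma^2}^d(\cdot)\,d\mu_n=p_{t-s}^d(u)$) to produce the weight $p_{t-s}^d(u)$. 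This is exactly where the factor $2$ in $\log\big(2m(u,d)(2\pi)^{d/2}\big)$ comes from, and why the final integral over $\Delta_2$ carries $p_{t-s}^d(u)\,dsdt$ rather than plain $dsdt$.
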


\begin{proof}
\begin{align*}
H(\widetilde{\theta_{u,n}}|\mu_n)
&= \int _{\R^{d\times n}}  \log \Big( \dfrac{d\widetilde{\theta_{u,n}}}{d\mu_n} \Big) \; d\widetilde{\theta_{u,n}}\\
&= \int _{\R^{d\times n}}  \log \Big( \dfrac{q_{u,n}}{m(u,d)} \Big) \; \dfrac{q_{u,n}}{m(u,d)}d\mu_n \\
&= \int _{\R^{d\times n}}  \log \Bigg( \dfrac{1}{m(u,d)} \int_{\Delta_2  } p_{\sigma ^2}^d \Big( n\sum_{j=1}^n  \alpha_j (x_j-x_{j-1})\;-u \Big)  dsdt  \Bigg) \times \\
& \phantom{= \;} \dfrac{1}{m(u,d)} \int_{\Delta_2  } p_{\sigma ^2}^d \Big( n\sum_{j=1}^n  \alpha_j (x_j-x_{j-1})\;-u \Big)  dsdt  \; \prod _{j=1}^n p^d_{ \frac{1}{n} }(x_j-x_{j-1}) dx_1\cdots dx_n\,.
\end{align*}
Now we use Jensen inequality for the convex function $x\mapsto x\log x$: 
\begin{align*}
H(\widetilde{\theta_{u,n}}|\mu_n)
&= \int _{\R^{d\times n}}  \log \Bigg( \dfrac{1}{2m(u,d)} \int_{\Delta_2  } p_{\sigma ^2}^d \Big( n\sum_{j=1}^n  \alpha_j (x_j-x_{j-1})\;-u \Big)  2dsdt  \Bigg) \times \\
& \phantom{= \;} \dfrac{1}{2m(u,d)} \int_{\Delta_2  } p_{\sigma ^2}^d \Big( n\sum_{j=1}^n  \alpha_j (x_j-x_{j-1})\;-u \Big)  2dsdt  \; \prod _{j=1}^n p^d_{ \frac{1}{n} }(x_j-x_{j-1}) dx_1\cdots dx_n\\
&\leqslant \int _{\R^{d\times n}}  \int_{\Delta_2  } \log \Bigg( \dfrac{1}{2m(u,d)} p_{\sigma ^2}^d \Big( n\sum_{j=1}^n  \alpha_j (x_j-x_{j-1})\;-u \Big)  \Bigg) \times \\
& \phantom{= \;} \dfrac{1}{2m(u,d)} p_{\sigma ^2}^d \Big( n\sum_{j=1}^n  \alpha_j (x_j-x_{j-1})\;-u \Big)  2dsdt  \; \prod _{j=1}^n p^d_{ \frac{1}{n} }(x_j-x_{j-1}) dx_1\cdots dx_n\,.
\end{align*}
Using the estimate  
$$ \log \Bigg( \dfrac{1}{2m(u,d)} p_{\sigma ^2}^d \Big( n\sum_{j=1}^n  \alpha_j (x_j-x_{j-1})\;-u \Big)  \Bigg) \leqslant -\log \Big(  2m(u,d) \big( 2\pi \big)^{d/2} \Big) -\dfrac{d}{2} \log \sigma ^2\,, $$
we deduce, using lemma \eqref{lem60}, 
\begin{align*}
H(\widetilde{\theta_{u,n}}|\mu_n)
&\leqslant \int _{\R^{d\times n}}  \int_{\Delta_2  } \log \Bigg( \dfrac{1}{2m(u,d)} p_{\sigma ^2}^d \Big( n\sum_{j=1}^n  \alpha_j (x_j-x_{j-1})\;-u \Big)  \Bigg) \times \\
& \phantom{= \;} \dfrac{1}{2m(u,d)} p_{\sigma ^2}^d \Big( n\sum_{j=1}^n  \alpha_j (x_j-x_{j-1})\;-u \Big)  2dsdt  \; \prod _{j=1}^n p^d_{ \frac{1}{n} }(x_j-x_{j-1}) dx_1\cdots dx_n\\
&\leqslant -\int _{\R^{d\times n}}  \int_{\Delta_2  } \Bigg( \log \Big(  2m(u,d) \big( 2\pi \big)^{d/2} \Big) +\dfrac{d}{2} \log \sigma ^2 \Bigg) \times \\
& \phantom{= \;} \dfrac{1}{2m(u,d)} p_{\sigma ^2}^d \Big( n\sum_{j=1}^n  \alpha_j (x_j-x_{j-1})\;-u \Big)  2dsdt  \; \prod _{j=1}^n p^d_{ \frac{1}{n} }(x_j-x_{j-1}) dx_1\cdots dx_n\\
&\leqslant -\int_{\Delta_2  } \Bigg( \log \Big(  2m(u,d) \big( 2\pi \big)^{d/2} \Big) +\dfrac{d}{2} \log \sigma ^2 \Bigg) \times \\
& \phantom{= \;} \dfrac{1}{m(u,d)} \int _{\R^{d\times n}}  p_{\sigma ^2}^d \Big( n\sum_{j=1}^n  \alpha_j (x_j-x_{j-1})\;-u \Big)  \prod _{j=1}^n p^d_{ \frac{1}{n} }(x_j-x_{j-1}) dx_1\cdots dx_n \; dsdt  \\
&\leqslant -\int_{\Delta_2  } \Bigg( \log \Big(  2m(u,d) \big( 2\pi \big)^{d/2} \Big) +\dfrac{d}{2} \log \sigma ^2 \Bigg) p_{t-s}^d ( u) ds dt\\
&\leqslant - \log \Big(  2m(u,d) \big( 2\pi \big)^{d/2} \Big) -\dfrac{d}{2 m(u,d)} \int_{\Delta_2  } \log (\sigma ^2) p_{t-s}^d ( u)ds dt.
\end{align*}
Lemma is proved.
\end{proof}

\noindent Combining previous results we obtain the following proposition.

\begin{prop} The quadratic Wasserstein distance between $\widetilde{\theta_{u,n}}$ and $\mu _n$ satisfies the estimate
$$ \mathcal{W}_2^2 \Big( \widetilde{\theta_{u,n}},\mu _n \Big)\leqslant \dfrac{2}{2n\Big( 1-\cos \dfrac{\pi}{2n+1}\,\Big)}  \Bigg( - \log\Big( 2m(u,d) (2\pi)^{d/2}\Big)   -\dfrac{d}{2m(u,d)} \int_{\Delta_2  } \log \big(\sigma ^2 \big) p_{t-s}^d ( u) dsdt \Bigg).  $$
\end{prop}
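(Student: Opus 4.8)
The plan is to recognize the statement as a direct consequence of the Talagrand transport inequality of Blower, combined with the two lemmas that precede it: the one computing the Hessian of the potential $V$ of $\mu_n$ and the one bounding the relative entropy $H(\widetilde{\theta_{u,n}}|\mu_n)$. All the quantitative work has already been done, so the only tasks are to check that Blower's hypotheses genuinely apply and then to chain the two estimates.

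First I would verify the hypotheses. The reference measure is $\mu_n$, which has density $e^{-V}$ with respect to Lebesgue measure on $\R^{d\times n}$; the potential $V$ is smooth, and by the lemma computing $\mathrm{Hess}(V)=A\otimes I_d$ its smallest eigenvalue equals $\kappa_n=\lambda_0=2n\big(1-\cos\frac{\pi}{2n+1}\big)>0$, so that $\mathrm{Hess}(V)\geqslant \kappa_n I_{n\times d}$. The other measure $\widetilde{\theta_{u,n}}$ is a genuine probability measure on $\R^{d\times n}$ (the normalization lemma gives total mass $m(u,d)$ for $\theta_{u,n}$), it is absolutely continuous with respect to $\mu_n$ with density $q_{u,n}/m(u,d)$ (the density lemma), and it has finite second moment, inherited from the Fernique-type bound $\int e^{\alpha\|x\|^2}\,\theta_u(dx)<\infty$ via the projection $\omega\mapsto(\omega(t_1),\dots,\omega(t_n))$. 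Hence $\nu_1=\widetilde{\theta_{u,n}}$ and $\nu_2=\mu_n$ meet the assumptions of Blower's theorem with $\kappa=\kappa_n$.

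Then I would apply the inequality to obtain
$$\mathcal{W}_2^2\big(\widetilde{\theta_{u,n}},\mu_n\big)\leqslant \frac{2}{\kappa_n}\,H\big(\widetilde{\theta_{u,n}}\,\big|\,\mu_n\big)=\frac{2}{2n\big(1-\cos\frac{\pi}{2n+1}\big)}\,H\big(\widetilde{\theta_{u,n}}\,\big|\,\mu_n\big),$$
and finally substitute the upper bound for $H\big(\widetilde{\theta_{u,n}}\,\big|\,\mu_n\big)$ supplied by the relative-entropy lemma, namely $-\log\!\big(2m(u,d)(2\pi)^{d/2}\big)-\frac{d}{2m(u,d)}\int_{\Delta_2}\log(\sigma^2)\,p^d_{t-s}(u)\,ds\,dt$. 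This reproduces exactly the asserted estimate.

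Since every ingredient is already established, there is no real analytic obstacle; the one point deserving care is that Blower's cited statement requires $V\geqslant 0$, whereas here $V=\tfrac{nd}{2}\log(2\pi/n)+\tfrac{n}{2}\sum_{j}\|x_j-x_{j-1}\|^2$ can dip below zero near the origin for $n$ large. I would remark that only the uniform convexity $\mathrm{Hess}(V)\geqslant\kappa_n I$ enters the proof of the transport inequality (via the Bakry--Émery mechanism), so this sign normalization is immaterial for the bound we need.
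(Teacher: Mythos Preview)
Your proposal is correct and follows exactly the paper's approach: the proposition is stated there as an immediate combination of Blower's Talagrand inequality with the Hessian lemma (giving $\kappa_n$) and the relative-entropy lemma (bounding $H(\widetilde{\theta_{u,n}}|\mu_n)$). Your added verification of the hypotheses and the remark on the inessential sign condition $V\geqslant 0$ are welcome elaborations but do not depart from the paper's argument.
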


\end{document}